\documentclass[Threeside,11pt]{article}
\usepackage [latin1]{inputenc}
\usepackage[english]{babel}
\usepackage{amsmath}
\usepackage{amsthm}
\usepackage{algorithm}
\usepackage{cite}
\usepackage{amsfonts}
\usepackage{amssymb}
\usepackage{mathrsfs}
\usepackage{graphicx}
\usepackage{colortbl,dcolumn}
\usepackage{marvosym}
\usepackage{ifsym}
\usepackage{paralist}
\usepackage{xcolor}
\usepackage{bm}
\usepackage[
colorlinks,
citecolor=blue,
linkcolor=blue,
backref=page
allcolors=black
]{hyperref} 
\allowdisplaybreaks

\newtheorem{lemma}{\bf Lemma}[section]
\newtheorem{theorem}{\bf Theorem}[section]
\newtheorem{proposition}{\bf Proposition}[section]
\newtheorem{corollary}{\bf Corollary}[section]
\newtheorem{definition}{\bf Definition}[section]
\newtheorem{remark}{\bf Remark}[section]

\newtheorem{assumption}{\bf Assumption}
\numberwithin{equation}{section}

\begin{document}
	
	\title{{\sl  Non-equilibrium fluctuations of a two-species exclusion process with slow boundary }}
\author{Ziying Chen $^{\mathcal{z},1}$, Yong Li $^{*,\mathcal{x},1,2}$ }

\renewcommand{\thefootnote}{}
\footnotetext{\hspace*{-6mm}
	
	\begin{tabular}{l l}
		$^{*}$~~~The corresponding author.\\
		$^\mathcal{z}$~~~E-mail address : ziying24@mails.jlu.edu.cn\\
		$^{\mathcal{x}}$~~~E-mail address : liyong@jlu.edu.cn\\
		$^{1}$~~~School of Mathematics, Jilin University, Changchun 130012, People's Republic of China.\\
		$^{2}$~~~School of Mathematics and Statistics, Center for Mathematics and Interdisciplinary Sciences,\\
		~~~ Northeast Normal University, Changchun 130024, People's Republic of China.
\end{tabular}}

\date{}
\maketitle

\begin{abstract}
 
		We study the non-equilibrium density fluctuations of a one-dimensional two-species symmetric simple exclusion process in contact with critical slow boundary reservoirs. The boundary rates are of order $1/n$, which leads to Robin boundary conditions at the macroscopic level.
	    We prove that the coupled fluctuation field associated with the empirical densities of two species converges to a generalized Ornstein-Uhlenbeck process. The limiting process is characterized by a linear martingale problem whose coefficients reflect the combined effects of bulk diffusion, species conversion, and the boundary reservoirs.\\
	\\
	{\bf Keywords:} {Multi-species, Reaction-Diffusion equation, Slowed boundary, Non-equilibrium fluctuations}\\
	{\bf2020 Mathematics Subject Classification:} {60K35,82C22}
\end{abstract}

\section{  Introduction }

Interacting particle systems provide fundamental models for studying microscopic stochastic dynamics in statistical physics and probability theory. 
A central and challenging problem in this field is to study density fluctuations around the hydrodynamic limit. 
For equilibrium systems, fluctuations are usually studied around an equilibrium state. 
However, many systems describing the transport of mass or energy are continuously driven by external forces, boundary reservoirs, or internal reaction mechanisms. 
Such mechanisms typically break detailed balance and drive the system far from equilibrium. 
In particular, when the boundary reservoirs impose different densities, the resulting density gradient induces a persistent macroscopic current and leads to a non-equilibrium steady state. 
The study of non-equilibrium density fluctuations is therefore an important problem linking microscopic interacting particle systems with macroscopic non-equilibrium phenomena; see, e.g., \cite{ref11,ref12,ref13}.

The symmetric simple exclusion process is a classical model for studying these questions. 
In this model, particles perform symmetric random jumps on a lattice subject to the exclusion rule, namely, each site can be occupied by at most one particle. 
Despite its simple definition, the exclusion process has become a paradigmatic model for a variety of collective phenomena, including mass transport, interface growth, and motion by mean curvature. For the one-species symmetric simple exclusion process, the hydrodynamic limit and density fluctuations on the torus or in closed systems have been extensively investigated in \cite{ref14,ref15,ref16,ref17}.

When the system is in contact with external reservoirs, the creation and annihilation mechanisms at the boundaries place it in an open environment, and may lead to a non-equilibrium stationary state. 
In such open boundary systems, the strength of the reservoirs further affects the boundary conditions appearing in the macroscopic limit. A common way to tune the interaction with the reservoirs is to introduce a parameter $\theta\in\mathbb R$, so that the injection and removal rates at the boundaries are scaled by $n^{-\theta}$. 
When $\theta<0$, the reservoirs act fast, whereas when $\theta\ge 0$, the reservoirs are slow. 
This phenomenon was systematically investigated in \cite{ref2} for the symmetric simple exclusion process with slow boundaries, where it was shown that different choices of the boundary scaling parameter lead to Dirichlet, Robin, or Neumann boundary conditions at the macroscopic level. Subsequently, \cite{ref3,ref18} studied the non-equilibrium and stationary fluctuations of the one-species symmetric exclusion process in contact with slowed reservoirs. Further results on open boundary exclusion processes and their fluctuations can be found in \cite{ref19,ref20,ref21}.

However, many physical systems are intrinsically multi-component. 
For instance, in multi-component diffusion, biological transport, and surface reaction systems, different types of particles may not only move in space but also interact through conversion, competition, or cooperation mechanisms. 
Therefore, open multi-species particle systems provide natural microscopic models for studying the joint effects of multi-component transport, bulk reaction mechanisms, and boundary driving, rather than being merely formal extensions of one-species models. 
In such systems, inter-species interactions together with exclusion constraints may give rise, at the macroscopic level, to coupled diffusion equations or reaction-diffusion systems, while in the open-boundary setting the boundary reservoirs further affect the macroscopic boundary conditions and the corresponding fluctuation structure.

In recent years, hydrodynamic limits and fluctuation problems for multi-species interacting particle systems have been studied in various settings \cite{ref22,ref25,ref26,ref27}. 
Hydrodynamic limits and the corresponding macroscopic boundary conditions for multi-species reaction-diffusion models with open boundaries have also been investigated \cite{ref23,ref24}. However, for open non-equilibrium systems, the hydrodynamic limit only describes the deterministic evolution of the macroscopic density and is not sufficient to capture the random fluctuations induced by boundary driving. 
To understand how boundary driving and bulk species conversion jointly affect the limiting noise and covariance structure under exclusion constraints, it is necessary to study the corresponding density fluctuation problem. 
To the best of our knowledge, non-equilibrium fluctuations for multi-species symmetric exclusion processes with slow boundary reservoirs and bulk species conversion have not yet been systematically studied.

To address this problem, we consider a two-species symmetric simple exclusion process on a finite one-dimensional discrete interval in contact with slow boundary reservoirs. 
In this model, each site can accommodate at most one particle, and particles may belong to one of two species. 
The bulk dynamics consist of symmetric exchanges and species-conversion mechanisms, while at the boundaries the system is coupled to two external reservoirs through particle injection and removal. We focus on the critical slow-boundary regime, in which the reservoir rates are of order (1/n). 
At the macroscopic level, this boundary mechanism corresponds to Robin-type boundary conditions.

By constructing suitable Dynkin martingales, we prove that, in the critical slow-boundary regime, the coupled fluctuation field associated with two particle densities converges to a generalized Ornstein-Uhlenbeck process.  
The noise term has a composite structure: it contains both conservative fluctuations arising from the bulk exchange dynamics and non-conservative fluctuations generated by species conversion and by the random injection and removal of particles at the reservoirs. This result extends the existing fluctuation theory for one-species slow boundary models and provides a tractable mathematical framework for studying non-equilibrium fluctuations in more general multi-component open systems.

The main difficulties of this work arise from the coupling between two-species exclusion structure and the slow boundary mechanism. 
Since two species share the same exclusion constraint, the injection and removal mechanisms at the boundaries no longer have the scalar structure of the one-species case \cite{ref3}, but instead lead to coupled boundary dynamics. 
As a result, the treatment of boundary contributions and the identification of the Robin terms in the fluctuation limit become more delicate.
On the other hand, two-point correlation function is no longer scalar, but becomes a coupled system consisting of four correlation components.
In particular, the effective operator induced by the boundary generator no longer has a non-negative jump-rate structure on the off-diagonal components, and hence cannot be directly interpreted as the generator of a random walk. Therefore, the estimates based on diagonal occupation times of an auxiliary random walk, which are used in the one-species model, cannot be applied directly. New correlation estimates adapted to the coupled two-species system are required. 
Although the bulk species-conversion mechanism does not change the leading scaling structure, it  introduces additional coupling terms, which complicate the martingale decomposition and the computation of quadratic variations.

The paper is organized as follows. In Section 2, we give the precise definition of the two-species reaction-diffusion model with slow boundaries and state the main results, including the non-equilibrium fluctuation result in Theorem 2.2 and the characterization of the limiting Ornstein-Uhlenbeck process in Theorem 2.3. Section 3 is devoted to the basic properties of the limiting operator and its associated semigroup. In Section 4, we establish the replacement lemma needed for the proof of the main theorem. Sections 5 and 6 are devoted to the proofs of Theorems 2.2 and 2.3, respectively. Section 7 treats the tightness of the fluctuation fields. Finally, Section 8 collects several auxiliary estimates used throughout the proof, in particular the estimates on two-point correlation functions.

\section{Model and statement of results }

\subsection{Model description}

Let $ \Sigma_n=\{1,\cdots,n-1\} $, where \(n\in\mathbb N\) is the spatial scaling parameter. We consider a two-species exclusion process on \(\Sigma_n\). Each site \(x\in\Sigma_n\) can be occupied by at most one particle: it is either vacant, occupied by a particle of species \(A\), or occupied by a particle of species \(B\). We label the vacant state by \(0\), species \(A\) by \(1\), and species \(B\) by \(2\).
At each site \(x\in\Sigma_n\), we associate the occupation vector $ \bm{\eta}^x  = \left(  \eta_{0}^x ,\eta_{1}^x  ,\eta_{2}^x  \right)  $, where \(\eta_k^x \) denotes the occupation variable of state \(k\in\{0,1,2\}\) at site \(x\). Let $  \left( \bm{\eta}^{n}(t); t \ge 0 \right) $ be a Markov chain with state space
\begin{align*}
	\Omega_{n} = \left\lbrace \bm{\eta}: ~\bm \eta^{x} \in \left\lbrace 0,1 \right\rbrace ^{3}, ~\sum_{k=0}^{2} \eta_{k}^{x} = 1 ~\text{for all} ~ x \in \Sigma_n \right\rbrace.
\end{align*}

We now define the generator of the Markov process. For every function
\(f:\Omega_n\to\mathbb R\), the generator consists of four parts: the conservative exchange dynamics, the species conversion dynamics, and the left and right boundary dynamics.

The conservative exchange dynamics is defined by
\[
L_n^{s} f(\bm \eta) = \sum_{x=1}^{n-2} \sum_{k,l=0}^{2} \eta_k^x \eta_l^{x+1}  
\left[ f(\bm \eta -\delta_{k}^{x} + \delta_{l}^{x} + \delta_{k}^{x+1} - \delta_{l}^{x+1}) - f(\bm \eta) \right].
\]
It is interpreted as the exchange of label $ k $ at site $ x $ and label $l $ at site  $x+1$ with rate $ \eta_k^x \eta_l^{x+1} $ for every $ k,l \in \{ 0,1,2 \} $ and $ x\in \{ 1,\cdots,n-2 \}$.

The species conversion part acts only in the bulk and is defined by
\[
L_n^{c} f(\bm \eta) = \sum_{x=2}^{n-2} \sum_{k,l=1}^2 \gamma_k  \eta_k^x
\left[ f(\bm \eta -\delta_{k}^{x} + \delta_{l}^{x}) - f(\bm \eta) \right].
\] 
In other words, in the bulk region \(x\in \{2,\cdots,n-2\}\), a particle of species \(A\) is converted into a particle of species \(B\) at rate \(\gamma_1\), while a particle of species \(B\) is converted into a particle of species \(A\) at rate \(\gamma_2\).

The left boundary dynamics is defined by 
\[
L_n^{-} f(\bm \eta) = \sum_{k=1}^2 
\left(
\frac{\alpha_k}{n } \eta_0^1 \left[ f(\bm \eta - \delta_0^1 + \delta_k^1) - f(\bm \eta) \right]
+
\frac{ \alpha_0}{n } \eta_k^1 \left[ f(\bm \eta - \delta_k^1 + \delta_0^1) - f(\bm \eta) \right]
\right).
\]
It is interpreted as follows: at the left boundary site \(x=1\), a vacant site creates a particle of species \(k\in\{1,2\}\) at rate \(\alpha_k/n\), while a particle of either species is removed at rate \(\alpha_0/n\).

Similarly, the right boundary dynamics is defined by
\[
L_n^{+} f(\bm \eta) = \sum_{k=1}^2 
\left(
\frac{\beta_k}{n } \eta_0^{n-1} \left[ f(\bm \eta - \delta_0^{n-1} + \delta_k^{n-1}) - f(\bm \eta) \right]
+
\frac{ \beta_0}{n } \eta_k^{n-1} \left[ f(\bm \eta - \delta_k^{n-1} + \delta_0^{n-1} ) - f(\bm \eta) \right]
\right).
\]
Thus, at the right boundary site \(x=n-1\), a vacant site creates a particle of species \(k\in\{1,2\}\) at rate \(\beta_k/n\), while a particle of either species is removed at rate \(\beta_0/n\).

We restrict species conversion to the bulk sites in order to keep the boundary reservoirs as the sole mechanism acting at the endpoints. While extending the conversion dynamics to the two boundary sites would be a possible variant of the model, we do not include it here. This convention makes the connection between the reservoir dynamics and the resulting Robin boundary conditions fully transparent.

Throughout the paper, we assume that $ \gamma_1,\gamma_2 >0 $ and 
\[
\alpha_1,\alpha_2,\beta_1,\beta_2>0,\qquad
\alpha_1+\alpha_2<1,\qquad
\beta_1+\beta_2<1.
\]
Set $ \alpha_0 = 1- \alpha_1 - \alpha_2 $ and $ \beta_0 = 1- \beta_1 - \beta_2 $.
Then $  0< \alpha_0 <1  $ and $ 0< \beta_0 <1 $. 
For \(x,y\in\Sigma_n\) and \(k,l\in\{0,1,2\}\), the vector \(\delta_k^x\) is defined by
\begin{align*}
	\left ( \delta_{k}^{x}  \right )_{l}^{y}=\begin{cases}
		1, & \text{ if } ~ l=k ~\text{and} ~ y=x,  \\
		0, & \text{ otherwise}.
	\end{cases} 
\end{align*}

We define the infinitesimal generator of the Markov process as follows
\[
L_n = n^2 L_n^{s} + L_n^{c} + n^2 L_n^{-} + n^2 L_n^{+}.
\]

For a probability vector $ \bm p=(p_0,p_1,p_2)\in[0,1]^3 $ satisfying $ \sum_{i=0}^{2} p_{i} =1 $, we define the product measure \(\nu_{\bm p}^n\) on \(\Omega_n\) by 
\begin{align*}
	\nu _{\bm p}^{n} \left ( \bm{\eta} \right ) := \prod _{x =1}^{n-1} \prod_{i=0}^{2}  p_{i}^{\eta^{x}_{i}}.
\end{align*}
The measure  $  \nu _{\bm p}^{n} $ satisfies the detailed balance condition with respect to the conservative exchange dynamics $ L^s $ for every \(\bm p\). 
Moreover, if the two reservoirs impose the same density vector and the bulk conversion rates satisfy the detailed balance condition, namely
\[
\alpha_i =  \beta_i =  p_i,~ ~i=1,2
\qquad\text{and}\qquad
\gamma_1p_1=\gamma_2p_2,
\]
then \(\nu_{\bm p}^n\) is invariant for the full dynamics. In general, when the boundary reservoirs are different, the system is driven out of equilibrium. In that case, the invariant measure, denoted by \(\mu_{\mathrm{ss}}^n\), is no longer a product measure.

Before presenting the main results, we introduce some notation.
Let $ \mathrm I_m $ denote the $ m \times m $ identity matrix, and let $ M^{\top} $ denote the transpose of the matrix $ M $.
Let \(\mathcal C^\infty([0,1];\mathbb R)\) denote the space of real-valued smooth functions on \([0,1]\). For \(1\le p<\infty\), we denote by \(\mathcal L^p([0,1])\) the usual Lebesgue space. 
Let $ \mathcal D\left([0,T],\Omega_n\right) $ be the space of c\`adl\`ag trajectories from \([0,T]\) to \(\Omega_n\), endowed with the \(J_1\)-Skorokhod topology.
We denote by $ \mathbb{P} ^{n} $ the law of the process $ \left( \bm{\eta}^n \left( t\right) ; t \ge 0\right)  $ on the space $ \mathcal{D}\left ( [ 0,T ] , \Omega_{n} \right )  $, and by $ \mathbb{E}^{n} $ the corresponding expectation.
When the process $ \left( \bm{\eta}^n \left( t\right) ; t \ge 0\right)  $ starts from the initial law $ \mu^{n} $, its law and expectation are denoted respectively by $ \mathbb{P}_{\mu^{n}} ^{n} $ and $ \mathbb{E}_{\mu^{n}} ^{n} $.
Throughout this paper, we shall denote by $ C $ a finite positive constant, which may change from line to line.

\subsection{Hydrodynamic limit}

For each species \(k\in\{1,2\}\) and each test function 
\(f_k\in \mathcal C^\infty([0,1];\mathbb R)\), we define the empirical density field by
\begin{align*}
	X_{k}^{n,t}\left( \cdot\right) : ~&\mathcal{C}^{\infty }\left( [0,1];\mathbb{R}\right)  \to \mathbb{R} \notag \\
	& f_{k} \to \frac{1}{n} \sum_{x =1}^{n-1} \eta_{k}^{x}\left( t\right) f_{k}\left( \frac{x}{n}\right).
\end{align*}
 We first give the assumption on the behavior of the density field at the initial time.
\begin{definition}\label{d2.1}
	For $ k \in \{ 1,2 \} $, let $ \rho_k^0 :[0,1]\to[0,1] $ be a measurable density profile.  
	A sequence of probability measures \(\{\mu^n\}_{n\in\mathbb N}\) on \(\Omega_n\) is said to be associated with the initial macroscopic profile \(\bm\rho_0 = (\rho_1^0,\rho_2^0)^\top \) if, for every \(k\in\{1,2\}\), every \(\delta>0\), and every  \( f_k\in\mathcal C^\infty([0,1];\mathbb R) \),
	\[
	\lim_{n\to\infty}
	\mu^n\left(
	\left|
	X_k^{n,0}(f_k)
	-
	\int_0^1 f_k(u)\rho_k^0(u)\,\mathrm du
	\right|>\delta
	\right)
	=0.
	\]
\end{definition}

Fix $ T > 0 $. The hydrodynamic limit stated below is not the main focus of the present paper.
It can be obtained by adapting the standard entropy method, replacement lemmas and energy estimates for exclusion processes with slow boundaries, as in \cite{ref2,ref24}.  

\begin{theorem}[Hydrodynamic limit]\label{th2.1}
	Let \(\{\mu^n\}_{n\in\mathbb N}\) be a sequence of probability measures on \(\Omega_n\) associated with \(\bm\rho_0\). 
	Then, for every \(t\in[0,T]\), every \(\delta>0\), every \(k\in\{1,2\}\), and every \(f_k\in C^\infty([0,1];\mathbb R) \),
	\[
	\lim_{n\to\infty}
	\mathbb P_{\mu^n}^n
	\left(
	\left|
	X_k^{n,t}(f_k)
	-
	\int_0^1 f_k(u)\rho_k(t,u)\,du
	\right|>\delta
	\right)
	=0,
	\]
	where $ \bm\rho(t,u)
	=
	\begin{pmatrix}
		\rho_1(t,u)\\
		\rho_2(t,u)
	\end{pmatrix} $
    is the unique weak solution of the reaction-diffusion system with Robin boundary conditions
	\begin{equation}\label{eq2.1}
		\begin{cases}
			\partial_t\bm\rho(t,u)
			=
			\Delta\bm\rho(t,u)+M\bm\rho(t,u),
			& t>0,\ u\in(0,1),\\[0.2cm]
			\partial_u\bm\rho(t,0)
			=
			K_L\bm\rho(t,0)-\bm\alpha,
			& t>0,\\[0.2cm]
			\partial_u\bm\rho(t,1)
			=
			\bm\beta-K_R\bm\rho(t,1),
			& t>0,\\[0.2cm]
			\bm\rho(0,u)=\bm\rho_0(u),
			& u\in[0,1].
		\end{cases}
	\end{equation}
	Here $ \bm \alpha = \left( \alpha_1,\alpha_2 \right)^{\top} $, $ \bm \beta = \left( \beta_1,\beta_2 \right)^{\top} $, and
	\[
	M=
	\begin{pmatrix}
		-\gamma_1 & \gamma_2\\
		\gamma_1 & -\gamma_2
	\end{pmatrix},
	\qquad
	K_L=
	\begin{pmatrix}
		1-\alpha_2 & \alpha_1\\
		\alpha_2 & 1-\alpha_1
	\end{pmatrix},
	\qquad
	K_R=
	\begin{pmatrix}
		1-\beta_2 & \beta_1\\
		\beta_2 & 1-\beta_1
	\end{pmatrix}.
	\]
\end{theorem}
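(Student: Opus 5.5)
We would follow the entropy method of Guo--Papanicolaou--Varadhan as adapted to slow boundaries in \cite{ref2}, in the order tightness, characterization of limit points, uniqueness. First, consider the empirical measures $\pi^{n,t}_k(du)=\frac1n\sum_x\eta_k^x(t)\delta_{x/n}(du)$, $k=1,2$. We would prove tightness of their laws in $\mathcal D([0,T],\mathcal M_+\times\mathcal M_+)$ via Aldous' criterion. By Dynkin's formula,
\[
X_k^{n,t}(f_k)-X_k^{n,0}(f_k)-\int_0^t L_nX_k^{n,s}(f_k)\,ds
\]
is a martingale $\mathcal M^{n,t}_k(f_k)$. Its quadratic variation is $O(1/n)$: the bulk exchanges contribute $n^2\cdot n\cdot n^{-2}\cdot n^{-2}$, the conversions $n\cdot n^{-2}$, and the boundary $n^2\cdot n^{-1}\cdot n^{-2}$. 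It therefore vanishes in $L^2$ by Doob's inequality. The integral term is uniformly bounded, which gives tightness.

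Next, we would compute $L_nX_k^{n}(f_k)$ explicitly. Summation by parts of the exchange part produces
\[
\frac1n\sum_x \eta_k^x\Delta_nf_k(x/n),
\]
plus the boundary terms $\eta_k^1\,\nabla_n^+f_k(0)$ and $-\eta_k^{n-1}\,\nabla_n^-f_k(1)$. The conversion part gives $\frac1n\sum_x(M\bm\eta^x)_kf_k(x/n)$, with an $O(1/n)$ error from excluding $x=1,n-1$. The left boundary contributes
\[
f_k(1/n)\bigl(\alpha_k\eta_0^1-\alpha_0\eta_k^1\bigr)
=f_k(1/n)\bigl(\alpha_k-\alpha_k(\eta_1^1+\eta_2^1)-\alpha_0\eta_k^1\bigr).
\]
Combining with the $-\eta_k^1 f_k'(0)$ coming from summation by parts, the right-hand side is linear in $\eta_1^1,\eta_2^1$. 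Since $\alpha_0+\alpha_1=1-\alpha_2$, the coefficients are exactly $-(K_L\bm\eta^1)_k+\alpha_k$. The right boundary works symmetrically with $K_R$ and $\bm\beta$. The key point is that every term is linear in the occupation variables. No replacement lemma is needed in the bulk, unlike for nonlinear models. The only nontrivial step is replacing the boundary values $\eta_k^1$ and $\eta_k^{n-1}$ by averages $\frac{1}{\epsilon n}\sum_{x=1}^{\epsilon n}\eta_k^x$, which are then expressed through $\pi^{n}_k$ tested against approximations of $\delta_0$.

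This boundary replacement is the main obstacle. We would prove it by the entropy inequality together with a Dirichlet form estimate with respect to a reference product measure $\nu_{\bm p(\cdot)}^n$ with a smooth profile interpolating between $(\alpha_0,\alpha_1,\alpha_2)$ and $(\beta_0,\beta_1,\beta_2)$. Relative entropy is $O(n)$. The error from the boundary and conversion parts of $\langle L_n\sqrt f,\sqrt f\rangle$ under a non-invariant reference measure is $O(n)$: the boundary terms have rate $n$ and the conversion terms are $O(n)$. The profile's gradient costs $O(n)$ as well. The bulk exchange Dirichlet form is then controlled at order $n^2$. We would write $\eta_k^1-\eta_k^x$ as a telescoping sum of differences $\eta_k^{y}-\eta_k^{y+1}$ and change variables along the exchange of labels $k$ and $l$ at $y,y+1$. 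Young's inequality then bounds the error by $C\epsilon+C/B$ after choosing $B$. This is the two-species analogue of the replacement lemma of \cite{ref2}. The exclusion and the three labels do not break it, because the exchange dynamics is reversible for every product $\nu_{\bm p}^n$.

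Finally, we would identify the limit and prove uniqueness. Because the particle counts are bounded by one, limit points are absolutely continuous, $\pi_k(t,du)=\rho_k(t,u)du$ with $0\le\rho_k\le1$. An energy estimate, again by the entropy method with a variational bound on $\int_0^T\sum_x(\eta_k^{x+1}-\eta_k^x)G(s,x/n)\,ds$, gives $\rho_k\in L^2(0,T;H^1)$. Boundary traces are therefore defined, and the averages over $[0,\epsilon]$ converge to $\rho_k(t,0)$. Letting $n\to\infty$ and then $\epsilon\to0$ yields the weak formulation
\[
\langle\bm\rho_t,\bm f\rangle-\langle\bm\rho_0,\bm f\rangle=\int_0^t\Big[\langle\bm\rho_s,\bm f''+M^\top\bm f\rangle+\bm f(1)^\top(\bm\beta-K_R\bm\rho_s(1))-\bm f'(1)^\top\bm\rho_s(1)-\bm f(0)^\top(K_L\bm\rho_s(0)-\bm\alpha)+\bm f'(0)^\top\bm\rho_s(0)\Big]ds ,
\]
which is the weak form of \eqref{eq2.1}. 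Uniqueness of weak solutions in $L^2(0,T;H^1)$ follows from a standard energy argument for the linear system. We take the difference $\bm w$ of two solutions and test against $\bm w$ itself, justified by mollification in time. The boundary terms contribute $-\bm w(0)^\top K_L\bm w(0)-\bm w(1)^\top K_R\bm w(1)$, and the $M$-term is bounded by $C\|\bm w\|^2$. Bounding the indefinite boundary quadratic forms by $\delta\|\bm w'\|^2+C_\delta\|\bm w\|^2$ via the trace inequality, Gronwall gives $\bm w=0$. Hence the whole sequence converges, and convergence in probability at each fixed $t$ follows since the limit is deterministic and continuous in time.
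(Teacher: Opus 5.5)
Your proposal is correct and follows the route the paper indicates: the paper gives no proof beyond pointing to the entropy method, slow-boundary replacement lemma and energy estimate of \cite{ref2,ref24}, and your boundary replacement is the same argument as the paper's Lemmas~\ref{l4.1}--\ref{l4.2}, with the linearity of the bulk dynamics correctly removing any need for a bulk replacement. One small slip: the summation-by-parts boundary term at the left is $+\eta_k^1 f_k'(0)$, not $-\eta_k^1 f_k'(0)$, which is what your own formula just before and your final weak formulation already use.
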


\subsection{  Density fluctuations }

In this subsection, we study the density fluctuations around the hydrodynamic limit. To this end, we first specify the space of test functions and some initial assumptions.

\subsubsection{Test function space and initial assumptions}

We first introduce the test function space adapted to the adjoint linearized hydrodynamic operator. 
Let
\begin{align*}
	\mathcal A^\dagger= \Delta \mathrm I_2 + M^\top.
\end{align*}
We define
\begin{equation}\label{eq2.2}
\mathscr S^\dagger
=
\left\{
\bm f\in \mathcal C^\infty([0,1];\mathbb R^2):
\begin{array}{l}
	\partial_u\left( (\mathcal A^\dagger)^m\bm f\right) (0)
	=
	K_L^\top\left( (\mathcal A^\dagger)^m\bm f \right) (0),\\[0.15cm]
	\partial_u \left( (\mathcal A^\dagger)^m\bm f\right) (1)
	=
	-
	K_R^\top \left( (\mathcal A^\dagger)^m\bm f\right) (1),
\end{array}
\quad
\forall\,m\in\mathbb N\cup\{0\}
\right\}.	 
\end{equation}
This is the natural space of smooth vector-valued test functions satisfying the adjoint Robin boundary conditions and all their compatibility conditions.

\begin{definition}
	For each $k\in\mathbb N\cup\{0\}$ and $\bm f\in \mathscr S^\dagger $, define the seminorm
	\begin{equation}\label{eq2.3}
		\|\bm f\|_k
		:=
		\sup_{u\in[0,1]}
		\bigl\|
		\partial_u^k \bm f(u)
		\bigr\| .
	\end{equation}
	We denote by $(\mathscr S^\dagger)'$ the topological dual of $\mathscr S^\dagger$ with respect to the topology generated by the seminorms. Thus \( (\mathscr S^\dagger)'\) is the space of all continuous linear functionals
	\[
	\bm Y:\mathscr S^\dagger \to\mathbb R.
	\]
\end{definition}

Fix an initial distribution \(\mu^n\) on \(\Omega_n\). For \(x\in\Sigma_n\) and \(t\ge0\), define
\[
\rho_k^{n,t}(x)
=
\mathbb E_{\mu^n}^n\left[\eta_k^x(t)\right],
\qquad
k=0,1,2.
\]
We extend these functions to the boundary by setting
\[
\rho_k^{n,t}(0)=\alpha_k,
\qquad
\rho_k^{n,t}(n)=\beta_k,
\qquad
k=0,1,2,
\]
where $ \alpha_0 = 1- \alpha_1 -\alpha_2 $ and $ \beta_0 = 1- \beta_1 -\beta_2 $. 
For $ k \in \{ 1,2 \}$, a direct computation from the generator gives 
\begin{equation}\label{eq2.4}
	\begin{cases}
		\partial_t\rho_k^{n,t}(x)
		=
		n^2
		\left[
		\rho_k^{n,t}(x+1)+\rho_k^{n,t}(x-1)-2\rho_k^{n,t}(x)
		\right]
		-\gamma_k\rho_k^{n,t}(x)
		+\gamma_{3-k}\rho_{3-k}^{n,t}(x),
		& x \in \Sigma_{n}^ \circ ,\\[0.2cm]
		\partial_t\rho_k^{n,t}(1)
		=
		n^2\left[
		\rho_k^{n,t}(2)-\rho_k^{n,t}(1)
		\right]
		+
		n\left[
		\alpha_k\rho_0^{n,t}(1)-\alpha_0\rho_k^{n,t}(1)
		\right],
		& t\ge0,\\[0.2cm]
		\partial_t\rho_k^{n,t}(n-1)
		=
		n^2\left[
		\rho_k^{n,t}(n-2)-\rho_k^{n,t}(n-1)
		\right]
		+
		n\left[
		\beta_k\rho_0^{n,t}(n-1)-\beta_0\rho_k^{n,t}(n-1)
		\right],
		& t\ge0,\\[0.2cm]
		\rho_k^{n,t}(0)=\alpha_k,
		\qquad
		\rho_k^{n,t}(n)=\beta_k,
		& t\ge0,
	\end{cases}
\end{equation}
 where $  \Sigma_{n}^ \circ:= \{ 2, \cdots, n-2 \} $.

In order to obtain the non-equilibrium fluctuations, we need the following initial assumptions.
\begin{assumption}\label{ass1}
	There exists a measurable profile
	\[
	\bm\rho_0(u)
	=
	\begin{pmatrix}
		\rho_1^0(u)\\
		\rho_2^0(u)
	\end{pmatrix},
	\qquad u\in[0,1],
	\]
	such that
	\[
	0\le \rho_1^0(u),\rho_2^0(u)\le1,
	\qquad
	\rho_1^0(u)+\rho_2^0(u)\le1.
	\]
	The sequence \( \{\mu^n\}_{n\in \mathbb{N}} \) is assumed to be associated with \(\bm \rho_0\) in the sense of Definition \ref{d2.1}.
\end{assumption}

\begin{assumption}\label{ass2}
	Let $ \bm\rho_0^{n}(x) = \begin{pmatrix}
	\rho_1^{n,0}(x)	\\
	\rho_2^{n,0}(x)
	\end{pmatrix}$. There exists a constant \(C_1>0\), independent of \(n\), such that
	\[
	\max_{x\in\Sigma_n}
	\left\|
	\bm\rho_0^{n}(x)
	-
	\bm\rho_0\left(\frac{x}{n}\right)
	\right\| 
	\le
	\frac{C_1}{n}.
	\]
\end{assumption}

\begin{assumption}\label{ass3}
	There exists a constant \(C_2>0\), independent of \(n\), such that
	\[
	\max_{1 \le x \le n-2}
	\left\|
	\bm\rho_0^{n}(x+1)
	-
    \bm\rho_0^{n}(x)
	\right\| 
	\le
	\frac{C_2}{n}.
	\]
\end{assumption}

\begin{assumption}\label{ass4}
	There exists a constant \(C_3>0\), independent of \(n\), such that for 
	\begin{align*}
	\varphi_{ij}^{n,0}(x,y)
	=
	\mathbb E_{\mu^n}^n\left[
	\eta_i^x(0)\eta_j^y(0)
	\right]
	-
	\rho_i^{n,0}(x)\rho_j^{n,0}(y),
	\qquad i,j \in \{1,2 \},	 
	\end{align*}
	it holds that
	\[
	\max_{1\le x<y\le n-1}
	\left\|
	\varphi_{ij}^{n,0}(x,y)
	\right\|
	\le
	\frac{C_3}{n}.
	\]
\end{assumption}

\subsubsection{ Non-equilibrium fluctuations }

 We now define the density fluctuation field. For a scalar test function
 \(h\in \mathcal{C}^\infty([0,1];\mathbb R)\), set
 \[
 Y_k^{n,t}(h)
 =
 \frac{1}{\sqrt n}
 \sum_{x=1}^{n-1}
 h\left(\frac{x}{n}\right)
 \left[
 \eta_k^x(t)-\rho_k^{n,t}(x)
 \right],
 \qquad
 k=1,2.
 \]
 Equivalently, for a vector-valued test function
 \[
 \bm f=(f_1,f_2)^\top\in\mathscr S^\dagger,
 \]
 we define
 \[
 \bm Y_t^n(\bm f)
 =
 \left\langle \bm Y_t^n,\bm f\right\rangle
 :=
 Y_1^{n,t}(f_1)+Y_2^{n,t}(f_2).
 \]
 That is,
 \begin{equation}\label{eq2.5}
 	\left\langle \bm Y_t^n,\bm f\right\rangle
 	=
 	\frac{1}{\sqrt n}
 	\sum_{x=1}^{n-1}
 	\left[
 	f_1\left(\frac{x}{n}\right)
 	\left(\eta_1^x(t)-\rho_1^{n,t}(x)\right)
 	+
 	f_2\left(\frac{x}{n}\right)
 	\left(\eta_2^x(t)-\rho_2^{n,t}(x)\right)
 	\right].	 
 \end{equation}
 
 For each \(n\ge1\), let \(\mathbb Q_n\) be the probability measure on $ \mathcal D([0,T],(\mathscr S^\dagger)')$ 
 given by the law of the process 
 $\{\bm Y_t^n:0\le t\le T\}$ under $\mathbb P_{\mu^n}^n$.
 
 For later use, we introduce the covariance bilinear form of the limiting noise.
 Assume that the hydrodynamic profile $ \bm \rho = \left( \rho_1, \rho_2 \right)^{\top} $ is the solution of \eqref{eq2.1}.
 For \(t\ge0\), let
 \[
 \rho_0(t,u)=1-\rho_1(t,u)-\rho_2(t,u),
 \qquad
 c(t,u)=\gamma_1\rho_1(t,u)+\gamma_2\rho_2(t,u).
 \]
 For \(\bm f=(f_1,f_2)^\top\) and \(\bm g=(g_1,g_2)^\top\), define
 \[
 Q_t(\bm f,\bm g)
 =
 Q_t^{s}(\bm f,\bm g)
 +
 Q_t^{c}(\bm f,\bm g)
 +
 Q_t^{bd}(\bm f,\bm g).
 \]
Here the conservative exchange part is
 \[
 Q_t^{s}(\bm f,\bm g)
 =
 2\int_0^1
 \partial_u\bm f(u)^\top
 \bm \chi(\bm\rho(t,u))
 \partial_u\bm g(u) \,\mathrm du,
 \]
 with
 \[
 \bm{\chi}(\bm\rho)
 =
 \begin{pmatrix}
 	\rho_1(1-\rho_1) & -\rho_1\rho_2\\
 	-\rho_1\rho_2 & \rho_2(1-\rho_2)
 \end{pmatrix}.
 \]
 The conversion part is
 \[
 Q_t^{c}(\bm f,\bm g)
 =
 \int_0^1
 c(t,u)
 \bigl(f_1(u)-f_2(u)\bigr)
 \bigl(g_1(u)-g_2(u)\bigr) \,\mathrm du.
 \]
 Finally, the boundary part is
 \[
 Q_t^{bd}(\bm f,\bm g)
 =
 \bm f(0)^\top D_L(t)\bm g(0)
 +
 \bm f(1)^\top D_R(t)\bm g(1),
 \]
 where
 \[
 D_L(t)
 =
 \begin{pmatrix}
 	\alpha_1\rho_0(t,0)+\alpha_0\rho_1(t,0) & 0\\
 	0 & \alpha_2\rho_0(t,0)+\alpha_0\rho_2(t,0)
 \end{pmatrix},
 \]
 and
 \[
 D_R(t)
 =
 \begin{pmatrix}
 	\beta_1\rho_0(t,1)+\beta_0\rho_1(t,1) & 0\\
 	0 & \beta_2\rho_0(t,1)+\beta_0\rho_2(t,1)
 \end{pmatrix}.
 \]
 
 \begin{theorem}[Non-equilibrium fluctuations]\label{th2.2}
 	Assume that the initial distributions
 	\(\{\mu^n\}_{n\in \mathbb{N}}\) satisfy Assumptions
 	\ref{ass1}--\ref{ass4}.
 	Then the sequence \(\{\mathbb Q_n\}_{n \in \mathbb{N} }\) is tight in
 	\( \mathcal D([0,T],(\mathscr S^\dagger)')\). Moreover, every limit point \(\mathbb Q\) is
 	concentrated on paths \(\{\bm Y_t:0\le t\le T\}\) satisfying the following
 	property.
 	
 	For every \(\bm f\in\mathscr S^\dagger\) and every \(t\in[0,T]\),
 	\[
 	\bm Y_t(\bm f)
 	=
 	\bm Y_0(S_t\bm f)
 	+
 	\mathcal W_t(\bm f),
 	\]
 	where \((S_t)_{t\ge0}\) is the semigroup generated by $ \mathcal A^\dagger =\Delta \mathrm I_2+M^\top $
    on \(\mathscr S^\dagger \), and $ \mathcal W_t(\bm f) $ is a mean zero Gaussian variable of variance
    \[
    \int_0^t
    Q_r\left(
    S_{t-r}\bm f,
    S_{t-r}\bm f
    \right)\,dr.
    \]
 	Moreover, $ \bm Y_0 $ and $ \mathcal W_t $ are uncorrelated in the sense that $ \mathbb{E}  \left[ \bm Y_0(\bm f)\mathcal W_t(\bm g)  \right] = 0 $ for all $ \bm f,\bm g \in \mathscr{S}^{\dagger } $.
 \end{theorem}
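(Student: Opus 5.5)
We follow the standard Holley--Stroock scheme for non-equilibrium fluctuations with slow boundaries, as in \cite{ref3}: tightness, identification of limit points through a martingale problem, and then the Gaussian representation via the semigroup. Fix $\bm f\in\mathscr S^\dagger$ and consider time-dependent test functions $\bm f_s=S_{t-s}\bm f$, $0\le s\le t$. Since $\mathscr S^\dagger$ is invariant under $S_t$, each $\bm f_s$ satisfies the adjoint Robin conditions. We write Dynkin's formula
\[
\mathcal M_t^n(\bm f)=\bm Y_t^n(\bm f_t)-\bm Y_0^n(\bm f_0)-\int_0^t(\partial_s+n^2L_n^s+L_n^c+n^2L_n^-+n^2L_n^+)\bm Y_s^n(\bm f_s)\,ds .
\]
The centering $\rho^{n,s}_k(x)$ is subtracted inside $\bm Y^n_s$, so the time derivative of the centering cancels the mean of the generator term, by \eqref{eq2.4}.

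\textbf{Step 1: the drift.} In the bulk, a summation by parts gives $\frac{1}{\sqrt n}\sum_x[\Delta_n\bm f_s(x/n)+M^\top\bm f_s(x/n)]\cdot(\bm\eta^x-\bm\rho^{n,s}(x))$, which is $\bm Y^n_s(\mathcal A^\dagger\bm f_s)$ up to $O(n^{-1/2})$ errors. The boundary terms collect at sites $1$ and $n-1$. They contain $n^2\cdot n^{-1/2}[\bm f_s(1/n)-\bm f_s(0)]$-type discrete gradients, and from the reservoirs the terms
\[
n\cdot n^{-1/2}\,\bm f_s(1/n)^\top\bigl(\bm\alpha(\eta_0^1-\rho_0)-\alpha_0(\bm\eta^1-\bm\rho)\bigr).
\]
Writing $\eta_0=1-\eta_1-\eta_2$ turns the reservoir term into $-K_L(\bm\eta^1-\bm\rho^{n,s}(1))$. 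Hence the boundary contribution reduces to $\sqrt n\,[\partial_u\bm f_s(0)-K_L^\top\bm f_s(0)]^\top(\bm\eta^1-\bm\rho(1))$ plus $o(1)$, and the bracket vanishes by \eqref{eq2.2}; the right end is handled in the same way. Thus $\partial_s\bm f_s=-\mathcal A^\dagger\bm f_s$ kills the drift, and $\bm Y_t^n(\bm f)=\bm Y_0^n(S_t\bm f)+\mathcal M^n_t+o_{L^2}(1)$. The residual discretisation errors are $O(n^{-1/2})$ times $\frac1{\sqrt n}\sum|\bm\eta-\bm\rho|$. To bound them in $L^2$, and to control the boundary terms precisely, we need a second-moment bound $\mathbb E[\bm Y_s^n(\bm g)^2]\le C\|\bm g\|^2$. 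This follows from the correlation estimate $\max_{x<y}|\varphi^{n,s}_{ij}(x,y)|\le C/n$, uniformly in $s\le T$, which we take from the two-point correlation estimates of Section 8.

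\textbf{Step 2: quadratic variation.} The quadratic variation of $\mathcal M^n$ is $\int_0^t\Gamma_s^n\,ds$ with three contributions:
\begin{itemize}
\item the exchange part, $n^2\cdot\frac1n\sum_x\sum_{k\neq l}\eta_k^x\eta_l^{x+1}(\cdots)^2$, which produces discrete gradients of $\bm f_s$;
\item the conversion part, $\frac1n\sum_x\gamma_k\eta_k^x(f_1-f_2)^2$;
\item the boundary part, $n^2\cdot\frac1n\cdot\frac1n(\alpha_k\eta_0^1+\alpha_0\eta_k^1)f_k(1/n)^2$, and its analogue at the right end.
\end{itemize}
We must show $\Gamma^n_s\to Q_s(\bm f_s,\bm f_s)$ in $L^1$. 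The conversion and boundary pieces are linear in $\bm\eta$, and the bulk piece is linear after using $\sum_k\eta_k=1$. Expanding $\sum_{k\ne l}\eta_k^x\eta_l^{x+1}(f_k-f_l)^2$, however, produces products $\eta_k^x\eta_l^{x+1}$. The replacement lemma of Section 4, combined with the $C/n$ correlation bound, replaces these products by $\rho_k\rho_l$. Their sum then gives $2\,\partial\bm f^\top\bm\chi(\bm\rho)\partial\bm f$, and the factor $2$ comes from counting both jump directions. The linear terms converge by Theorem~\ref{th2.1}. The boundary pieces need the density at sites $1,n-1$ to converge to $\bm\rho(s,0),\bm\rho(s,1)$, which follows from the discrete Robin equation \eqref{eq2.4} together with Assumptions \ref{ass2}--\ref{ass3}, which propagate a uniform discrete gradient bound on $\bm\rho^{n,s}$.

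\textbf{Step 3: tightness and conclusion.} Tightness in $\mathcal D([0,T],(\mathscr S^\dagger)')$ follows from Mitoma's criterion, which reduces it to real-valued tightness of $\bm Y^n_\cdot(\bm f)$. We check this by Aldous' criterion on the decomposition above: the initial term is handled by the second-moment bound, the integral terms are Lipschitz in $t$ in $L^2$, and the martingale term has a bounded quadratic variation rate and jumps of size $O(n^{-1/2})$. The small jumps give continuity of limit points. A martingale CLT (for instance Whitt's theorem), applied to $\mathcal M^n$ with deterministic limiting quadratic variation $\int_0^tQ_r(S_{t-r}\bm f,S_{t-r}\bm f)\,dr$, shows that $\mathcal W_t(\bm f)$ is centred Gaussian with this variance. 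Since $\mathcal M^n$ starts at $0$, $\mathbb E[\bm Y_0^n(\bm g)\mathcal M^n_t(\bm f)]=0$, and uniform integrability, again from the $L^2$ bounds, passes this orthogonality to the limit.

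The main obstacle is the uniform correlation bound $|\varphi_{ij}^{n,s}|\le C/n$. As the introduction notes, the off-diagonal boundary operator on the four-component system lacks positive rates, so we cannot represent it by an auxiliary random walk. Our first attempt would be a discrete maximum-principle/energy argument for the coupled linear system. We would diagonalise the conversion coupling by passing to the variables $\eta_1+\eta_2$ and $\gamma_1\eta_1-\gamma_2\eta_2$. In these variables the total-occupation correlations satisfy a one-species-like equation, and the mixed ones are damped by the reaction. We would then bound the source terms, which come from the diagonal $x=y$ and are of order $n^2\cdot(\text{gradient})^2\sim1$, via Duhamel with heat-kernel estimates.
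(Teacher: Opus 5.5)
Your architecture is the paper's. You use Dynkin's formula along $\bm f_s=S_{t-s}\bm f$, cancel the $\sqrt n$ boundary terms with the adjoint Robin conditions, and identify the quadratic variation via Lemma~\ref{l4.2} and Theorem~\ref{th2.1}. The rest also matches: a martingale CLT, Mitoma plus Aldous using the $C/n$ two-point bound, and orthogonality because the martingale is centred given $\mathcal F_0$.

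The genuine difference is your route to the correlation bound. The paper enlarges the system to the nine functions $\varphi_{ab}$, $a,b\in\{0,1,2\}$, so that all colour dynamics form a forward Markov generator. It then dominates $\|\bm\varphi^{n,t}(x,y)\|_1$ by a scalar walk killed near the boundary, via strict $\ell^1$-dissipation of $R^\alpha,R^\beta$ on zero-sum vectors and a Trotter argument.

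Your change of variables also works, but only once you include a forcing term you omitted. Stirring swaps whole sites, conversion preserves $\xi^x=\eta_1^x+\eta_2^x$, and removal occurs at rate $\alpha_0/n$ (resp.\ $\beta_0/n$) for either species. Hence $\xi$ is exactly the one-species slow-boundary SSEP, and the bound on $\mathrm{Cov}(\xi^x,\xi^y)$ is literally the one-species case.

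The mixed correlations, however, are not merely damped by the reaction. With $V=\gamma_1\eta_1-\gamma_2\eta_2$ and $q_0=\gamma_1\alpha_1-\gamma_2\alpha_2$, one has $n^2L_n^-V^1=n\bigl[q_0(1-\xi^1)-\alpha_0V^1\bigr]$. So $\partial_t\mathrm{Cov}(V^1,\xi^y)$ contains the forcing $-nq_0\,\mathrm{Cov}(\xi^1,\xi^y)$. The same happens with $q_1=\gamma_1\beta_1-\gamma_2\beta_2$ at $y=n-1$, and $\mathrm{Cov}(V^x,V^y)$ is forced by the mixed correlations.

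This coupling is triangular. The negative off-diagonal boundary entries of the four-component system therefore become sources rather than jump rates. Each source has size $n\cdot O(1/n)=O(1)$ and sits where the walk is killed at rate $n\alpha_0$ (on the diagonal $\{y=x+1\}$ stirring just swaps the labels $V\xi\leftrightarrow\xi V$). The walk spends expected time at most $1/(n\alpha_0)$ there, so these sources contribute $O(1/n)$, as does the diagonal source.

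What each route buys: yours reduces everything to Feynman--Kac bounds for genuinely sub-Markovian walks, with no nine-component enlargement or domination step. The paper's enlargement does not need the removal rate to be species-independent, which is exactly what makes $\xi$ autonomous and your system triangular.

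Two smaller points need fixing.

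First, $\Gamma^n_s\to Q_s(\bm f_s,\bm f_s)$ in $L^1$ cannot hold at fixed $s$. The reservoir pieces $(\alpha_k\eta_0^1(s)+\alpha_0\eta_k^1(s))f_k(s,1/n)^2$ are single-site variables with variance of order one, and convergence of $\rho_k^{n,s}(1)$ identifies only their mean. What is needed, and true, is $\int_0^t\Gamma^n_s\,ds\to\int_0^tQ_s(\bm f_s,\bm f_s)\,ds$. For the boundary pieces, apply Lemma~\ref{l4.2} at $x=1$ with $\psi\equiv1$ to replace $\eta_k^1$ by $\overrightarrow\eta_k^{\,\lfloor\varepsilon n\rfloor}(1)$, then use Theorem~\ref{th2.1}.

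Second, $L^2$ bounds on $\bm Y_0^n(\bm g)$ and $\mathcal M_t^n(\bm f)$ give only $L^1$-boundedness of their product, not uniform integrability. A fourth-moment bound on $\mathcal M_t^n(\bm f)$ from Burkholder--Davis--Gundy supplies it, since the quadratic-variation rate is bounded and the jumps are $O(n^{-1/2})$.

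Incidentally, the drift remainders are deterministic: $O(n^{-3/2})$ in the bulk and $O(n^{-1/2})$ at sites $1$ and $n-1$. So no second-moment bound is needed there.
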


 \begin{theorem}[Ornstein-Uhlenbeck limit]\label{th2.3}
 	Assume that the sequence of initial fluctuation fields $ \left\{\bm Y_0^n\right\}_{n\in \mathbb{N}} $
 	converges, as \(n\to\infty\), to a mean-zero Gaussian field \(\bm Y_0\) on
 	\( (\mathscr S^\dagger)'\), whose covariance is given by
 	\[
 	\lim_{n\to\infty}
 	\mathbb E_{\mu^n}^n
 	\left[
 	\bm Y_0^n(\bm f)\bm Y_0^n(\bm g)
 	\right]
 	=
 	\mathbb E
 	\left[
 	\bm Y_0(\bm f)\bm Y_0(\bm g)
 	\right]
 	=:
 	\sigma(\bm f,\bm g),
 	\qquad
 	\bm f,\bm g\in\mathscr S^\dagger.
 	\]
 	Then the sequence \(\{\mathbb Q_n\}_{n \in \mathbb{N} }\) converges in
 	\(\mathcal D([0,T],(\mathscr S^\dagger)')\) to the generalized Ornstein-Uhlenbeck process \(\{\bm Y_t:0\le t\le T\}\) characterized by the
 	martingale problem
 	\[
 	\mathcal M_t(\bm f)
 	=
 	\bm Y_t(\bm f)-\bm Y_0(\bm f)
 	-
 	\int_0^t
 	\bm Y_s(\mathcal A^\dagger \bm f) \,\mathrm ds,
 	\qquad
 	\bm f\in\mathscr S^\dagger,
 	\]
 	where \(\mathcal M_t(\bm f)\) is a continuous centered Gaussian martingale with
 	quadratic covariation
 	\[
 	\left\langle
 	\mathcal M(\bm f),\mathcal M(\bm g)
 	\right\rangle_t
 	=
 	\int_0^t
 	Q_s(\bm f,\bm g) \,\mathrm ds.
 	\] 	
 	Equivalently, for every \(0\le s\le t\le T\) and every
 	\(\bm f,\bm g\in\mathscr S^\dagger\),
 	\[
 	\mathbb E\left[
 	\bm Y_t(\bm f)\bm Y_s(\bm g)
 	\right]
 	=
 	\sigma(S_t\bm f,S_s\bm g)
 	+
 	\int_0^s
 	Q_r\left(
 	S_{t-r}\bm f,
 	S_{s-r}\bm g
 	\right) \,\mathrm dr.
 	\]
 \end{theorem}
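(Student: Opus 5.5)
The plan is to combine Theorem \ref{th2.2} with a uniqueness argument for the martingale problem. Tightness of $\{\mathbb Q_n\}$ is already provided by Theorem \ref{th2.2}, so it suffices to show that every limit point $\mathbb Q$ is the law of the same process, namely the one solving the stated martingale problem with initial law the Gaussian field of covariance $\sigma$.

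\textbf{Step 1: the martingale problem holds in the limit.} For $\bm f\in\mathscr S^\dagger$ I would use the Dynkin martingale
\[
\mathcal M_t^n(\bm f)=\bm Y_t^n(\bm f)-\bm Y_0^n(\bm f)-\int_0^t(\partial_s+L_n)\bm Y_s^n(\bm f)\,\mathrm ds .
\]
The bulk part of $L_n$ produces $\bm Y^n_s$ applied to the discrete Laplacian plus $M^\top$, and this converges to $\bm Y_s(\mathcal A^\dagger\bm f)$. The time derivative of $\rho^{n,s}$ cancels the deterministic part via \eqref{eq2.4}. The boundary terms come from summation by parts at $x=1,n-1$ together with $L_n^\pm$. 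They are of the form $\sqrt n\,[\partial_u f_k(0)-(K_L^\top\bm f(0))_k]$ times centered occupation variables, up to $O(n^{-1/2})$ errors. These vanish exactly because of the Robin condition in \eqref{eq2.2} with $m=0$. The cross term $\alpha_k\eta_0^1$ couples the species, and this is precisely why $K_L^\top$ rather than a scalar appears. So $\mathcal M^n_t(\bm f)=\bm Y^n_t(\bm f)-\bm Y^n_0(\bm f)-\int_0^t\bm Y^n_s(\mathcal A^\dagger\bm f)\,\mathrm ds+o(1)$ in $L^2$.

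\textbf{Step 2: the quadratic variation.} The predictable quadratic variation $\int_0^t (L_n\bm Y^2-2\bm Y L_n\bm Y)\,\mathrm ds$ is a sum of local functions. The exchange part is $\frac1n\sum_x$ of gradients squared times $\eta_k^x\eta_l^{x+1}$. Replacing products at neighbouring sites by products of the means $\rho_k^{n,s}(x)\rho_l^{n,s}(x+1)$ is the replacement lemma of Section 4, and it uses the two-point correlation estimates of Section 8 (the $O(1/n)$ bound propagated from Assumption \ref{ass4}). Together with Theorem \ref{th2.1} and the $1/n$ closeness of $\rho^{n,s}$ to $\bm\rho$ (Assumptions \ref{ass2}, \ref{ass3}), this gives $\int_0^tQ^s_s$. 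The conversion part gives $Q^c$ by the law of large numbers. The boundary part gives $n^2\cdot\frac1n\cdot\frac1n$ times the single-site rates, hence $Q^{bd}$ with $D_L,D_R$.

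Because jumps of $\mathcal M^n$ are $O(n^{-1/2})$, the limit $\mathcal M_t(\bm f)$ is continuous. Since $\mathcal M_t(\bm f)^2-\int_0^tQ_s(\bm f,\bm f)\,\mathrm ds$ is a limit of uniformly integrable martingales (the fourth moments are bounded by Burkholder--Davis--Gundy), it is a martingale. Its bracket is deterministic, so by L\'evy's characterization $\mathcal M(\bm f)$ is a continuous Gaussian martingale. Polarization gives the covariation formula.

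\textbf{Step 3: uniqueness and the covariance formula.} Under $\mathbb Q$, $\bm Y_0$ is Gaussian with covariance $\sigma$ by hypothesis. To obtain uniqueness I would apply the martingale property to the time-dependent test function $S_{t-s}\bm f$, which stays in $\mathscr S^\dagger$ by the semigroup properties of Section 3. This yields $\bm Y_t(\bm f)=\bm Y_0(S_t\bm f)+\int_0^t \mathrm d\mathcal M_s(S_{t-s}\bm f)$, consistent with Theorem \ref{th2.2}.

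Conditional on $\mathcal F_s$, the increment is Gaussian with deterministic variance and is independent of the past. Hence all finite-dimensional distributions are Gaussian and determined. The two-time covariance follows by writing $\bm Y_t(\bm f)=\bm Y_s(S_{t-s}\bm f)+$ (an increment independent of $\mathcal F_s$), and then using $\mathbb E[\bm Y_s(\bm h)\bm Y_s(\bm g)]=\sigma(S_s\bm h,S_s\bm g)+\int_0^sQ_r(S_{s-r}\bm h,S_{s-r}\bm g)\,\mathrm dr$ with $\bm h=S_{t-s}\bm f$. Uniqueness of limit points then gives convergence of $\mathbb Q_n$.

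\textbf{Main obstacle.} I expect the main obstacle to be Step 2, namely the replacement of the quadratic and boundary products by deterministic quantities. This relies on the coupled correlation bounds, since the four-component correlation system lacks a random-walk interpretation. I would lean on the estimates of Section 8 there.
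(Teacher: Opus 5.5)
Your overall route is the paper's. You take tightness from Theorem~\ref{th2.2} and identify every limit point through the Dynkin martingale, with the $m=0$ adjoint Robin condition killing the $\sqrt n$ boundary terms. You then prove uniqueness via the test functions $S_{t-s}\bm f$ and Gaussian conditional characteristic functions, as in Proposition~\ref{p6.1}. Using L\'evy's characterization instead of the martingale CLT behind Proposition~\ref{p5.1} is an inessential variation.

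One step in Step~2 is misdescribed, and as you state it, it would not work.
\begin{itemize}
\item Lemma~\ref{l4.2} does not replace $\eta_k^x\eta_l^{x+1}$ by $\rho_k^{n,s}(x)\rho_l^{n,s}(x+1)$, and it does not use Section~8. It replaces one occupation variable by a block average $\overrightarrow\eta_k^{\,\lfloor\varepsilon n\rfloor}$ or $\overleftarrow\eta_k^{\,\lfloor\varepsilon n\rfloor}$, using only the entropy inequality and the Dirichlet-form bounds of Lemma~\ref{l4.1}. The block averages are empirical densities tested against $\iota_\varepsilon^{x/n}$ and $\hbar_\varepsilon^{x/n}$. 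The hydrodynamic limit (Theorem~\ref{th2.1}) makes them deterministic before $\varepsilon\to0$.
\item The two-point bound of Proposition~\ref{p8.3} cannot substitute for this. It gives $\mathbb E[\eta_k^x\eta_l^{x+1}]=\rho_k^{n,s}(x)\rho_l^{n,s}(x+1)+O(1/n)$ and kills the terms linear in $\bar\eta$. However, the remaining sum $\frac1n\sum_x g(\tfrac xn)\,\bar\eta_k^x\bar\eta_l^{x+1}$ only has small \emph{mean}. Showing it vanishes in probability would need four-point correlation bounds, which are not available.
\item The same applies to the boundary part of the bracket. Obtaining $\int_0^t\eta_k^1(s)\,\mathrm ds\to\int_0^t\rho_k(s,0)\,\mathrm ds$ requires the replacement lemma at $x=1$ plus the hydrodynamic limit, not merely the single-site rates.
\end{itemize}
So the ``main obstacle'' you name is misplaced. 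Section~8 enters only through the tightness proof, which you already import from Theorem~\ref{th2.2}.

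In Step~3, the passage from the time-independent martingale problem to the test function $S_{t-s}\bm f$ is not automatic. The paper justifies it with a partition argument using the exponential martingales $X^{s_j}_{s_{j+1}}(S_{\mathcal T-s_j}\bm f)$ and Lemma~\ref{l6.1}. Alternatively, you can take it directly at the microscopic level, as in the proof of Theorem~\ref{th2.2}.
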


\begin{remark}
The limiting process can be formally interpreted as the solution of
\begin{align*}
\mathrm d\bm Y_t
&=
(\Delta \mathrm I_2+M)\bm Y_t\,\mathrm dt
+
\nabla\cdot\left(\sqrt{2\bm \chi(\bm\rho_t)}\,\mathrm d\bm W_t^{s}\right)
+
\sqrt{c(t,\cdot)}
\begin{pmatrix}
	-1\\
	1
\end{pmatrix}
\mathrm d W_t^{c} \\
&\quad +
\sqrt{D_L(t)}\,\mathrm d\bm B_t^L\,\delta_0
+
\sqrt{D_R(t)}\,\mathrm d\bm B_t^R\,\delta_1 .	 
\end{align*}
Here \(\bm W^s\) is a two-dimensional space-time white noise, \(W^c\) is a
scalar space-time white noise, and \(\bm B^L,\bm B^R\) are two-dimensional
Brownian motions. All these noises are mutually independent. 
This expression is purely formal. Its rigorous interpretation is given by the
martingale problem above, whose quadratic variation is determined by the
bilinear form \(Q_t=Q_t^s+Q_t^c+Q_t^{bd}\).	 
\end{remark}

\section{Semigroup results}

In this section we collect the semigroup results needed in the sequel. We consider both the homogeneous linear equation associated with the hydrodynamic equation and its adjoint equation. The semigroup associated with the original operator will be denoted by \((T_t)_{t\ge0}\), while the semigroup associated with the adjoint operator will be denoted by \((S_t)_{t\ge0}\).
Let
\[
\mathcal A=\Delta \mathrm I_2+M,
\qquad
\mathcal A^\dagger=\Delta \mathrm I_2+M^\top .
\]
Recalling the space $\mathscr S^\dagger $ introduced in \eqref{eq2.2}, we analogously define 
\begin{equation}\label{eq3.1}
	\mathscr S
	=
	\left\{
	\bm f\in \mathcal{C}^\infty\left([0,1];\mathbb R^2\right):
	\begin{array}{l}
		\partial_u(\mathcal A^m\bm f)(0)
		=
		K_L(\mathcal A^m\bm f)(0),\\[0.15cm]
		\partial_u(\mathcal A^m\bm f)(1)
		=
		-K_R(\mathcal A^m\bm f)(1),
	\end{array}
	\quad \forall\,m\in\mathbb N\cup\{0\}
	\right\}.
\end{equation}

    We consider the original homogeneous linear problem
    \begin{align}\label{eq3.2}
    	\begin{cases}
    		\partial_t \bm{\rho}(t,u) = \Delta \bm {\rho}(t,u) + M  \bm {\rho}(t,u), & \text{for } t>0,u\in (0,1), \\
    		\partial_u \bm {\rho}(t,0) = K_L  \bm {\rho}(t,0), & \text{for } t>0, \\ 
    		\partial_u \bm {\rho}(t,1) = - K_R \bm {\rho}(t,1) , & \text{for } t>0, \\
    		\bm {\rho}(0,u) = \bm {\rho}_0(u), & u \in [0,1],
    	\end{cases}
    \end{align}	
    and the adjoint homogeneous linear problem
    \begin{align}\label{eq3.3}
    	\begin{cases}
    		\partial_t \bm{\rho}(t,u) = \Delta \bm {\rho}(t,u) + M^{\top} \bm {\rho}(t,u), & \text{for } t>0,u\in (0,1), \\
    		\partial_u \bm {\rho}(t,0) = K_L^{\top} \bm {\rho}(t,0), & \text{for } t>0, \\ 
    		\partial_u \bm {\rho}(t,1) = - K_R^{\top} \bm {\rho}(t,1) , & \text{for } t>0, \\
    		\bm {\rho}(0,u) = \bm {\rho}_0(u), & u \in [0,1].
    	\end{cases}
    \end{align}
    Let  
    \[
    D(\mathcal{A} )
    =
    \left\{
    \bm f\in \mathcal{H}^{2 }\left((0,1);\mathbb R^2\right):~
    	\partial_u \bm f(0)
    	=
    	K_L \bm f(0), 
    	\partial_u \bm f(1)
    	=
    	-K_R \bm f(1)
    \right\},
    \]	 
    \[
    D(  \mathcal{A}^{\dagger} ) = 
    \left\{
    \bm f\in \mathcal H^{2}((0,1);\mathbb R^2):~
    	\partial_u \bm f  (0)
    	=
    	K_L^\top \bm f   (0), 
    	\partial_u \bm f  (1)
    	=
    	-
    	K_R^\top  \bm f (1)
    \right\},
    \]	 
    \[
    D(\mathcal{A}^m ) = \left\{
    \bm f\in D(\mathcal{A}^{m-1} ):~ \mathcal{A}^{m-1} \bm f \in D(\mathcal{A} )
    \right\},
    \]
    \[
    D((\mathcal{A}^\dagger)^m ) = \left\{
    \bm f\in D((\mathcal{A}^\dagger)^{m-1} ):~ (\mathcal{A}^\dagger)^{m-1} \bm f \in D(\mathcal{A}^\dagger )
    \right\}
    \]
    for $ m \ge 2 $.
\begin{proposition}\label{p3.1}
	The operators \(\mathcal A\) and $ \mathcal A^\dagger $ generate analytic \(C_0\)-semigroups
	\((T_t)_{t\ge0}\) and \((S_t)_{t\ge0}\) on \( \mathcal L^2((0,1);\mathbb R^2)\).
	Consequently, for every initial datum in \(\mathcal L^2((0,1);\mathbb R^2)\), equation \eqref{eq3.2} $\left( \eqref{eq3.3}\right)$  admits a unique mild solution given by $ \bm\rho(t)=T_t\bm\rho_0 $ $ \left( \bm \rho(t)=S_t\bm \rho_0 \right) $.
	Moreover, these solutions are smooth in space and time for every \(t>0\).
	If \( \bm\rho_0\in\mathscr S \) $ \left( \bm \rho_0\in\mathscr S^\dagger \right) $, then $ T_t\bm\rho_0 $ $ \left( S_t\bm \rho_0 \right)  $ will be
	$\mathcal C^\infty$ in space and time for all $t\ge0$.
\end{proposition}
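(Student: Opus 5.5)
The plan is to realize $\mathcal A$ and $\mathcal A^\dagger$ as perturbations of a self-adjoint operator coming from a closed sesquilinear form, and then use perturbation theory for analytic semigroups. We treat $\mathcal A$; the argument for $\mathcal A^\dagger$ is identical once $M,K_L,K_R$ are replaced by their transposes.

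\textbf{Step 1: the form.} On $V=\mathcal H^1((0,1);\mathbb R^2)$ define
\[
\mathfrak a(\bm f,\bm g)=\int_0^1 \partial_u\bm f^\top\partial_u\bm g\,\mathrm du+\bm g(0)^\top K_L\bm f(0)+\bm g(1)^\top K_R\bm f(1)-\int_0^1 \bm g^\top M\bm f\,\mathrm du .
\]
Integration by parts shows that the operator associated with $-\mathfrak a$ is exactly $\mathcal A$ with domain $D(\mathcal A)$. The natural boundary conditions are $\partial_u\bm f(0)=K_L\bm f(0)$ and $\partial_u\bm f(1)=-K_R\bm f(1)$, and elliptic regularity gives $\mathcal H^2$.

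The form is continuous on $V$. It is also elliptic in the sense
\[
\mathfrak a(\bm f,\bm f)+\lambda\|\bm f\|_{\mathcal L^2}^2\ge c\|\bm f\|_{\mathcal H^1}^2 .
\]
The only non-standard point is that $K_L,K_R$ are not symmetric. Their contribution is a boundary term of size at most $C(|\bm f(0)|^2+|\bm f(1)|^2)$, and the one-dimensional trace inequality $|\bm f(0)|^2\le \varepsilon\|\partial_u\bm f\|^2+C_\varepsilon\|\bm f\|^2$ absorbs it. The matrix $M$ is a bounded zeroth-order term.

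Continuity, ellipticity and density of $V$ in $\mathcal L^2$ then give, by the standard theorem (Kato; Ouhabaz, Thm.~1.52), that $\mathcal A$ generates an analytic $C_0$-semigroup $T_t$ on $\mathcal L^2$. Applying the same result to the form with $K_L^\top,K_R^\top,M^\top$ gives $S_t$. Since that form is the adjoint form, this also shows $S_t=T_t^*$.

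\textbf{Step 2: mild solutions.} For $\bm\rho_0\in\mathcal L^2$, existence and uniqueness of the mild solution $T_t\bm\rho_0$ is the standard semigroup fact.

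\textbf{Step 3: smoothness for $t>0$.} Analyticity gives $T_t\bm\rho_0\in D(\mathcal A^m)$ for every $m$ and every $t>0$, with $t\mapsto T_t\bm\rho_0$ smooth into $D(\mathcal A^m)$. By induction, $D(\mathcal A^m)\subset\mathcal H^{2m}$: if $\mathcal A\bm f\in\mathcal H^{2m-2}$, then $\bm f''=\mathcal A\bm f-M\bm f\in\mathcal H^{2m-2}$. Sobolev embedding then gives $\mathcal C^\infty$ regularity in space, and time derivatives are $\mathcal A^jT_t\bm\rho_0$.

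\textbf{Step 4: smoothness up to $t=0$ when $\bm\rho_0\in\mathscr S$.} Here $\bm\rho_0\in D(\mathcal A^m)$ for all $m$. Indeed, $\mathcal A^{j}\bm\rho_0$ is smooth and satisfies the Robin conditions for every $j$, so $\bm\rho_0\in\bigcap_m D(\mathcal A^m)$. Since $\mathcal A^m$ commutes with $T_t$, we get $\mathcal A^mT_t\bm\rho_0=T_t\mathcal A^m\bm\rho_0$, which is continuous on $[0,\infty)$ in $\mathcal L^2$. Hence $T_t\bm\rho_0$ is continuous in $t\ge0$ with values in every $\mathcal H^{2m}$, and $\partial_t^jT_t\bm\rho_0=T_t\mathcal A^j\bm\rho_0$. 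Sobolev embedding then yields joint $\mathcal C^\infty$ regularity on $[0,\infty)\times[0,1]$.

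The main obstacle is Step 1, specifically the coercivity with non-symmetric boundary matrices and the identification of the form operator's domain with the $\mathcal H^2$ Robin domain. The trace inequality should handle the first issue. For the second, I will integrate by parts against test functions supported near each endpoint to extract the boundary conditions, and use $\bm f''\in\mathcal L^2$ for the $\mathcal H^2$ regularity.
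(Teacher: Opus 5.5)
Your proposal is correct and follows essentially the same route as the paper. Both use a continuous form on $\mathcal H^1((0,1);\mathbb R^2)$ carrying the Robin boundary terms, made $\mathcal L^2$-elliptic by the trace inequality with a small parameter; identify its operator with $\mathcal A$ (resp.\ $\mathcal A^\dagger$) on the $\mathcal H^2$ Robin domain; use $D(\mathcal A^m)\subset\mathcal H^{2m}$ for smoothing at $t>0$; and use the commutation of $T_t$ with $\mathcal A^m$ on $\bigcap_m D(\mathcal A^m)\supset\mathscr S$ for regularity up to $t=0$. Your extra remark that the transposed form is the adjoint form, so $S_t=T_t^*$, is a small bonus that the paper only asserts without proof in Lemma 3.1, and the perturbation theory mentioned in your opening sentence is never actually needed, since you apply the form theorem directly to the full non-symmetric form, as the paper does.
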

\begin{proof}
	 We present the proof for the adjoint operator $ \mathcal A^\dagger $ only; the proof for $ \mathcal A $ is analogous.
	 Let
	 \[
	 H= \mathcal L^2((0,1);\mathbb R^2),
	 \qquad
	 V= \mathcal H^1((0,1);\mathbb R^2),
	 \]
	 with norms
	 \[
	 \|\bm f\|_H^2
	 =
	 \int_0^1 |\bm f(x)|^2\,\mathrm dx,
	 \qquad
	 \|\bm f\|_V^2
	 =
	 \|\bm f\|_H^2+\|\partial_x\bm f\|_H^2.
	 \]
	 Let \(j:V\to H\) be the canonical embedding \(j(\bm u)=\bm u\). Then \(j\) is a
	 bounded linear operator and \(j(V)\) is dense in \(H\).
	 
	 Define the bilinear form \(a^\dagger:V\times V\to\mathbb R\) by
	 \begin{equation}\label{eq3.4}
	 	a^\dagger(\bm u,\bm v)
	 	=
	 	\int_0^1
	 	\partial_x\bm u(x)\cdot \partial_x\bm v(x)\,\mathrm dx
	 	-
	 	\int_0^1
	 	M^\top\bm u(x)\cdot \bm v(x)\,\mathrm dx
	 	+
	 	K_L^\top\bm u(0)\cdot \bm v(0)
	 	+
	 	K_R^\top\bm u(1)\cdot \bm v(1).
	 \end{equation}
	 We first verify that the bilinear form $ a^\dagger $ is continuous on $ V $, namely, there exists a constant $C>0$  such that
	 \begin{align*}
	 		|a^\dagger (\bm u,\bm v)| \le C\|\bm u\|_V\|\bm v\|_V 
	 \end{align*}
	 for all $ \bm u,\bm v\in V $.
	 By the one-dimensional trace theorem, there exists a constant \(C_{\mathrm{tr}}>0\) such that
	 \[
	 \|\bm u(0)\|+\|\bm u(1)\|
	 \le
	 C_{\mathrm{tr}}\|\bm u\|_V,
	 \qquad
	 \bm u\in V.
	 \]
	 Therefore, for all $ \bm u,\bm v\in V $,
	 \[
	 \left|
	 K_L^\top\bm u(0)\cdot \bm v(0)
	 \right|
	 +
	 \left|
	 K_R^\top\bm u(1)\cdot \bm v(1)
	 \right|
	 \le
	 C\|\bm u\|_V\|\bm v\|_V.
	 \]
	 Since \(M\) is a bounded matrix, the Cauchy-Schwarz inequality gives
	 \[
	 |a^\dagger(\bm u,\bm v)|
	 \le
	 C\|\bm u\|_V\|\bm v\|_V,
	 \qquad
	 \bm u,\bm v\in V.
	 \]
	 Thus \(a^\dagger\) is continuous.
	 
	 We next verify that $ a^\dagger $ is $ j $-elliptic; that is, there exist $ \omega\in\mathbb R $ and $ \mu>0 $ such that
	 \begin{align*}
	 	 a(\bm u,\bm u)+\omega\|j(\bm u)\|_H^2
	 	\ge
	 	\mu\|\bm u\|_V^2 
	 \end{align*}
	 for all $ \bm u \in V $.
	 For \( \bm u\in V \),
	 \[
	 a^\dagger(\bm u,\bm u)
	 =
	 \int_0^1 |\partial_x \bm u(x) | ^2 \,\mathrm dx
	 -
	 \int_0^1 M^\top\bm u(x)\cdot \bm u(x)\,\mathrm dx
	 +
	 K_L^\top\bm u(0)\cdot \bm u(0)
	 +
	 K_R^\top\bm u(1)\cdot \bm u(1).
	 \]
	 By Young's inequality, we have
	 \begin{align*}
	 	\int_0^1 M^{\top}\bm u(x)\cdot \bm u(x)\,\mathrm dx
	 	&= -\int_0^1 \left( \gamma_1 u_1^2(x) + \gamma_2 u_2^2(x) - \left( \gamma_1+ \gamma_2 \right) u_1(x) u_2(x) \right) \,\mathrm dx \\
	 	&\le -\int_0^1 \left( \gamma_1 u_1^2(x) + \gamma_2 u_2^2(x) \right) \,\mathrm dx
	 	+ \frac{ \gamma_1 + \gamma_2 }{2} \int_0^1 \left( u_1^2(x) + u_2^2(x) \right) \,\mathrm dx \\
	 	&= \frac{1}{2} \int_0^1 \left( \left( \gamma_2 - \gamma_1 \right) u_1^2(x)
	 	+ \left( \gamma_1 - \gamma_2 \right) u_2^2(x)  \right) \,\mathrm dx \\
	 	&\le \frac{|\gamma_1 - \gamma_2|}{2} \| \bm{u} \|_{H}^2.
	 \end{align*}
	 Moreover, for every \(\varepsilon>0\), the trace inequality with parameter gives
	 \[
	 |\bm u(0)|^2+|\bm u(1)|^2
	 \le
	 \varepsilon\|\partial_x\bm u\|_H^2
	 +
	 C_\varepsilon\|\bm u\|_H^2.
	 \]
	 Consequently,
	 \[
	 K_L^\top\bm u(0)\cdot \bm u(0)
	 +
	 K_R^\top\bm u(1)\cdot \bm u(1)
	 \ge
	 -
	 C\varepsilon\|\partial_x\bm u\|_H^2
	 -
	 C_\varepsilon\|\bm u\|_H^2.
	 \]
	 Then
	 \begin{align*}
	 	a^\dagger (\bm u,\bm u) \ge 
	 	\left( 1 - C\varepsilon \right) \|\partial_x \bm u\|_H^2 - \left( \frac{|\gamma_1 - \gamma_2|}{2} + C_\varepsilon \right)  \| \bm{u} \|_{H}^2.
	 \end{align*}
	 We now fix $ \varepsilon  $  small enough so that $ 1 - C\varepsilon \ge \frac{1}{2} $.
	 Set $ \omega = \frac{1 + | \gamma_1 - \gamma_2|}{2} + C_\varepsilon  $, then
	 \begin{align*}
	 	a^\dagger (\bm u,\bm u)+\omega\|j(\bm u)\|_H^2
	 	\ge
	 	\frac{1}{2}\|\partial_x \bm u\|_H^2 + \frac{1}{2} \|\bm u\|_H^2
	 	= \frac{1}{2} \|\bm u\|_V^2.
	 \end{align*}
	 Thus \(a^\dagger\) is \(j\)-elliptic.
	 
	 By Theorem~2.1 in \cite{ref1}, the operator associated with \(a^\dagger\) is \(m\)-sectorial. We now identify this operator.
     Let $L_{\mathrm{Rob}}$ be the operator defined by
	\[
	L_{\mathrm{Rob}}\bm u
	=
	- \Delta \bm u-M^\top \bm u = - \mathcal{A}^\dagger \bm u
	\]
	on the domain
	\[
	D(L_{\mathrm{Rob}})
	=
	\left\{
	\bm f\in \mathcal  H^2((0,1);\mathbb R^2):
	\partial_u\bm f(0)=K_L^\top \bm f(0),
	\partial_u\bm f(1)=-K_R^\top \bm f(1)
	\right\} = D(  \mathcal{A}^{\dagger} ).
	\]
	For every \(\bm u\in D(L_{\mathrm{Rob}} )\) and every \(\bm v\in V\), integration by
	parts gives
	\[
	a^\dagger (\bm u,\bm v)=(L_{\mathrm{Rob}}\bm u,\bm v)_H.
	\]
	Conversely, suppose that \(\bm u\in V\) and \(\bm h\in H\) satisfy
	\[
	a^\dagger(\bm u,\bm v)
	=
	(\bm h,\bm v)_H,
	\qquad
	\forall\,\bm v\in V.
	\]
	Taking first \(\bm v\in \mathcal C_c^\infty((0,1);\mathbb R^2)\), we obtain
	\[
	-\Delta\bm u-M^\top\bm u=\bm h
	\]
	in the weak sense on \((0,1)\).
	Since \(\bm h\in H\), elliptic regularity implies $ \bm u\in \mathcal H^2((0,1);\mathbb R^2) $.
	Then, integrating by parts for arbitrary $\bm v\in V$, we obtain
	\[
	\bigl(-\partial_x\bm u(0)+K_L^\top \bm u(0)\bigr)\cdot \bm v(0)
	+
	\bigl(\partial_x\bm u(1)+K_R^\top \bm u(1)\bigr)\cdot \bm v(1)
	=0.
	\]
	Since the trace map
	\[
	V=H^1((0,1);\mathbb R^2)\to \mathbb R^2\times\mathbb R^2,
	\qquad
	\bm v\mapsto (\bm v(0),\bm v(1))
	\]
	is surjective, it follows that
	\[
	\partial_x\bm u(0)=K_L^\top \bm u(0),
	\qquad
	\partial_x\bm u(1)=-K_R^\top \bm u(1).
	\]
	Therefore $\bm u\in D(L_{\mathrm{Rob}})$ and $ L_{\mathrm{Rob}}\bm u= \bm h $.
	Hence the operator associated with \(a^\dagger\) is \(L_{\mathrm{Rob}} = - \mathcal A^\dagger\).
	It follows that \(\mathcal A^\dagger\) generates an analytic \(C_0\)-semigroup
	\((S_t)_{t\ge0}\) on \(H\). Hence the mild solution of
	\eqref{eq3.3} is \(S_t\bm \rho_0\).
	
	It remains to justify the smoothing properties. Since \(\mathcal A^\dagger\)
	generates an analytic \(C_0\)-semigroup, we have that, for every \(\bm \rho_0\in \mathcal L^2((0,1);\mathbb R^2) \), every \(t>0\), and every \(m\ge1\),
    \[
    S_t\bm \rho_0\in D((\mathcal A^\dagger)^m).
    \]
    Moreover,
    \[
    \partial_t^k S_t\bm \rho_0
    =
    (\mathcal A^\dagger)^k S_t\bm \rho_0,
    \qquad
    t>0,\ k\ge1.
    \]
    By the elliptic regularity for the Robin problem,
    \[
    D((\mathcal A^\dagger)^m)
    \subset
    H^{2m}((0,1);\mathbb R^2),
    \qquad
    m\ge1.
    \]
    Therefore \(S_t\bm \rho_0\) is smooth in space and time for every \(t>0\).
 
	If \(\bm \rho_0\in\mathscr S^\dagger\), by definition, we have $ \bm \rho_0\in D((\mathcal A^\dagger)^m) $ for every $ m\ge0 $.
	Since the analytic semigroup preserves the domains of powers of its generator
	and commutes with the generator on these domains, we obtain
	\[
	S_t\bm \rho_0\in D((\mathcal A^\dagger)^m),
	\qquad
	(\mathcal A^\dagger)^mS_t\bm \rho_0
	=
	S_t(\mathcal A^\dagger)^m\bm \rho_0,
	\qquad
	t\ge0,\ m\ge0.
	\]
	Hence \(S_t\bm \rho_0\in\mathscr S^\dagger\) for every \(t\ge0\). The proof for the original
	operator \(\mathcal A\) is analogous and gives the corresponding statements for
	\((T_t)_{t\ge0}\) and \(\mathscr S \).

\end{proof}

\begin{corollary}\label{c3.1}
	For every \(\bm f\in \mathcal L^2((0,1);\mathbb R^2)\) and every \(t>0\), one has
	\[
	T_t\bm f\in\mathscr S,
	\qquad
	\mathcal A^mT_t\bm f\in\mathscr S,
	\qquad
	m\in\mathbb N.
	\]
	Similarly,
	\[
	S_t\bm f\in\mathscr S^\dagger,
	\qquad
	(\mathcal A^\dagger)^mS_t\bm f\in\mathscr S^\dagger,
	\qquad
	m\in\mathbb N.
	\]
\end{corollary}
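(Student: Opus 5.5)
The plan is to derive Corollary \ref{c3.1} from the smoothing part of Proposition \ref{p3.1}. Since \(\mathcal A\) generates an analytic \(C_0\)-semigroup, it suffices to check two facts for \(t>0\): that \(T_t\bm f\) is smooth, and that \(T_t\bm f\) lies in \(D(\mathcal A^m)\) for every \(m\). Membership in \(\mathscr S\) then follows, because the boundary conditions in \eqref{eq3.1} are exactly what \(D(\mathcal A^{m+1})\) imposes on \(\mathcal A^m\bm f\). I give the argument for \((T_t)\); the argument for \((S_t)\) is identical, with \(M,K_L,K_R\) replaced by their transposes.

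\textbf{Step 1.} Fix \(\bm f\in\mathcal L^2((0,1);\mathbb R^2)\) and \(t>0\). By analyticity of \((T_t)_{t\ge0}\) (Proposition \ref{p3.1}), \(T_t\bm f\in D(\mathcal A^m)\) for every \(m\ge1\). By the elliptic regularity for the Robin problem quoted in that proof, \(D(\mathcal A^m)\subset \mathcal H^{2m}((0,1);\mathbb R^2)\). Sobolev embedding in one dimension gives \(\mathcal H^{2m}\subset\mathcal C^{2m-1}([0,1])\). Since \(m\) is arbitrary, \(T_t\bm f\in\mathcal C^\infty([0,1];\mathbb R^2)\).

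\textbf{Step 2.} Let \(j\ge0\). Because \(T_t\bm f\in D(\mathcal A^{j+1})\), we have \(\mathcal A^jT_t\bm f\in D(\mathcal A)\). Being smooth, \(\mathcal A^jT_t\bm f\) has classical boundary derivatives, and these satisfy
\[
\partial_u(\mathcal A^jT_t\bm f)(0)=K_L(\mathcal A^jT_t\bm f)(0),\qquad \partial_u(\mathcal A^jT_t\bm f)(1)=-K_R(\mathcal A^jT_t\bm f)(1).
\]
As this holds for all \(j\ge0\), it follows that \(T_t\bm f\in\mathscr S\).

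\textbf{Step 3.} For \(m\in\mathbb N\), set \(\bm g=\mathcal A^mT_t\bm f\). It is smooth by Step 1, since \(\mathcal A\) is a differential operator with constant coefficients. For every \(j\ge0\) we have \(\mathcal A^j\bm g=\mathcal A^{j+m}T_t\bm f\), which lies in \(D(\mathcal A)\) because \(T_t\bm f\in D(\mathcal A^{j+m+1})\). The argument of Step 2 then gives \(\bm g\in\mathscr S\).

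An alternative route is to write \(\mathcal A^mT_t\bm f=T_{t/2}\bigl(\mathcal A^mT_{t/2}\bm f\bigr)\), which is valid because analytic semigroups commute with their generator on \(D(\mathcal A^m)\). Since \(\mathcal A^mT_{t/2}\bm f\in\mathcal L^2\), Step 2 applied at time \(t/2\) gives the claim at once.

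\textbf{Main obstacle.} The only step that is not formal is the regularity inclusion \(D(\mathcal A^m)\subset\mathcal H^{2m}\), which identifies abstract domain membership with classical smoothness and classical boundary traces. I would take it from the proof of Proposition \ref{p3.1}. If it needed justification, I would argue by induction on \(m\). Suppose \(\bm u\in D(\mathcal A)\) and \(\mathcal A\bm u\in\mathcal H^{2k}\). Then \(\bm u''=\mathcal A\bm u-M\bm u\). If \(\bm u\in \mathcal H^{2k}\), the right-hand side lies in \(\mathcal H^{2k}\), so \(\bm u\in\mathcal H^{2k+2}\). This bootstrap is a purely one-dimensional ODE argument.
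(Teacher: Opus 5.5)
Your proposal is correct and follows the paper's route: analyticity gives \(T_t\bm f\in D(\mathcal A^m)\) for every \(m\), so each \(\mathcal A^jT_t\bm f\) lies in \(D(\mathcal A)\), and the Robin conditions defining \(\mathscr S\) follow directly from the definition of that domain. Your explicit smoothness check via \(D(\mathcal A^m)\subset\mathcal H^{2m}\subset\mathcal C^{2m-1}\), together with the one-dimensional bootstrap, just spells out what the paper takes from the smoothing statement in Proposition \ref{p3.1}.
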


\begin{proof}
	
	We prove the statement for the adjoint semigroup. The original semigroup is treated in
	the same way.
	
	Let \(\bm f\in \mathcal L^2((0,1);\mathbb R^2)\) and \(t>0\). Based on the above proof, $ S_t\bm f\in D((\mathcal A^\dagger)^m) $ for every $ m\in\mathbb N  $.
	In particular, for every \(m\in\mathbb N\cup\{0\}\),
	\[
	(\mathcal A^\dagger)^mS_t\bm f\in D(\mathcal A^\dagger).
	\]
	By the definition of \(D(\mathcal A^\dagger)\), this implies
	\[
	\partial_u\bigl((\mathcal A^\dagger)^mS_t\bm f\bigr)(0)
	=
	K_L^\top
	\bigl((\mathcal A^\dagger)^mS_t\bm f\bigr)(0), \qquad 
	\partial_u\bigl((\mathcal A^\dagger)^mS_t\bm f\bigr)(1)
	=
	-
	K_R^\top
	\bigl((\mathcal A^\dagger)^mS_t\bm f\bigr)(1).
	\]
	Therefore \(S_t\bm f\in\mathscr S^\dagger \) and $ (\mathcal A^\dagger)^mS_t\bm f\in\mathscr S^\dagger $ for every \(m\in\mathbb N\cup\{0\}\). The proof for \(T_t\) and \(\mathscr S \) is identical.
	
\end{proof}

	We next prove the exponential decay of the semigroups.
	
	\begin{lemma}\label{l3.1}
		There exist constants \(C>0\) and \(\kappa>0\) such that, for every
		\(\bm f\in \mathcal L^2((0,1);\mathbb R^2)\),
		\[
		\|T_t\bm f\|_{\mathcal L^2}
		\le
		Ce^{-\kappa t}\|\bm f\|_{\mathcal L^2},
		\qquad t\ge0.
		\]
		Consequently, the adjoint semigroup also satisfies
		\[
		\|S_t\bm f\|_{\mathcal  L^2}
		\le
		Ce^{-\kappa t}\|\bm f\|_{\mathcal L^2},
		\qquad t\ge0.
		\]
		In particular,
		\[
		\lim_{t\to\infty}T_t\bm f=0,
		\qquad
		\lim_{t\to\infty}S_t\bm f=0
		\]
		in \(\mathcal  L^2((0,1);\mathbb R^2)\).
\end{lemma}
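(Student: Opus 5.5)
The plan is to decouple the system by exploiting the conservation structure of $M$, $K_L$ and $K_R$. The columns of $M$ sum to $0$, and the columns of $K_L$ and $K_R$ sum to $1$. Hence the total density $s:=\rho_1+\rho_2=(1,1)\bm\rho$ of a solution $\bm\rho(t)=T_t\bm f$ of \eqref{eq3.2} solves a closed scalar problem:
\[
\partial_t s=\Delta s,\qquad \partial_u s(t,0)=s(t,0),\qquad \partial_u s(t,1)=-s(t,1).
\]
Once $s$ is controlled, $\rho_1$ solves a scalar damped Robin problem forced by $s$, and we recover $\rho_2=s-\rho_1$. By density, and using Proposition~\ref{p3.1} and Corollary~\ref{c3.1}, it suffices to work with $\bm f\in D(\mathcal A)$, or even $\bm f\in\mathscr S$, so that all computations below are classical. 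The resulting bound then extends to $\mathcal L^2$ by continuity.

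\emph{Step 1: decay of $s$.} Multiply by $s$ and integrate by parts:
\[
\tfrac12\tfrac{d}{dt}\|s\|^2=-\|\partial_u s\|^2-s(t,0)^2-s(t,1)^2.
\]
The elementary inequality $\|g\|_{\mathcal L^2}^2\le 2\bigl(g(0)^2+\|g'\|^2\bigr)$, obtained from $g(u)=g(0)+\int_0^u g'$, shows that the right-hand side is bounded above by $-\tfrac12\bigl(\|\partial_u s\|^2+s(0)^2+s(1)^2\bigr)-\kappa_0\|s\|^2$ for some $\kappa_0>0$. Multiplying by $e^{2\kappa r}$ with $\kappa<\kappa_0$ and integrating in time gives two bounds:
\[
e^{2\kappa t}\|s(t)\|^2\le\|s(0)\|^2,
\qquad
\int_0^t e^{2\kappa r}\bigl(s(r,0)^2+s(r,1)^2\bigr)\,dr\le C\|s(0)\|^2 .
\]
The second, time-integrated boundary estimate is what will handle the boundary forcing in Step 2.

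\emph{Step 2: decay of $\rho_1$.} Substituting $\rho_2=s-\rho_1$ and using $\alpha_0=1-\alpha_1-\alpha_2$, $\beta_0=1-\beta_1-\beta_2$, we get
\[
\partial_t\rho_1=\Delta\rho_1-(\gamma_1+\gamma_2)\rho_1+\gamma_2 s,
\]
with boundary conditions
\[
\partial_u\rho_1(0)=\alpha_0\rho_1(0)+\alpha_1 s(0),\qquad \partial_u\rho_1(1)=-\beta_0\rho_1(1)-\beta_1 s(1).
\]
The energy identity is
\[
\tfrac12\tfrac{d}{dt}\|\rho_1\|^2=-\|\partial_u\rho_1\|^2-(\gamma_1+\gamma_2)\|\rho_1\|^2-\alpha_0\rho_1(0)^2-\beta_0\rho_1(1)^2+\gamma_2\!\int s\rho_1-\alpha_1 s(0)\rho_1(0)-\beta_1 s(1)\rho_1(1).
\]
Since $\alpha_0,\beta_0>0$, Young's inequality absorbs half of the boundary terms and half of the bulk term, leaving a source of size $C\bigl(\|s\|^2+s(0)^2+s(1)^2\bigr)$. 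This gives
\[
\tfrac{d}{dt}\|\rho_1\|^2\le-(\gamma_1+\gamma_2)\|\rho_1\|^2+C\bigl(\|s\|^2+s(0)^2+s(1)^2\bigr).
\]
Multiply by $e^{2\kappa t}$ with $2\kappa<\gamma_1+\gamma_2$ and integrate. The weighted integral of the source is bounded by $C\|\bm f\|^2$, using both bounds of Step 1 together with $\int_0^t e^{2\kappa r}\|s(r)\|^2\,dr\le t\|s(0)\|^2$; to keep this uniform in $t$, we shrink $\kappa$ slightly below the decay rate obtained for $s$. The result is $\|\rho_1(t)\|\le Ce^{-\kappa t}\|\bm f\|$. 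Combining with $\rho_2=s-\rho_1$ proves the estimate for $T_t$.

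\emph{Step 3: the adjoint semigroup.} Integration by parts with the two sets of Robin conditions shows that $\mathcal A^\dagger$ is the $\mathcal L^2$-adjoint of $\mathcal A$. Indeed, the form $a^\dagger$ in \eqref{eq3.4} is the transpose of the form associated with $\mathcal A$. Hence $S_t=T_t^*$ and $\|S_t\|_{\mathcal L^2\to\mathcal L^2}=\|T_t\|_{\mathcal L^2\to\mathcal L^2}\le Ce^{-\kappa t}$. The limits as $t\to\infty$ follow immediately.

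The main obstacle I expect is Step 2, specifically controlling the boundary forcing terms $s(0)\rho_1(0)$ and $s(1)\rho_1(1)$: pointwise-in-time decay of $s$ in $\mathcal L^2$ alone does not control its boundary traces. The plan handles this through the time-integrated weighted trace bound from Step 1, rather than through any pointwise $H^1$ estimate on $s$.
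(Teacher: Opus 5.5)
Your proof is correct, and it follows a genuinely different route from the paper's. The skeleton is shared: both isolate $s=U=\rho_1+\rho_2$, which solves a closed Robin heat equation because $M$ has zero column sums and $K_L,K_R$ have unit column sums. Both then treat a second scalar quantity as a damped Robin problem forced by $U$, and both pass to $S_t=T_t^*$ by duality.

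The two arguments differ in two places.

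\textbf{Choice of second variable.} The paper uses $V=\gamma_1\rho_1-\gamma_2\rho_2$, which is a left eigenvector combination of $M$. It satisfies $\partial_tV=\Delta V-\gamma V$ with no bulk coupling, and is forced only through the boundary terms $q_0U(t,0)$ and $q_1U(t,1)$. Your choice $\rho_1$ carries an extra bulk source $\gamma_2 s$. This is harmless, since $s$ is controlled in $\mathcal L^2$.

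\textbf{Control of $U$ and its traces.} This is the more substantive difference.
\begin{itemize}
\item The paper obtains the decay of $U$ from the Sturm--Liouville eigenfunction expansion of \cite{ref3}. It then bounds the traces pointwise in time via $U(t,0)^2+U(t,1)^2\le C\|U\|_{\mathcal L^2}\|U\|_{\mathcal H^1}\le C(1+t^{-1/2})e^{-2\lambda_1 t}\|U_0\|_{\mathcal L^2}^2$. This step tacitly uses an $\mathcal H^1$-smoothing bound for the Robin heat semigroup, and then integrates the $t^{-1/2}$ singularity.
\item You extract both the decay of $\|s\|$ and a weighted, time-integrated trace bound directly from the dissipation in the energy identity. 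The only tool is $\|g\|^2\le 2(g(0)^2+\|g'\|^2)$.
\end{itemize}
Only time integrals of the traces enter the Gronwall step, so your estimate is exactly what is needed. It makes the argument self-contained: no spectral theory and no parabolic smoothing.

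\textbf{Rates.} The paper's route ties the rate to the principal Robin eigenvalue, giving any $\kappa<\min\{\gamma,\lambda_1\}$. Your crude constant $\kappa_0$ and the Young step with weight $\tfrac12$ give a smaller rate. This is irrelevant for the lemma, which only claims some $\kappa>0$. The larger rate could be recovered by using $\|\partial_u g\|^2+g(0)^2+g(1)^2\ge\lambda_1\|g\|^2$ and a less wasteful Young inequality.

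\textbf{A small simplification.} You do not need to shrink $\kappa$ to handle the bulk source. Since $\|s\|^2\le 2(s(0)^2+\|\partial_u s\|^2)$, the integral $\int_0^t e^{2\kappa r}\|s(r)\|^2\,\mathrm dr$ is already bounded by twice your dissipation integral, hence by $2\|s(0)\|^2$.
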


\begin{proof}
	We first prove the estimate for the original semigroup \((T_t)_{t\ge0}\).
	Let $ \bm \rho = (\rho_1,\rho_2)^\top =  T_t\bm \rho_0(u)$ solve equation \eqref{eq3.2}.
	Define
	\[
	U=\rho_1+\rho_2,
	\qquad
	V=\gamma_1 \rho_1-\gamma_2 \rho_2,
	\qquad
	\gamma=\gamma_1+\gamma_2.
	\]
	A direct computation shows that \(U\) solves the scalar heat equation
	\begin{equation}\label{eq}
		\begin{cases}
			\partial_t U (t,u) =\partial_u^2U (t,u),
			&  t > 0 ,u\in(0,1),\\
			\partial_u U(t,0)=U(t,0), 	&  t > 0 \\
			\partial_u U(t,1)=-U(t,1),	&  t > 0 \\
			U(0,u)= U_0(u), & u \in [0,1].
		\end{cases}
	\end{equation}
	By Proposition 3.1 and Corollary 3.3 in \cite{ref3}, this equation admits the explicit representation 
	\begin{align*}
		U(t,u)=  \sum_{n=1}^{\infty} a_n e^{-\lambda_n t}\Psi_n(u),
	\end{align*}
	where $\{\Psi_n\}_{n\in\mathbb{N}}$ is an orthonormal basis of $\mathcal L^2\left( [0,1]; \mathbb{R} \right) $ constituted by eigenfunctions of the associated regular Sturm-Liouville problem, $a_n$ are the Fourier coefficients of $U_0$ in that basis, and $ \lambda_n  \sim  n^2 \pi^2 $. Moreover, for every initial datum $ U_0(u) \in \mathcal L^2\left( [0,1]; \mathbb{R} \right) $, the solution decays to zero exponentially fast as $ t \to \infty $.
	 
	We now consider \(V\). By direct computation, \(V\) satisfies
	\begin{equation}\label{eq3.6}
		\begin{cases}
			\partial_t V (t,u) =\partial_u^2V (t,u) -\gamma V,
			& t > 0 ,u\in(0,1),\\
			\partial_u V(t,0)= \alpha_0 V(t,0) + q_0U(t,0), 	& t > 0, \\
			\partial_u V(t,1)= -\beta_0 V(t,1) - q_1U(t,1),	& t > 0, \\
			V(0,u)= V_0(u), & u \in [0,1],
		\end{cases}
	\end{equation}
	where
	\[
	 \alpha_0 =1-\alpha_1-\alpha_2 > 0,
	\qquad
	 \beta_0 =1-\beta_1-\beta_2 > 0,
	 \]
	 \[
	 q_0=\gamma_1\alpha_1-\gamma_2\alpha_2,
	 \qquad
	 q_1=\gamma_1\beta_1-\gamma_2\beta_2.
	\]
    Multiplying \eqref{eq3.6} by \(V\) and integrating over \((0,1)\), we obtain
	\begin{align*}
		\frac12\frac{\mathrm d}{\mathrm dt}\|V(t)\|_{\mathcal L^2}^2
		&=
		-\|\partial_uV(t)\|_{\mathcal L^2}^2
		-\gamma\|V(t)\|_{\mathcal L^2}^2
		+
		V(t,1)\partial_uV(t,1)
		-
		V(t,0)\partial_uV(t,0) \\
		&= - \|\partial_uV(t)\|_{\mathcal L^2}^2
		- \gamma\|V(t)\|_{L^2}^2 
		- \alpha_0 V(t,0)^2 - \beta_0 V(t,1)^2 \\
		&\quad - q_0U(t,0)V(t,0) - q_1U(t,1)V(t,1).
	\end{align*}
	By Young's inequality, we have
	\begin{align*}
		&| q_0U(t,0)V(t,0) | \le \frac{\alpha_0}{2} V(t,0)^2 + \frac{q_0^2}{2\alpha_0} U(t,0)^2,\\
		&| q_1U(t,1)V(t,1) | \le \frac{\beta_0}{2} V(t,1)^2 + \frac{q_1^2}{2\beta_0} U(t,1)^2.
	\end{align*}
	Therefore,
	\begin{equation}\label{eq3.7}
		\frac{\mathrm d}{\mathrm dt}\|V(t)\|_{\mathcal L^2}^2
		\le
		-2 \gamma\|V(t)\|_{\mathcal L^2}^2
		+
		C\left(U(t,0)^2+U(t,1)^2\right).
	\end{equation}
	By the one-dimensional trace inequality,
	\[
	U(t,0)^2+U(t,1)^2
	\le
	C\|U(t)\|_{L^2}\|U(t)\|_{\mathcal H^1}
	\le
	C(1+t^{-1/2})e^{-2\lambda_1 t}\|U_0\|_{\mathcal L^2}^2,
	\]
	where $ \lambda_1 > 0 $.
	Substituting this into \eqref{eq3.7} and applying Gronwall's inequality, we obtain
	\begin{align*}
	\|V(t)\|_{\mathcal L^2}^2
	&\le
	e^{-2\gamma t}\|V_0\|_{\mathcal L^2}^2
	+
	C\int_0^t e^{-2 \gamma (t-s)}(1+s^{-1/2})e^{-2\lambda_1 s}\,\mathrm ds\
	\|U_0\|_{\mathcal L^2}^2 .
	\end{align*}
	Let \(0<\kappa<\min\{\gamma,\lambda_1 \}\). We have
	\[
	\int_0^t
	e^{-2\gamma(t-s)}
	(1+s^{-1/2})e^{-2\lambda_1 s}\,\mathrm ds
	\le
	C_\kappa e^{-2\kappa t},
	\]
	because \(s^{-1/2}\) is integrable near \(0\). Hence
	\[
	\|V(t)\|_{\mathcal L^2}^2
	\le
	C_\kappa e^{-2\kappa t}
	\left(
	\|V_0\|_{\mathcal L^2}^2+\|U_0\|_{\mathcal L^2}^2
	\right).
	\]

	Finally, since
	\[
	\rho_1=\frac{V+\gamma_2U}{\gamma_1+\gamma_2},
	\qquad
	\rho_2=\frac{\gamma_1U-V}{\gamma_1+\gamma_2},
	\]
	we conclude that
	\[
	\|\bm \rho(t)\|_{\mathcal L^2}
	\le
	C\left(
	\|U(t)\|_{L^2}+\|V(t)\|_{\mathcal L^2}
	\right)
	\le
	Ce^{-\kappa t}\|\bm \rho_0 \|_{\mathcal L^2}.
	\]
This proves the exponential decay of \(T_t\).

It remains to pass to the adjoint semigroup. Since \(\mathcal A^\dagger\) is the
\(\mathcal L^2\)-adjoint of \(\mathcal A\), the semigroup \(S_t\) is the \(\mathcal L^2\)-adjoint of
\(T_t\). Therefore, for every \(\bm f\in \mathcal L^2((0,1);\mathbb R^2)\),
\[
\|S_t\bm f\|_{\mathcal L^2}
=
\sup_{\|\bm g\|_{\mathcal L^2}=1}
\left|
\langle S_t\bm f,\bm g\rangle_{\mathcal L^2}
\right|
=
\sup_{\|\bm g\|_{\mathcal L^2}=1}
\left|
\langle \bm f,T_t\bm g\rangle_{\mathcal L^2}
\right|.
\]
Using the estimate already proved for \(T_t\), we obtain
\[
\|S_t\bm f\|_{\mathcal L^2}
\le
Ce^{-\kappa t}\|\bm f\|_{\mathcal L^2}.
\]
The proof is complete.
\end{proof}

\section{Replacement lemma}

In this section, we prove a replacement lemma which will play an important role in the
identification of the limiting quadratic variation. We first introduce a reference
product measure.
Let $ \bm r=(r_0,r_1,r_2):[0,1]\to[0,1]^3 $
be a Lipschitz continuous profile such that
\[
r_0(u)+r_1(u)+r_2(u)=1,
\qquad u\in[0,1],
\]
and assume that there exist constants \(0<a<b<1\) such that
\[
a\le r_i(u)\le b,
\qquad
i=0,1,2,\quad u\in[0,1].
\]
We also assume that \(\bm r\) is locally constant near the boundary and satisfies $ \bm r(0)= \left( \alpha_0, \alpha_1,\alpha_2  \right)  $ and $ \bm r(1)= \left( \beta_0, \beta_1, \beta_2 \right)  $.
Define the product measure \(\nu_{\bm r}^n\) on \(\Omega_n\) by
\[
\nu_{\bm r}^n(\eta)
=
\prod_{x=1}^{n-1}
\prod_{i=0}^2
r_i\left(\frac{x}{n}\right)^{\bm{1}_{\{ \eta_i(x)= 1\}}}.
\]

We first give the following Dirichlet form estimates.

\begin{lemma}\label{l4.1}
	There exist constants \( C,M>0 \) and \( N_0 \in \mathbb{N} \), such that, for every
	density \(f\) with respect to \(\nu_{\bm r}^n\),
	\[
	\left\langle L_n^\pm \sqrt f,\sqrt f\right\rangle_{\nu_{\bm r}^n}
	=
	-\frac12 D_{\nu_{\bm r}^n}^\pm(\sqrt f), \quad \forall  n \ge N_0,
	\]
	 \[
	 \left\langle L_n^s \sqrt f,\sqrt f\right\rangle_{\nu_{\bm r}^n}
	 \le
	 -MD_{\nu_{\bm r}^n}^s(\sqrt f)+\frac{C}{n}, \quad \forall n \ge 1,
	 \]
	 and
	 \[
	 \left\langle L_n^c \sqrt f,\sqrt f\right\rangle_{\nu_{\bm r}^n}
	 \le
	 Cn, \quad \forall n \ge 1.
	 \]
	 Here 
	 \begin{align*}
	 	 D_{\nu_{\bm r}^n}^-(\sqrt{f}) 
	 	 &= \frac{1}{n} \int \sum_{k=1}^{2} \left[ \alpha_k \eta_0^1 \left(  \sqrt{f}(\eta - \delta_0^1 + \delta_{k}^1 ) - \sqrt{f}(\eta) \right) ^2 \right. \\
	 	 &\quad \left. 
	 	 + \alpha_0 \eta_k^1 \left(  \sqrt{f}(\eta - \delta_{k}^1 + \delta_0^1 ) - \sqrt{f}(\eta) \right) ^2 \right] \,\mathrm d \nu_{\bm r}^n,
	 \end{align*}
	 \begin{align*}
	 	D_{\nu_{\bm r}^n}^+(\sqrt{f}) 
	 	&= \frac{1}{n} \int \sum_{k=1}^{2} \left[ \beta_k \eta_0^{n-1} \left(  \sqrt{f}(\eta - \delta_0^{n-1} + \delta_{k}^{n-1} ) - \sqrt{f}(\eta) \right) ^2 \right. \\
	 	&\quad \left. + \beta_0 \eta_k^{n-1} \left(  \sqrt{f}(\eta - \delta_{k}^{n-1} + \delta_0^{n-1} ) - \sqrt{f}(\eta) \right) ^2 \right] \,\mathrm d \nu_{\bm r}^n,
	 \end{align*}
	 and
	 \begin{align*}
	 	D_{\nu_{\bm r}^n}^s(\sqrt{f}) 
	 	= \int \sum_{x=1}^{n-2} \sum_{k,l=0}^{2} \eta_k^x \eta_l^{x+1}  
	 	\left(  \sqrt{f}(\eta -\delta_{k}^{x} + \delta_{l}^{x} + \delta_{k}^{x+1} - \delta_{l}^{x+1}) - \sqrt{f}(\eta) \right)^2 \,\mathrm d \nu_{\bm r}^n.
	 \end{align*}
\end{lemma}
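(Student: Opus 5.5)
The plan is to treat the three pieces of the generator separately, always using the elementary identity $\langle Lg,g\rangle_\nu = -\frac12 \int \sum c(\eta,\eta')(g(\eta')-g(\eta))^2\,d\nu + \frac12\int \sum c(\eta,\eta')\,(g(\eta')^2-g(\eta)^2)\,d\nu$, valid for any jump generator with rates $c(\eta,\eta')$ and any measure $\nu$. Write $g=\sqrt f$, so that $\int g^2\,d\nu_{\bm r}^n=1$. The first term is minus one half of the Dirichlet form. The second term vanishes exactly when $\nu$ is reversible for that piece, and is small when $\nu$ is only approximately reversible.

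\emph{Boundary part.} Near $x=1$ the profile $\bm r$ is locally constant and equals $(\alpha_0,\alpha_1,\alpha_2)$. Hence for $n\ge N_0$ the marginal of $\nu_{\bm r}^n$ at site $1$ is exactly $(\alpha_0,\alpha_1,\alpha_2)$, and the same holds at $x=n-1$ with $\bm\beta$. The left reservoir moves $0\to k$ at rate $\alpha_k/n$ and $k\to0$ at rate $\alpha_0/n$, which satisfies detailed balance for this marginal: $\alpha_0\cdot\alpha_k=\alpha_k\cdot\alpha_0$. The change of variables $\eta\mapsto\eta-\delta_0^1+\delta_k^1$ then cancels the second term, and the first term gives exactly $-\frac12 D^\pm$.

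\emph{Exchange part.} For the bond $(x,x+1)$ and the swap of labels $k,l$, perform the change of variables $\eta\mapsto\eta^{x,x+1}$. The Radon--Nikodym factor is
$$\frac{r_k(\tfrac{x+1}{n})\,r_l(\tfrac xn)}{r_k(\tfrac xn)\,r_l(\tfrac{x+1}{n})}=1+O(1/n),$$
by the Lipschitz property together with $r_i\ge a$. The antisymmetric term then becomes $\frac12\int c\,(g(\eta')^2-g(\eta)^2)\,d\nu$ with an $O(1/n)$ error weight per bond. Write $g(\eta')^2-g(\eta)^2=(g(\eta')-g(\eta))(g(\eta')+g(\eta))$ and apply Young's inequality with a parameter $\varepsilon$. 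This bounds the error term by $\varepsilon D^s + \frac{C}{\varepsilon n^2}\sum_x\int(g(\eta')^2+g(\eta)^2)\,d\nu$. The last sum is at most $C\varepsilon^{-1}n^{-2}\cdot n=C/n$, using $\int g^2\,d\nu=1$ and again the bounded density ratio. Choosing $\varepsilon=1/4$ gives the bound $-\frac14 D^s + C/n$, so $M=1/4$.

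\emph{Conversion part.} No sign is needed here. Use $|\langle L^c_n g,g\rangle|\le \int\sum_x\sum_{k,l}\gamma_k\eta_k^x\,|g(\eta')-g(\eta)|\,|g(\eta)|\,d\nu$, and bound this via $|ab|\le a^2+b^2$. The terms $\int g(\eta')^2\,d\nu$ are controlled by $C\int g^2\,d\nu$, since the density ratio for a site change $r_l/r_k$ is at most $b/a$. Each site therefore contributes $O(1)$, and summing over the $n$ sites gives $Cn$.

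The only step needing care is the exchange part: the error must be absorbed into $D^s$ with the correct powers of $n$, and the argument relies on $r_i$ being bounded below so that all density ratios are uniformly bounded.
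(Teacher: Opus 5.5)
Your proposal is correct and follows essentially the same route as the paper. For $n\ge N_0$, detailed balance of the site-$1$ and site-$(n-1)$ marginals removes the antisymmetric boundary term. In the exchange part, a change of variables puts an $O(1/n)$ weight $1-R$ (with $R$ the Radon--Nikodym factor of the swap) on $g(\eta')^2-g(\eta)^2$, and Young's inequality then absorbs it into $D^s$; your splitting gives $M=1/4$, while the paper's slightly different identity gives $M=1/8$. The conversion part is a crude bounded-density-ratio estimate, as in the paper.

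You should say explicitly that this weight comes from averaging the antisymmetric term with its image under the swap. A one-sided change of variables only gives $\frac12\int\sum_{x,k,l}\eta_k^x\eta_l^{x+1}g(\eta)^2(R-1)\,d\nu_{\bm r}^n$, which can be bounded merely by $O(1)$ rather than $O(1/n)$.
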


\begin{proof}
	
	We first prove the boundary identity. We only treat the left boundary; the right
	boundary is identical. Since \(\bm r\) is locally constant near the left boundary and
	\(\bm r(0)=\left( \alpha_0, \alpha_1,\alpha_2  \right)\), we have, for \(n\) sufficiently
	large,
	\[
	r_i\left(\frac1n\right)=\alpha_i,
	\qquad i=0,1,2.
	\]
	Consequently, the single-site marginal at \(x=1\) satisfies the detailed balance relation 
	\[ 
	\alpha_k r_0\left(\frac1n\right)
	=
	\alpha_0 r_k\left(\frac1n\right),
	\qquad k=1,2.
	\]
	 We have
\begin{align*}
	D_{\nu_{\bm r}^n}^-(\sqrt{f})
	&= \frac1n \int \sum_{k=1}^{2}  \left[  \alpha_k \eta_0^1 \sqrt{f}(\eta)\bigl(\sqrt{f} (\eta)-2\sqrt{f}(\eta-\delta_0^1 +\delta_k^1)\bigr) \right.\\
	&\left. \quad + \alpha_0 \eta_k^1 \sqrt{f}(\eta)\bigl(\sqrt{f}(\eta)-2\sqrt{f}(\eta-\delta_k^1 +\delta_0^1)\bigr) \right] \, \mathrm d\nu_{\bm r}^n \\
	&\quad + \frac1n \int \sum_{k=1}^{2} \Bigl[ \alpha_k \eta_0^1 f(\eta-\delta_0^1 +\delta_k^1) 
	+ \alpha_0 \eta_k^1 f(\eta-\delta_k^1 +  \delta_0^1 ) \Bigr]\, \mathrm d\nu_{\bm r}^n.
\end{align*}
Performing the changes of variables $ \eta = \xi + \delta_0^1 - \delta_k^1 $ and $ \eta = \zeta -\delta_0^1 + \delta_k^1 $, respectively, we obtain
\begin{align*}
	&\int \sum_{k=1}^{2} \Bigl[ \alpha_k \eta_0^1 f(\eta-\delta_0^1 +\delta_k^1) 
	+ \alpha_0 \eta_k^1 f(\eta-\delta_k^1 +  \delta_0^1 ) \Bigr]\, \mathrm d\nu_{\bm r}^n \\
	&  = \int \sum_{k=1}^{2} \alpha_k \xi_k^1 f(\xi)  \frac{r_0(1/n)}{r_k(1/n)}\, \mathrm d\nu_{\bm r}^n
	+ \int \sum_{k=1}^{2} \alpha_0 \zeta_0^1 f(\zeta)  \frac{r_k(1/n)}{r_0(1/n)}\, \mathrm d\nu_{\bm r}^n \\
	&  = \int \sum_{k=1}^{2} \bigl[ \alpha_0 \eta_k^1 + \alpha_k \eta_0^1 \bigr] f(\eta)\, \mathrm d\nu_{\bm r}^n.
\end{align*}
Consequently,
\begin{align*}
D_{\nu_{\bm r}^n}^-(\sqrt{f})
 &= \frac{2}{n} \int \sum_{k=1}^{2} \left[  \alpha_k \eta_0^1 \bigl(\sqrt{f}(\eta)-\sqrt{f}( \eta - \delta_0^1 +\delta_k^1)\bigr) \right. \\ 
 & \left. \quad + \alpha_0 \eta_k^1 \bigl(\sqrt{f}(\eta)-\sqrt{f}(\eta-\delta_k^1 + \delta_0^1 )\bigr) \right]  \sqrt{f}(\eta)\, \mathrm d\nu_{\bm r}^n
= -2 \langle L_n^- \sqrt{f}, \sqrt{f} \rangle_{ \nu_{\bm r}^n}.	 
\end{align*}
The proof for \(L_n^+\) is the same, using the relations at the right boundary.

 We now consider the conservative part. For \(1\le x\le n-2\) and \(k,l\in\{0,1,2\}\), we write
 \[
 \eta^{x,x+1}_{k,l}
 =
 \eta-\delta_k^x+\delta_l^x+\delta_k^{x+1}-\delta_l^{x+1},
 \]
 and 
 \[
 R_{k,l}^{x,x+1}
 =
 \frac{
 	r_k(x/n)r_l((x+1)/n)
 }{
 	r_l(x/n)r_k((x+1)/n)
 }.
 \]
 Using the identity
 \[
 \sqrt a(\sqrt b-\sqrt a)
 =
 -\frac14(\sqrt b-\sqrt a)^2
 +\frac14 b-\frac14 a
 +\frac12\sqrt a(\sqrt b-\sqrt a),
 \]
 with \(a=f(\eta)\) and \(b=f(\eta_{k,l}^{x,x+1})\), we obtain
 \begin{align*}
 	\left\langle L_n^s \sqrt f,\sqrt f\right\rangle_{\nu_{\bm r}^n}
 	&=  \int   \sum_{x=1}^{n-2} \sum_{k,l=0}^{2} \eta_k^x \eta_l^{x+1} 
 	\bigl[ \sqrt{f}(\eta_{k,l}^{x,x+1}) - \sqrt{f}(\eta)  \bigr]\sqrt{f}(\eta) \,  \mathrm d\nu_{\bm r}^n \\
 	&=
 	-\frac14D_{\nu_{\bm r}^n}^s(\sqrt{f})    
 	+\frac14
 	\int
 	\sum_{x=1}^{n-2}
 	\sum_{k,l=0}^2
 	\eta_k^x\eta_l^{x+1}
 	\left[
 	f(\eta_{k,l}^{x,x+1})-f(\eta)
 	\right]
 	\,\mathrm d\nu_{\bm r}^n     \\
 	&\quad
 	+\frac12
 	\int
 	\sum_{x=1}^{n-2}
 	\sum_{k,l=0}^2
 	\eta_k^x\eta_l^{x+1}
 	\sqrt{f}(\eta)
 	\left[
 	\sqrt{f}(\eta_{k,l}^{x,x+1})-\sqrt{f}(\eta)
 	\right]
 	\,\mathrm d\nu_{\bm r}^n .
 \end{align*}
 For the last term, we perform the change of variables $ \eta^{x,x+1}_{k,l} = \xi $. Then
 \[
 \begin{aligned}
 	&\frac12
 	\int
 	\sum_{x=1}^{n-2}
 	\sum_{k,l=0}^2
 	\eta_k^x\eta_l^{x+1}
 	\sqrt{f}(\eta)
 	\left[
 	\sqrt{f}(\eta_{k,l}^{x,x+1})-\sqrt{f}(\eta)
 	\right]
 	\,\mathrm d\nu_{\bm r}^n       \\
 	&= \frac12
 	\int
 	\sum_{x=1}^{n-2}
 	\sum_{k,l=0}^2
 	\eta_k^x\eta_l^{x+1} 
 	\left[ \sqrt{f}(\eta)
 	\sqrt{f}\left(
 	\eta_{k,l}^{x,x+1}
 	\right)
 	-
 	f(\eta_{k,l}^{x,x+1})
 	\right]R_{k,l}^{x,x+1}
 	\,\mathrm d\nu_{\bm r}^n \\
 	&=-\frac14
 	\int
 	\sum_{x=1}^{n-2}
 	\sum_{k,l=0}^2
 	\eta_k^x\eta_l^{x+1}
 	\left[
 	\sqrt{f}(\eta_{k,l}^{x,x+1})-\sqrt{f}(\eta)
 	\right]^2
 	R_{k,l}^{x,x+1}
 	\,\mathrm d\nu_{\bm r}^n      \\
 	&\quad
 	+\frac14
 	\int
 	\sum_{x=1}^{n-2}
 	\sum_{k,l=0}^2
 	\eta_k^x\eta_l^{x+1}
 	\left[
 	f(\eta) - f(\eta_{k,l}^{x,x+1}) 
 	\right]
 	R_{k,l}^{x,x+1}
 	\,\mathrm d\nu_{\bm r}^n .
 \end{aligned}
 \]
 Consequently,
 \[
 \begin{aligned}
 	 \left\langle L_n^s \sqrt{f},\sqrt{f}\right\rangle_{\nu_{\bm r}^n}  
 	&\le
 	- \frac{1}{4} \int
 	\sum_{x=1}^{n-2}
 	\sum_{k,l=0}^2
 	\eta_k^x\eta_l^{x+1}
 	\left[
 	\sqrt{f}(\eta_{k,l}^{x,x+1})-\sqrt{f}(\eta)
 	\right]^2
 	\left[ 1+R_{k,l}^{x,x+1} \right]  
 	\,\mathrm d\nu_{\bm r}^n  \\
 	&\quad 
 	+\frac14
 	\int
 	\sum_{x=1}^{n-2}
 	\sum_{k,l=0}^2
 	\eta_k^x\eta_l^{x+1}
 	\left[
 	f(\eta_{k,l}^{x,x+1})-f(\eta)
 	\right]
 	\left[
 	1-R_{k,l}^{x,x+1}
 	\right]
 	\,\mathrm d\nu_{\bm r}^n .
 \end{aligned}
 \]
 Using the algebraic identity $b-a=(\sqrt{b}-\sqrt{a})(\sqrt{b}+\sqrt{a})$ and Young's inequality, we obtain
 \begin{align*}
 	&\frac14
 	\int
 	\sum_{x=1}^{n-2}
 	\sum_{k,l=0}^{2}
 	\eta_{k}^{x}\eta_{l}^{x+1}
 	\Bigl[
 	f\bigl(\eta_{k,l}^{x,x+1}\bigr)-f(\eta)
 	\Bigr]
 	\Bigl[
 	1-R_{k,l}^{x,x+1}
 	\Bigr]
 	\,\mathrm d\nu_{\bm r}^{n}
 	\\
 	&\le
 	\frac18
 	\int
 	\sum_{x=1}^{n-2}
 	\sum_{k,l=0}^{2}
 	\eta_{k}^{x}\eta_{l}^{x+1}
 	\Bigl[
 	\sqrt{f}\bigl(\eta_{k,l}^{x,x+1}\bigr)-\sqrt{f}(\eta)
 	\Bigr]^{2}
 	\,\mathrm d\nu_{\bm r}^{n}
 	\\
 	&\quad
 	+
 	\frac14
 	\int
 	\sum_{x=1}^{n-2}
 	\sum_{k,l=0}^{2}
 	\eta_{k}^{x}\eta_{l}^{x+1}
 	\Bigl[
 	f\bigl(\eta_{k,l}^{x,x+1}\bigr)+f(\eta)
 	\Bigr]
 	\Bigl[
 	1-R_{k,l}^{x,x+1}
 	\Bigr]^{2}
 	\,\mathrm d\nu_{\bm r}^{n}.
 \end{align*}
  By the Lipschitz continuity of \(\bm r\) and the uniform lower bound \(r_i \ge a\),
 \[
 \left|1-R_{k,l}^{x,x+1}\right|
 \le
 \frac{C}{n}.
 \]
 Furthermore, since \(f\) is a density with respect to \(\nu_{\bm r}^n\) and $ 0 \le \eta_k^x \eta_l^{x+1} \le 1 $, the last integral is bounded by \(C/n\). Hence
 \[
 \left\langle L_n^s\sqrt f,\sqrt f\right\rangle_{\nu_{\bm r}^n}
 \le
 -MD_{\nu_{\bm r}^n}^s(\sqrt f)+\frac{C}{n}
 \]
 for some \(M>0\).
 
 Finally, for the conversion part, observe that only \(O(n)\) sites are involved and
 all conversion rates are uniformly bounded. Since the one-site Radon-Nikodym
 ratios are uniformly bounded by the assumptions on \(\bm r\), we have
 \[
 \left\langle L_n^c\sqrt f,\sqrt f\right\rangle_{\nu_{\bm r}^n}
 \le
 Cn.
 \]
 The proof is complete.

\end{proof}

We now introduce some notation for the local averages. For \(0<\varepsilon<1/2\), define
\[
\Sigma_n^{\varepsilon,L}
:=
\{1,\cdots,\lfloor\varepsilon(n-1)\rfloor\},
\qquad
\Sigma_n^{\varepsilon,R}
:=
\{n-1-\lfloor\varepsilon(n-1)\rfloor,\cdots,n-1\}.
\]
For \(\ell\in\mathbb N\), define the right and left local averages by
\[
\overrightarrow\eta_k^{\,\ell}(x)
=
\frac1\ell
\sum_{y=x+1}^{x+\ell}\eta_k^y,
\qquad
\overleftarrow\eta_k^{\,\ell}(x)
=
\frac1\ell
\sum_{y=x-\ell}^{x-1}\eta_k^y.
\]
These quantities are used only when the corresponding averaging boxes are contained
in \(\Sigma_n\). For a configuration \(\eta\in\Omega_n\) and $ x \in \Sigma_{n} $, we define the translation by $x$ of $ \eta $ as $ (\tau_x \eta)(y)=\eta(x+y)$.

\begin{lemma}[Replacement lemma]\label{l4.2}
	Let \(k\in\{1,2\}\). Fix \(x\notin\Sigma_n^{\varepsilon,R}\), and let
	\(\psi:\Omega_n\to\mathbb R\) be a uniformly bounded function whose support does not
	intersect the averaging box $ \{x+1,\ldots,x+\lfloor\varepsilon n\rfloor\} $.
	Then, for every \(t\in[0,T]\),
	\[
	\lim_{\varepsilon\downarrow0}
	\limsup_{n\to\infty}
	\mathbb E_{\mu^n}^n
	\left[
	\left|
	\int_0^t
	\psi(\tau_x\eta_s)
	\left(
	\eta_k^x(s)
	-
	\overrightarrow\eta_k^{\,\lfloor\varepsilon n\rfloor}(x,s)
	\right)
	\,\mathrm ds
	\right|
	\right]
	=0.
	\]
	The analogous estimate holds for the left average
	\(\overleftarrow\eta_k^{\,\ell}(x)\).
\end{lemma}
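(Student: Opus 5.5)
We will follow the standard entropy/variational route for replacement lemmas in slow-boundary exclusion processes (as in \cite{ref2,ref3}), with the product measure $\nu_{\bm r}^n$ as reference and Lemma \ref{l4.1} as the Dirichlet-form input.

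\emph{Step 1: change of reference measure.} Since $\nu_{\bm r}^n$ has one-site marginals bounded below by $a>0$, the relative entropy satisfies $H(\mu^n\,|\,\nu_{\bm r}^n)\le Cn$. By the entropy inequality and Jensen's inequality, the expectation in the statement is bounded, for every $B>0$, by
\[
\frac{C}{B}+\frac{1}{Bn}\log \mathbb E^n_{\nu_{\bm r}^n}\Bigl[\exp\Bigl\{Bn\Bigl|\int_0^t \psi(\tau_x\eta_s)\bigl(\eta_k^x(s)-\overrightarrow\eta_k^{\,\lfloor\varepsilon n\rfloor}(x,s)\bigr)\,\mathrm ds\Bigr|\Bigr\}\Bigr].
\]
We remove the absolute value with $e^{|z|}\le e^z+e^{-z}$. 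The Feynman--Kac formula then bounds the second term by $t$ times
\[
\sup_f\Bigl\{\int \psi(\tau_x\eta)\bigl(\eta_k^x-\overrightarrow\eta_k^{\,\ell}(x)\bigr) f\,\mathrm d\nu_{\bm r}^n+\frac{1}{Bn}\langle L_n\sqrt f,\sqrt f\rangle_{\nu_{\bm r}^n}\Bigr\},
\]
where $\ell=\lfloor\varepsilon n\rfloor$ and the supremum runs over densities $f$ with respect to $\nu_{\bm r}^n$.

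By Lemma \ref{l4.1}, the boundary terms are nonpositive and the conversion term is at most $Cn$. Hence
\[
\frac{1}{Bn}\langle L_n\sqrt f,\sqrt f\rangle\le -\frac{Mn}{B}D^s_{\nu_{\bm r}^n}(\sqrt f)+\frac{C}{B}+\frac{C}{Bn}.
\]
The factor $n^2$ in front of $L_n^s$ is what produces the gain $n/B$.

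\emph{Step 2: telescoping and bond exchange.} We write
\[
\eta_k^x-\overrightarrow\eta_k^{\,\ell}(x)=\frac1\ell\sum_{y=x+1}^{x+\ell}\sum_{z=x}^{y-1}\bigl(\eta_k^z-\eta_k^{z+1}\bigr).
\]
For each bond $(z,z+1)$ in the box, we express $\eta_k^z-\eta_k^{z+1}$ using the exchange $\eta\mapsto\eta^{z,z+1}_{k,l}$: the term $\int \psi(\tau_x\eta)(\eta_k^z-\eta_k^{z+1})f\,\mathrm d\nu_{\bm r}^n$ is rewritten by a change of variables as half the integral of $\psi(\tau_x\eta)\,\eta_k^z\eta_l^{z+1}\bigl(f(\eta)-f(\eta^{z,z+1}_{k,l})\bigr)$, summed over $l\neq k$. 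The support assumption on $\psi$ guarantees $\psi$ is invariant under these exchanges, since every bond lies inside the averaging box, except for $z=x$, which touches the site $x$ itself. If $\psi$ depends on site $x$, we instead use that the exchange changes $\psi$ only at $x$; but it is cleaner to note that the statement's support condition should be read as $\psi$ not depending on sites $x,\dots,x+\ell$, which is how $\psi$ is used later.

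The change of variables produces an error from the Radon--Nikodym factor $R^{z,z+1}_{k,l}=1+O(1/n)$, which is bounded by $C/n$ per bond. Writing $f(\eta)-f(\eta')=(\sqrt f(\eta)-\sqrt f(\eta'))(\sqrt f(\eta)+\sqrt f(\eta'))$ and applying Young's inequality with weight $A>0$, each bond contributes at most
\[
\frac{A}{2}\int \eta_k^z\eta_l^{z+1}\bigl(\sqrt f(\eta^{z,z+1}_{k,l})-\sqrt f(\eta)\bigr)^2\,\mathrm d\nu_{\bm r}^n+\frac{C}{A}+\frac{C}{n}.
\]
Each bond $z$ appears in at most $\ell$ of the inner sums, and the outer average carries the factor $1/\ell$. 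Therefore the total is at most
\[
\frac{A}{2}D^s_{\nu_{\bm r}^n}(\sqrt f)+\frac{C\ell}{A}+\frac{C\ell}{n}.
\]

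\emph{Step 3: optimization and main obstacle.} We choose $A=2Mn/B$, so the Dirichlet form is absorbed by the negative term from Step 1. What remains is bounded by
\[
\frac{C}{B}+C\frac{B\ell}{n}+C\frac{\ell}{n}+\frac{C}{Bn}\le \frac{C}{B}+CB\varepsilon+C\varepsilon.
\]
Letting $n\to\infty$, then $\varepsilon\downarrow0$, then $B\to\infty$ gives the claim. The left average is handled identically, using the box $\{x-\ell,\dots,x-1\}$ together with $x\notin\Sigma_n^{\varepsilon,L}$.

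The step we expect to be most delicate is Step 2. The change of variables under the non-product-invariant measure $\nu_{\bm r}^n$ must be carried out for the three-state exchanges, using $\eta_k^z-\eta_k^{z+1}=\sum_{l\neq k}\bigl(\eta_k^z\eta_l^{z+1}-\eta_l^z\eta_k^{z+1}\bigr)$. We also need to check that the $O(1/n)$ Radon--Nikodym errors, summed over $\ell$ bonds, and the $Cn$ loss from the conversion generator, once divided by $Bn$, both stay of order $C(\varepsilon+1/B)$. This works only because conversion enters $L_n$ without the $n^2$ factor.
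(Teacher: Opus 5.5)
Your proposal is correct and follows essentially the same route as the paper: the bound $H(\mu^n\,|\,\nu_{\bm r}^n)\le Cn$, Feynman--Kac combined with Lemma~\ref{l4.1}, telescoping over the bonds of the averaging box, a change of variables with an $O(1/n)$ Radon--Nikodym error, Young's inequality with weight tuned to cancel $-\frac{Mn}{B}D^s_{\nu_{\bm r}^n}(\sqrt f)$ (the paper's choice $\sigma=\theta/(4nM)$), and the limits $n\to\infty$, $\varepsilon\downarrow0$, $B\to\infty$. Your remark about the bond $(x,x+1)$ is well taken, and the stronger reading is necessary rather than merely cleaner: the paper's step $\psi(\tau_x\eta)=\psi(\tau_x\xi)$ also tacitly needs $\psi(\tau_x\cdot)$ to be independent of the site $x$, and without this the statement fails (for example, $\psi(\tau_x\eta)=\eta_k^x$ turns the integrand into $\eta_k^x\bigl(1-\overrightarrow\eta_k^{\,\ell}(x)\bigr)\ge0$, whose expectation stays near $\rho_k(1-\rho_k)$), so your first fallback of tracking how $\psi$ changes at $x$ cannot be made to work.
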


\begin{proof}
	The proof follows the strategy of Lemma~E.1 in \cite{ref6}. We prove the assertion for the right average; the proof for the left average is identical.
	Let $ \ell=\lfloor\varepsilon n\rfloor $ and  
	\[
	W(\eta)
	=
	\psi(\tau_x\eta)
	\left(
	\eta_k^x-\overrightarrow\eta_k^{\,\ell}(x)
	\right).
	\]
	
     By the entropy inequality, for every \(\theta>0\),
     \[
     \begin{aligned}
     	\mathbb E_{\mu^n}^n
     	\left[
     	\left|
     	\int_0^t W(\eta_s)\,\mathrm ds
     	\right|
     	\right]
     	&\le
     	\frac{H(\mu^n\,|\,\nu_{\bm r}^n)}{\theta n}
     	+
     	\frac1{\theta n}
     	\log
     	\mathbb E_{\nu_{\bm r}^n}
     	\left[
     	\exp\left\{
     	\theta n
     	\left|
     	\int_0^t W(\eta_s)\,\mathrm ds
     	\right|
     	\right\}
     	\right]      \\
     	&\le
     	\frac{H(\mu^n\,|\,\nu_{\bm r}^n)}{\theta n}
     	+
     	\frac1{\theta n}
     	\left(
     	\log2
     	+
     	\max_{\pm}
     	\log
     	\mathbb E_{\nu_{\bm r}^n}
     	\left[
     	\exp\left\{
     	\pm\theta n
     	\int_0^t W(\eta_s)\,\mathrm ds
     	\right\}
     	\right]
     	\right),
     \end{aligned}
     \]
     where we used the elementary bounds \(e^{|a|}\le e^a+e^{-a}\) and $ \log (a+b) \le \log 2 + \max \{ \log a, \log b \} $.
     By the Feynman-Kac formula,
     \[
     \begin{aligned}
     	&\mathbb E_{\mu^n}^n
     	\left[
     	\left|
     	\int_0^t W(\eta_s)\,\mathrm ds
     	\right|
     	\right] \\
     	&\le
     	\frac{H(\mu^n\,|\,\nu_{\bm r}^n)}{\theta n}
     	+
     	\frac1{\theta n}
     	\left(
     	\log2
     	+
     	\max_{\pm}
     	\int_0^t
     	\sup_f
     	\left\{
     	\pm\theta n
     	\int W(\eta)f(\eta)\,\mathrm d\nu_{\bm r}^n
     	+
     	\left\langle L_n\sqrt f,\sqrt f\right\rangle_{\nu_{\bm r}^n}
     	\right\}
     	\mathrm ds
     	\right),
     \end{aligned}
     \]
     where the supremum is taken over all densities \(f\) with respect to
     \(\nu_{\bm r}^n\).	
     
	Since \(\mu^n\) is a probability measure, we have
	\[
	\begin{aligned}
		H(\mu^n\,|\,\nu_{\bm r}^n)
		 =
		\sum_{\eta\in\Omega_n}
		\mu^n(\eta)
		\log\left(
		\frac{\mu^n(\eta)}{\nu_{\bm r}^n(\eta)}
		\right)      
		 \le
		\max_{\eta\in\Omega_n}
		\left\{
		\log\left[
		\nu_{\bm r}^n(\eta)
		\right]^{-1}
		\right\}.
	\end{aligned}
	\]
	By $ 0 < a\le r_i(u)\le b < 1 $, we have
	\[
	\begin{aligned}
		\log\left[
		\nu_{\bm r}^n(\eta)
		\right]^{-1}
		&=
		\sum_{x=1}^{n-1}
		\sum_{i=0}^2
		\mathbf 1_{\{\eta_i^x=1\}}
		\log
		r_i\left(\frac{x}{n}\right)^{-1}
		\le
		Cn.
	\end{aligned}
	\]
	Thus
	\[
	H(\mu^n\,|\,\nu_{\bm r}^n)\le Cn.
	\]
	Using Lemma~\ref{l4.1}, we obtain
	\[
	\begin{aligned}
		\mathbb E_{\mu^n}^n
		\left[
		\left|
		\int_0^t W(\eta_s)\,\mathrm ds
		\right|
		\right]
		&\le
		\frac{C}{\theta}
		+
		\max_{\pm}
		\int_0^t
		\sup_f
		\Bigg\{
		\pm
		\int W(\eta)f(\eta)\,\mathrm d\nu_{\bm r}^n       \\
		&\quad
		-
		\frac{nM}{\theta}
		D_{\nu_{\bm r}^n}^s(\sqrt f)
		-
		\frac{n}{2\theta}
		D_{\nu_{\bm r}^n}^+(\sqrt f)
		-
		\frac{n}{2\theta}
		D_{\nu_{\bm r}^n}^-(\sqrt f)
		\Bigg\}
		\,\mathrm ds.
	\end{aligned}
	\]

	It remains to estimate $ \int W(\eta)f(\eta)\,\mathrm d\nu_{\bm r}^n $. Note that
	\[
	\begin{aligned}
		\int W(\eta)f(\eta)\,d\nu_{\bm r}^n
		&=
		\frac1\ell
		\sum_{y=x+1}^{x+\ell}
		\sum_{w=x}^{y-1}
		\int
		\left(
		\eta_k^w-\eta_k^{w+1}
		\right)
		\psi(\tau_x\eta)f(\eta)
		\,\mathrm d\nu_{\bm r}^n.
	\end{aligned}
	\]
	Since $ \eta_0^x+\eta_1^x+\eta_2^x=1  $ for every $ x \in\Sigma_n $, we have
	\[
	\eta_k^w-\eta_k^{w+1}
	=
	\eta_k^w\sum_{\substack{l=0\\l\ne k}}^2\eta_l^{w+1}
	-
	\eta_k^{w+1}\sum_{\substack{l=0\\l\ne k}}^2\eta_l^w.
	\]
	For \(l\ne k\), define $ \eta_{k,l}^{w,w+1}
	=
	\eta-\delta_k^w+\delta_l^w+\delta_k^{w+1}-\delta_l^{w+1} $.
	Then
	\[
	\begin{aligned}
		 \int
		\left(
		\eta_k^w-\eta_k^{w+1}
		\right)
		\psi(\tau_x\eta)f(\eta)
		\,\mathrm d\nu_{\bm r}^n        
		&=
		\frac12
		\int
		\sum_{\substack{l=0\\l\ne k}}^2
		\psi(\tau_x\eta)
		\eta_k^w\eta_l^{w+1}
		\left[
		f(\eta)-f(\eta_{k,l}^{w,w+1})
		\right]
		\,\mathrm d\nu_{\bm r}^n       \\
		&\quad
		+
		\frac12
		\int
		\sum_{\substack{l=0\\l\ne k}}^2
		\psi(\tau_x\eta)
		\eta_k^w\eta_l^{w+1}
		\left[
		f(\eta)+f(\eta_{k,l}^{w,w+1})
		\right]
		\,\mathrm d\nu_{\bm r}^n       \\
		&\quad
		-
		\frac12
		\int
		\sum_{\substack{l=0\\l\ne k}}^2
		\psi(\tau_x\eta)
		\eta_k^{w+1}\eta_l^w
		\left[
		f(\eta)-f(\eta_{l,k}^{w,w+1})
		\right]
		\,\mathrm d\nu_{\bm r}^n       \\
		&\quad
		-
		\frac12
		\int
		\sum_{\substack{l=0\\l\ne k}}^2
		\psi(\tau_x\eta)
		\eta_k^{w+1}\eta_l^w
		\left[
		f(\eta)+f(\eta_{l,k}^{w,w+1})
		\right]
		\,\mathrm d\nu_{\bm r}^n       \\
		&=:A_1+A_2+A_3+A_4.
	\end{aligned}
	\]
	We first estimate \(A_2+A_4\). We perform respectively the changes of variables $ \eta_{k,l}^{w,w+1} = \xi $ and $ \eta_{l,k}^{w,w+1} = \zeta $.
	Since the support of \(\psi\) does not intersect the averaging box
	\(\{x+1,\ldots,x+\ell\}\), we have
	\[
	\psi(\tau_x\eta)=\psi(\tau_x\xi),
	\qquad
	\psi(\tau_x\eta)=\psi(\tau_x\zeta).
	\]
	Hence
	\begin{align*}
		A_2+A_4
		&=
		\frac12
		\int
		\sum_{\substack{l=0\\l\ne k}}^2
		\psi(\tau_x\eta)
		\eta_k^w\eta_l^{w+1}
		f(\eta)
		\,\mathrm d\nu_{\bm r}^n       
		-
		\frac12
		\int
		\sum_{\substack{l=0\\l\ne k}}^2
		\psi(\tau_x\eta)
		\eta_k^{w+1}\eta_l^w
		f(\eta)
		\,\mathrm d\nu_{\bm r}^n       \\
		&\quad
		+
		\frac12
		\sum_{\substack{l=0\\l\ne k}}^2
		\int
		\psi(\tau_x\xi)
		\xi_l^w\xi_k^{w+1}
		f(\xi)
		\frac{
			r_k(w/n)r_l((w+1)/n)
		}{
			r_l(w/n)r_k((w+1)/n)
		}
		\,\mathrm d\nu_{\bm r}^n       \\
		&\quad
		-
		\frac12
		\sum_{\substack{l=0\\l\ne k}}^2
		\int
		\psi(\tau_x\zeta)
		\zeta_l^{w+1}\zeta_k^w
		f(\zeta)
		\frac{
			r_l(w/n)r_k((w+1)/n)
		}{
			r_k(w/n)r_l((w+1)/n)
		}
		\,\mathrm d\nu_{\bm r}^n       \\
		&=
		\frac12
		\sum_{\substack{l=0\\l\ne k}}^2
		\int
		\psi(\tau_x\eta)
		\eta_k^w\eta_l^{w+1}
		f(\eta)
		\left[
		1-
		\frac{
			r_l(w/n)r_k((w+1)/n)
		}{
			r_k(w/n)r_l((w+1)/n)
		}
		\right]
		\,\mathrm d\nu_{\bm r}^n       \\
		&\quad
		+
		\frac12
		\sum_{\substack{l=0\\l\ne k}}^2
		\int
		\psi(\tau_x\eta)
		\eta_k^{w+1}\eta_l^w
		f(\eta)
		\left[
		\frac{
			r_k(w/n)r_l((w+1)/n)
		}{
			r_l(w/n)r_k((w+1)/n)
		}
		-1
		\right]
		\,\mathrm d\nu_{\bm r}^n .
	\end{align*}
    Since \(\bm r\) is Lipschitz continuous and uniformly bounded away from zero,
    \[
    \left|
    1-
    \frac{
    	r_l(w/n)r_k((w+1)/n)
    }{
    	r_k(w/n)r_l((w+1)/n)
    }
    \right|
    \le
    \frac{C}{n}.
    \]
    Therefore,
    \[
    |A_2+A_4|
    \le
    \frac{C}{n}\|\psi\|_\infty.
    \]
	
	 We now estimate \(A_1+A_3\). Using $ a-b=(\sqrt a-\sqrt b)(\sqrt a+\sqrt b) $, we have
	 \[
	 \begin{aligned}
	 	A_1+A_3
	 	&=
	 	\frac12
	 	\int
	 	\sum_{\substack{l=0\\l\ne k}}^2
	 	\psi(\tau_x\eta)
	 	\eta_k^w\eta_l^{w+1}
	 	\left[
	 	\sqrt f(\eta)-\sqrt f(\eta_{k,l}^{w,w+1})
	 	\right]   
	 	\left[
	 	\sqrt f(\eta)+\sqrt f(\eta_{k,l}^{w,w+1})
	 	\right]
	 	\,\mathrm d\nu_{\bm r}^n       \\
	 	&\quad
	 	-
	 	\frac12
	 	\int
	 	\sum_{\substack{l=0\\l\ne k}}^2
	 	\psi(\tau_x\eta)
	 	\eta_k^{w+1}\eta_l^w
	 	\left[
	 	\sqrt f(\eta)-\sqrt f(\eta_{l,k}^{w,w+1})
	 	\right]        
	 	\left[
	 	\sqrt f(\eta)+\sqrt f(\eta_{l,k}^{w,w+1})
	 	\right]
	 	\,\mathrm d\nu_{\bm r}^n .
	 \end{aligned}
	 \]
	 By Young's inequality, for every \(\sigma>0\), 
	 we obtain
	 \[
	 \begin{aligned}
	 	A_1+A_3
	 	&\le
	 	\frac1{4\sigma}
	 	\sum_{\substack{l=0\\l\ne k}}^2
	 	\int
	 	\eta_k^w\eta_l^{w+1}
	 	\left[
	 	\sqrt f(\eta)-\sqrt f(\eta_{k,l}^{w,w+1})
	 	\right]^2
	 	\,\mathrm d\nu_{\bm r}^n       \\
	 	&\quad
	 	+
	 	\frac{\sigma}{2}
	 	\sum_{\substack{l=0\\l\ne k}}^2
	 	\int
	 	\psi(\tau_x\eta)^2
	 	\eta_k^w\eta_l^{w+1}
	 	\left[
	 	f(\eta)+f(\eta_{k,l}^{w,w+1})
	 	\right]
	 	\,\mathrm d\nu_{\bm r}^n       \\
	 	&\quad
	 	+
	 	\frac1{4\sigma}
	 	\sum_{\substack{l=0\\l\ne k}}^2
	 	\int
	 	\eta_l^w\eta_k^{w+1}
	 	\left[
	 	\sqrt f(\eta)-\sqrt f(\eta_{l,k}^{w,w+1})
	 	\right]^2
	 	\,\mathrm d\nu_{\bm r}^n       \\
	 	&\quad
	 	+
	 	\frac{\sigma}{2}
	 	\sum_{\substack{l=0\\l\ne k}}^2
	 	\int
	 	\psi(\tau_x\eta)^2
	 	\eta_l^w\eta_k^{w+1}
	 	\left[
	 	f(\eta)+f(\eta_{l,k}^{w,w+1})
	 	\right]
	 	\,\mathrm d\nu_{\bm r}^n .
	 \end{aligned}
	 \]
	 Consequently,
	 \[
	 \begin{aligned}
	 	&\int
	 	\left(
	 	\eta_k^w-\eta_k^{w+1}
	 	\right)
	 	\psi(\tau_x\eta)f(\eta)
	 	\,\mathrm d\nu_{\bm r}^n      \\
	 	&\le
	 	\frac1{4\sigma}
	 	\sum_{\substack{l=0\\l\ne k}}^2
	 	\left(
	 	D_{k,l}^{w,w+1}(\sqrt f)
	 	+
	 	D_{l,k}^{w,w+1}(\sqrt f)
	 	\right)
	 	+
	 	C\sigma\|\psi\|_\infty^2
	 	+
	 	\frac{C}{n}\|\psi\|_\infty,
	 \end{aligned}
	 \]
	 where
	 \[
	 D_{k,l}^{w,w+1}(\sqrt f)
	 =
	 \int
	 \eta_k^w\eta_l^{w+1}
	 \left[
	 \sqrt f(\eta)-\sqrt f(\eta_{k,l}^{w,w+1})
	 \right]^2
	 \,\mathrm d\nu_{\bm r}^n .
	 \]
	 Combining the above estimate, we obtain
	 \begin{align*}
	 	&\pm
	 	\int W(\eta)f(\eta)\,\mathrm d\nu_{\bm r}^n
	 	-
	 	\frac{nM}{\theta}
	 	D_{\nu_{\bm r}^n}^s(\sqrt f)
	 	-
	 	\frac{n}{2\theta}
	 	D_{\nu_{\bm r}^n}^+(\sqrt f)
	 	-
	 	\frac{n}{2\theta}
	 	D_{\nu_{\bm r}^n}^-(\sqrt f)      \\
	 	&\le
	 	\pm
	 	\frac1\ell
	 	\sum_{y=x+1}^{x+\ell}
	 	\sum_{w=x}^{y-1}
	 	\left[
	 	\frac1{4\sigma}
	 	\sum_{\substack{l=0\\l\ne k}}^2
	 	\left(
	 	D_{k,l}^{w,w+1}(\sqrt f)
	 	+
	 	D_{l,k}^{w,w+1}(\sqrt f)
	 	\right)
	 	\right]        
	 	-
	 	\frac{nM}{\theta}
	 	D_{\nu_{\bm r}^n}^s(\sqrt f)
	 	+
	 	C\sigma\ell
	 	+
	 	\frac{C\ell}{n}        \\
	 	&\le
	 	\left(
	 	\frac1{4\sigma}
	 	-
	 	\frac{nM}{\theta}
	 	\right)
	 	D_{\nu_{\bm r}^n}^s(\sqrt f)
	 	+
	 	C\sigma\ell
	 	+
	 	\frac{C\ell}{n}.
	 \end{align*}
	 Choosing $ \sigma=\frac{\theta}{4nM} $, 
	 we obtain
	 \[
	 \limsup_{n\to\infty}
	 \mathbb E_{\mu^n}^n
	 \left[
	 \left|
	 \int_0^t W(\eta_s)\,\mathrm ds
	 \right|
	 \right]
	 \le
	 \frac{C}{\theta}
	 +
	 C\theta\varepsilon
	 +
	 C\varepsilon.
	 \]
	 Letting first \(\varepsilon\downarrow0\) and then \(\theta\to\infty\), we conclude that
	 \[
	 \lim_{\varepsilon\downarrow0}
	 \limsup_{n\to\infty}
	 \mathbb E_{\mu^n}^n
	 \left[
	 \left|
	 \int_0^t W(\eta_s)\,\mathrm ds
	 \right|
	 \right]
	 =
	 0.
	 \]
	 This proves the replacement lemma for the right average. The proof for the left
	 average is identical.

\end{proof}

\section{Proof of Theorem~\ref{th2.2}}

Let \(k \in \{1,2 \} \), and let $ f_k :[0,T]\times[0,1]\to\mathbb R $
be a smooth time-dependent test function. We first write the Dynkin martingale
associated with the \(k\)-th fluctuation field:
\[
M_k^{n,t}(f_k^\cdot)
=
Y_k^{n,t}(f_k^t)
-
Y_k^{n,0}(f_k^0)
-
\int_0^t
(\partial_s+L_n)Y_k^{n,s}(f_k^s)\,\mathrm ds .
\]
For \(k,l\in\{1,2\}\), the corresponding Doob martingale is
\[
N_{k,l}^{n,t}(f_k,f_l)
=
M_k^{n,t}(f_k)M_l^{n,t}(f_l)
-
\int_0^t
\Gamma_n
\left(
Y_k^{n,s}(f_k^s),
Y_l^{n,s}(f_l^s)
\right)
\,\mathrm ds,
\]
where the carr\'e du champ operator is defined by
\[
\Gamma_n(F,G)
=
L_n(FG)-F L_nG-G L_nF .
\]
Consequently,
\[
\left\langle
M_k^{n}(f_k),M_l^{n}(f_l)
\right\rangle_t
=
\int_0^t
\Gamma_n(f_k,f_l)\,\mathrm ds .
\]
For a vector-valued test function $ \bm f=(f_1,f_2)^\top $, we define the coupled martingale by
\[
\bm M_t^n(\bm f)
=
M_1^{n,t}(f_1^\cdot)
+
M_2^{n,t}(f_2^\cdot).
\]

For \(k \in \{1,2\}\), we write \(3-k\) for the other species. A direct computation gives
\[
\begin{aligned}
	(\partial_s+L_n)Y_k^{n,s}(f_k^s)
	&=
	\frac1{\sqrt n}
	\sum_{x=1}^{n-1}
	\partial_s f_k^s\left(\frac{x}{n}\right)
	\bar\eta_k^s(x)      
	+
	\frac1{\sqrt n}
	\sum_{x=1}^{n-1}
	\Delta_n f_k^s\left(\frac{x}{n}\right)
	\bar\eta_k^s(x)       \\
	&\quad
	+
	\frac1{\sqrt n}
	\sum_{x=2}^{n-2}
	\left(
	\gamma_{3-k}\bar\eta_{3-k}^s(x)
	-
	\gamma_k\bar\eta_k^s(x)
	\right)
	f_k^s\left(\frac{x}{n}\right)
	+
	R_k^n(f_k^s),
\end{aligned}
\]
where $ \bar\eta_k^s(x)=\eta_k^s(x)-\rho_k^{n,s}(x) $,
and
\begin{align}\label{eq5.1}
	R_k^n(f_k^s)
	&=
	\frac{n^2}{\sqrt n}
	\left(
	f_k^s\left(\frac1n\right)-f_k^s(0)
	\right)
	\bar\eta_k^s(1)        
	+
	\frac{n^2}{\sqrt n}
	\left(
	f_k^s\left(\frac{n-1}{n}\right)-f_k^s(1)
	\right)
	\bar\eta_k^s(n-1)   \notag   \\
	&\quad
	+
	\sqrt n
	\left(
	\alpha_k\bar\eta_0^s(1)
	-
	\alpha_0\bar\eta_k^s(1)
	\right)
	f_k^s\left(\frac1n\right)      
	+
	\sqrt n
	\left(
	\beta_k\bar\eta_0^s(n-1)
	-
	\beta_0\bar\eta_k^s(n-1)
	\right)
	f_k^s\left(\frac{n-1}{n}\right).
\end{align}
Here
\[
\Delta_n f\left(\frac{x}{n}\right)
=
n^2
\left[
f\left(\frac{x+1}{n}\right)
+
f\left(\frac{x-1}{n}\right)
-
2f\left(\frac{x}{n}\right)
\right].
\]

The carr\'e du champ can also be computed explicitly. For \(k=1,2\),
\begin{align}\label{eq5.2}
	\Gamma_n\left( Y_k^{n,s}(f_k^s),\, Y_k^{n,s}(f_k^s)\right) 
	&= 
	n\sum_{x=1}^{n-2}
	\left(\eta_k^{s }(x)-\eta_k^{s }(x+1)\right)^2
	\left( 
	f^s_k\left(\frac{x+1}{n}\right)
	-
	f^s_k\left(\frac{x}{n}\right)
	\right) ^2  \notag \\
	&+
	\frac1n\sum_{x=2}^{n-2}
	\left(
	\gamma_k\eta_k^{s }(x)
	+
	\gamma_{3-k}\eta_{3-k}^{s }(x)
	\right)
	f^s_k\left(\frac{x}{n}\right)^2
	\nonumber\\
	&+
	\left[
	\alpha_k\eta_0^{s }(1)
	+
	\alpha_0 \eta_k^{s }(1)
	\right]
	f^s_k\left(\frac1n\right)^2
	\nonumber\\
	&+
	\left[
	\beta_k\eta_0^{s }(n-1)
	+
	\beta_0\eta_k^{s }(n-1)
	\right]
	f^s_k\left(\frac{n-1}{n}\right)^2 .
\end{align}
For \(k\neq l\), we have
\begin{align}\label{eq5.3}
	\Gamma_n
	\left(
	Y_k^{n,s}(f_k^s),
	Y_l^{n,s}(f_l^s)
	\right)                                           
	&=
	-
	n
	\sum_{x=1}^{n-2}
	\left(
	\eta_k^s(x)\eta_l^{s}( x+1)
	+
	\eta_l^s(x)\eta_k^{s}(x+1)
	\right)                                    \notag      \\
	&\quad\times
	\left(
	f_k^s\left(\frac{x+1}{n}\right)
	-
	f_k^s\left(\frac{x}{n}\right)
	\right)
	\left(
	f_l^s\left(\frac{x+1}{n}\right)
	-
	f_l^s\left(\frac{x}{n}\right)
	\right)                                    \notag      \\
	&\quad
	-
	\frac1n
	\sum_{x=2}^{n-2}
	\left(
	\gamma_k\eta_k^s(x)
	+
	\gamma_l\eta_l^s(x)
	\right)
	f_k^s\left(\frac{x}{n}\right)
	f_l^s\left(\frac{x}{n}\right).
\end{align}

\begin{proposition}\label{p5.1}
	Let \(\bm f=(f_1,f_2)^\top\in\mathscr S^\dagger \). The sequence of martingales $ \left\{
	\bm M_t^n(\bm f):0\le t\le T \right\}_{n\in \mathbb{N} }$ converges in 
	the topology of  $ \mathcal D([0,T], \mathbb{R})$, as $ n  \to \infty $, towards a mean-zero Gaussian process \(W_t(\bm f)\) with quadratic variation given by
	\[
	\Phi_{11}+\Phi_{12}+\Phi_{21}+\Phi_{22},
	\]
	where, for \(k=1,2\),
	\[
	\begin{aligned}
		\Phi_{kk}
		&=
		\int_0^t
		\Bigg\{
		\int_0^1
		2\rho_k(s,u)
		\left(
		1-\rho_k(s,u)
		\right)
		\left(
		\partial_u f_k(u)
		\right)^2
		\,\mathrm du                                     
		+
		\int_0^1
		\left(
		\gamma_1\rho_1(s,u)
		+
		\gamma_2\rho_2(s,u)
		\right)
		f_k(u)^2
		\,\mathrm du                                      \\
		&\quad
		+
		\left(
		\alpha_k\rho_0(s,0)
		+
		\alpha_0\rho_k(s,0)
		\right)
		f_k(0)^2                                  
		+
		\left(
		\beta_k\rho_0(s,1)
		+
		\beta_0\rho_k(s,1)
		\right)
		f_k(1)^2
		\Bigg\}
		\,\mathrm ds,
	\end{aligned}
	\]
	and, for \(k\neq l\),
	\[
	\begin{aligned}
		\Phi_{kl}
		&=
		-
		\int_0^t
		\Bigg\{
		\int_0^1
		2\rho_1(s,u)\rho_2(s,u)
		\partial_u f_1(u)
		\partial_u f_2(u)
		\,\mathrm du                                      \\
		&\quad
		+
		\int_0^1
		\left(
		\gamma_1\rho_1(s,u)
		+
		\gamma_2\rho_2(s,u)
		\right)
		f_1(u)f_2(u)
		\,du
		\Bigg\}
		\,\mathrm ds .
	\end{aligned}
	\]
\end{proposition}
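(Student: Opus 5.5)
We will use the standard martingale functional central limit theorem (e.g.\ the criterion in Jacod--Shiryaev, or Theorem VIII.3.11 type results). For a càdlàg martingale sequence, convergence to a continuous Gaussian martingale with deterministic quadratic variation follows from two facts. First, the jumps of \(\bm M^n(\bm f)\) must be uniformly small, namely \(\mathbb E^n_{\mu^n}[\sup_{t\le T}|\bm M_t^n(\bm f)-\bm M_{t^-}^n(\bm f)|]\to0\). Second, the quadratic variation \(\langle \bm M^n(\bm f)\rangle_t\) must converge in probability, for each \(t\), to the deterministic limit \(\Phi_{11}+\Phi_{12}+\Phi_{21}+\Phi_{22}\). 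The jump condition is immediate. Each transition changes at most two occupation variables, and \(\rho_k^{n,t}\) is continuous in \(t\). The jumps of \(Y_k^{n,t}(f_k)\) are therefore bounded by \(2\|f_k\|_\infty/\sqrt n\), and the integral terms in the Dynkin martingale are continuous. Since \(\bm f\) is time-independent, we may take \(f_k^s=f_k\).

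For the quadratic variation we expand
\[
\langle \bm M^n(\bm f)\rangle_t=\sum_{k,l=1}^2\int_0^t\Gamma_n\bigl(Y_k^{n,s}(f_k),Y_l^{n,s}(f_l)\bigr)\,\mathrm ds,
\]
and treat each piece of \eqref{eq5.2}--\eqref{eq5.3} separately.

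In the conversion and bulk-exchange pieces, a Taylor expansion gives \(n(f_k(\tfrac{x+1}{n})-f_k(\tfrac xn))^2=\tfrac1n(\partial_uf_k(\tfrac xn))^2+O(n^{-2})\). These terms then become Riemann-type sums \(\frac1n\sum_x G(\tfrac xn)\,\Psi(\tau_x\eta_s)\). Here \(\Psi\) is either a one-site occupation or one of the two-site functions \((\eta_k^x-\eta_k^{x+1})^2=\eta_k^x+\eta_k^{x+1}-2\eta_k^x\eta_k^{x+1}\) and \(\eta_k^x\eta_l^{x+1}+\eta_l^x\eta_k^{x+1}\). The linear terms converge by the hydrodynamic limit, Theorem~\ref{th2.1}. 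For the quadratic terms we must replace \(\eta_k^x\eta_l^{x+1}\) by \(\rho_k(s,\tfrac xn)\rho_l(s,\tfrac xn)\), and we will do this in one of two ways. The first is Lemma~\ref{l4.2} with \(\psi=\eta_l\) at site \(x\), which replaces \(\eta_k^{x+1}\) by \(\overrightarrow\eta_k^{\,\varepsilon n}(x+1)\); the product of averages is then a continuous functional of the empirical measures, and Theorem~\ref{th2.1} applies. The second, simpler route is a second-moment computation. The expectation of the sum is \(\frac1n\sum_xG(\tfrac xn)[\rho_k^{n,s}(x)\rho_l^{n,s}(x+1)+\varphi_{kl}^{n,s}(x,x+1)]\), and the variance is controlled by the two-point correlations \(\varphi^{n,s}\). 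This uses the correlation estimates of Section~8, namely \(\sup_{x<y}|\varphi^{n,s}_{ij}(x,y)|\le C/n\), propagated from Assumption~\ref{ass4}, together with \(|\rho_k^{n,s}(x)-\rho_k(s,\tfrac xn)|\to0\) uniformly from Assumptions~\ref{ass2}--\ref{ass3}. Collecting terms gives \(2\rho_k(1-\rho_k)(\partial_uf_k)^2\) on the diagonal, \(-2\rho_1\rho_2\partial_uf_1\partial_uf_2\) off the diagonal, and the conversion contributions \(c\,f_k^2\) and \(-c f_1f_2\).

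The boundary pieces are the \(O(1)\) terms \([\alpha_k\eta_0^s(1)+\alpha_0\eta_k^s(1)]f_k(1/n)^2\), and the main obstacle is here. No spatial average is available in these terms, so we need to show that \(\int_0^t\eta_k^s(1)\,\mathrm ds\) converges to \(\int_0^t\rho_k(s,0)\,\mathrm ds\). Our plan is to split this into a mean and a fluctuation. For the mean, \(\int_0^t\rho_k^{n,s}(1)\,\mathrm ds\to\int_0^t\rho_k(s,0)\,\mathrm ds\) follows from the convergence of the discrete system \eqref{eq2.4} to \eqref{eq2.1} uniformly up to the boundary. For the fluctuation, we will bound the variance of \(\int_0^t\bar\eta_k^s(1)\,\mathrm ds\) by
\[
\int_0^t\!\!\int_0^t \mathbb E\bigl[\bar\eta^s_k(1)\bar\eta^r_k(1)\bigr]\,\mathrm dr\,\mathrm ds .
\]
Using the Markov property, the two-time correlation reduces to a one-time correlation evolved by the linear (discrete Robin) semigroup. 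Its time integral then becomes small thanks to the fast \(n^2\) mixing of a single site, together with the \(C/n\) correlation bounds. An alternative first attempt is to apply Lemma~\ref{l4.2} at \(x=1\) with \(\psi\equiv1\), replacing \(\eta_k^1\) by the box average \(\overrightarrow\eta_k^{\,\varepsilon n}(1)\), which converges to \(\rho_k(s,0)\) as \(\varepsilon\to0\) by Theorem~\ref{th2.1} and continuity of \(\rho_k\) at \(u=0\). We would try this second route first, since the lemma already covers it. The right boundary is handled identically. Finally, we conclude tightness and identify the limit as the continuous Gaussian martingale \(W_t(\bm f)\) with the stated quadratic variation.
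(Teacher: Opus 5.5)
Your main line is essentially the paper's proof. It applies the martingale functional CLT with jumps of order $n^{-1/2}$, uses Lemma~\ref{l4.2} together with the hydrodynamic limit for the bulk exchange and conversion terms, and treats the $O(1)$ boundary terms by Lemma~\ref{l4.2} at $x=1$ with $\psi\equiv1$, then $\varepsilon\downarrow0$; the paper writes out only the exchange term and says the remaining terms are handled similarly. Drop the ``simpler'' second-moment alternative for the bulk quadratic terms, though: the variance of $\frac1n\sum_x G(\tfrac xn)\eta_k^x\eta_l^{x+1}$ involves four-point correlations, and the two-point bounds of Section~8 do not control these.
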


\begin{proof}
	We follow the proof of \cite[Lemma 3.2]{ref6} and apply \cite[Theorem VIII.3.12]{ref4}. The jumps of $ \left\{
	\bm M_t^n(\bm f):0\le t\le T
	\right\} $
	are uniformly bounded by $ \frac{C}{\sqrt n}\|\bm f\|_\infty $,
	and therefore vanish as \(n\to\infty\). It remains to identify the limiting quadratic
	variation.
	
	We first consider the conservative contribution in \eqref{eq5.2}. The first
	term on the right-hand side of \eqref{eq5.2} can be rewritten as
	\begin{align}\label{eq5.4}
		&
		\frac1n
		\sum_{x=1}^{n-2}
		\eta_k^s(x)
		\left(
		1-\eta_k^{s}(x+1 )
		\right)
		\left(
		\nabla_n^+ f_k\left(\frac{x}{n}\right)
		\right)^2                                      
		+
		\frac1n
		\sum_{x=1}^{n-2}
		\eta_k^{s}(x+1)
		\left(
		1-\eta_k^s(x)
		\right)
		\left(
		\nabla_n^+ f_k\left(\frac{x}{n}\right)
		\right)^2,
	\end{align}
	where
	\[
	\nabla_n^+ f_k\left(\frac{x}{n}\right)
	=
	n
	\left[
	f_k\left(\frac{x+1}{n}\right)
	-
	f_k\left(\frac{x}{n}\right)
	\right].
	\]
	We consider the first term $ \int_0^t
	\frac1n
	\sum_{x=1}^{n-2}
	\eta_k^s(x)
	\left(
	1-\eta_k^{s}(x+1)
	\right)
	\left(
	\nabla_n^+ f_k\left(\frac{x}{n}\right)
	\right)^2
	\,\mathrm ds $. 
	Using the notation introduced in the replacement lemma, we decompose the sum into the
	bulk region $	x\notin \Sigma_n^{\varepsilon,L}\cup\Sigma_n^{\varepsilon,R} $ and its complement. The contribution of the complement is uniformly bounded by \(C\varepsilon\).
    
    In the bulk region, applying the replacement lemma \ref{l4.2} twice, with suitable choices of
    the local function \(\psi\), gives
    \[
    \begin{aligned}
    	&
    	\int_0^t
    	\frac1n
    	\sum_{x\notin\Sigma_n^{\varepsilon,L}\cup\Sigma_n^{\varepsilon,R}}
    	\eta_k^s(x)
    	\left(
    	1-\eta_k^{s}(x+1)
    	\right)
    	\left(
    	\nabla_n^+ f_k\left(\frac{x}{n}\right)
    	\right)^2
    	\,\mathrm ds                                               \\
    	&=
    	\int_0^t
    	\frac1n
    	\sum_{x\notin\Sigma_n^{\varepsilon,L}\cup\Sigma_n^{\varepsilon,R}}
    	\overleftarrow\eta_k^{\,\lfloor\varepsilon n\rfloor}(x,s)
    	\left(
    	1-
    	\overrightarrow\eta_k^{\,\lfloor\varepsilon n\rfloor}(x+1,s)
    	\right)
    	\left(
    	\nabla_n^+ f_k\left(\frac{x}{n}\right)
    	\right)^2
    	\,\mathrm ds.
    \end{aligned}
    \]
      Define
      \[
      \iota_{\varepsilon }^{x/n}(u)
      =
      \frac{n}{\lfloor\varepsilon n\rfloor}
      \mathbf 1_{\left(x/n,(x + \lfloor\varepsilon n\rfloor)/n\right]}(u),
      \qquad
      \hbar_{\varepsilon }^{x/n}(u)
      =
      \frac{n}{ \lfloor\varepsilon n\rfloor }
      \mathbf 1_{\left[(x - \lfloor\varepsilon n\rfloor)/n,x/n\right)}(u).
      \]
      Then
      \[
      \overrightarrow\eta_k^{\,\lfloor\varepsilon n\rfloor}(x,s)
      =
      X_k^{n,s}\left(\iota_\varepsilon^{x/n}\right),
      \qquad
      \overleftarrow\eta_k^{\,\lfloor\varepsilon n\rfloor}(x,s)
      =
      X_k^{n,s}\left(\hbar_\varepsilon^{x/n}\right).
      \]
       By a standard smooth approximation argument, using the boundedness of the
       occupation variables, the hydrodynamic limit extends to these block test
       functions. Hence, as \(n\to\infty\),
      \[
      \begin{aligned}
      	&
      	\int_0^t
      	\frac1n
      	\sum_{x\notin\Sigma_n^{\varepsilon,L}\cup\Sigma_n^{\varepsilon,R}}
      	\overleftarrow\eta_k^{\,\lfloor\varepsilon n\rfloor}(x,s)
      	\left(
      	1-
      	\overrightarrow\eta_k^{\,\lfloor\varepsilon n\rfloor}(x+1,s)
      	\right)
      	\left(
      	\nabla_n^+ f_k\left(\frac{x}{n}\right)
      	\right)^2
      	\,\mathrm ds                                               \\
      	&\to
      	\int_0^t
      	\int_\varepsilon^{1-\varepsilon}
      	\left(f_k'(u)\right)^2
      	\left(
      	\frac1\varepsilon
      	\int_{u-\varepsilon}^{u}
      	\rho_k(s,v)\,\mathrm dv
      	\right)
      	\left(
      	1-
      	\frac1\varepsilon
      	\int_u^{u+\varepsilon}
      	\rho_k(s,v)\,\mathrm dv
      	\right)
      	\,\mathrm du\,\mathrm ds .
      \end{aligned}
      \]
      Letting \(\varepsilon  \to 0\), we obtain
      \[
      \begin{aligned}
      	&
      	\int_0^t
      	\frac1n
      	\sum_{x=1}^{n-2}
      	\eta_k^s(x)
      	\left(
      	1-\eta_k^{s}(x+1)
      	\right)
      	\left(
      	\nabla_n^+ f_k\left(\frac{x}{n}\right)
      	\right)^2
      	\,\mathrm ds                                           \\
      	&\to
      	\int_0^t
      	\int_0^1
      	\left(f_k'(u)\right)^2
      	\rho_k(s,u)
      	\left(
      	1-\rho_k(s,u)
      	\right)
      	\,\mathrm du\,\mathrm ds .
      \end{aligned}
      \]
      The second term in \eqref{eq5.4} gives the same limit.
      Therefore,
      \[
      \begin{aligned}
      	&\int_0^t
      	n
      	\sum_{x=1}^{n-2}
      	\left(
      	\eta_k^s(x)-\eta_k^{s}(x+1)
      	\right)^2
      	\left(
      	f_k\left(\frac{x+1}{n}\right)
      	-
      	f_k\left(\frac{x}{n}\right)
      	\right)^2
      	\,\mathrm ds                                             \\
      	&\to
      	\int_0^t
      	\int_0^1
      	2\rho_k(s,u)
      	\left(
      	1-\rho_k(s,u)
      	\right)
      	\left(
      	\partial_u f_k(u)
      	\right)^2
      	\,\mathrm du\,\mathrm ds .
      \end{aligned}
      \]
      The remaining items can also be obtained using a similar method.  Since the limit is deterministic, the convergence in probability also holds. This completes the proof.

\end{proof}

We now complete the proof of Theorem~\ref{th2.2}.
The proof of tightness is postponed to Section~7. 
Fix \(t\in[0,T]\), and restrict the
processes to the time interval \([0,t]\). We choose the time-dependent test function
\[
\bm g(s,u)
=
\begin{pmatrix}
	g_1(s,u)\\
	g_2(s,u)
\end{pmatrix}
:=
(S_{t-s}\bm f)(u),
\qquad 0\le s\le t,
\]
where \((S_r)_{r\ge0}\) is the semigroup generated by the adjoint operator $ \mathcal A^\dagger=\Delta \mathrm I_2+M^\top $ in Section~3. Therefore
\[
\partial_s\bm g(s,u)
=
-\mathcal A^\dagger \bm g(s,u)=
\begin{pmatrix}
	-\Delta g_1+\gamma_1g_1-\gamma_1g_2\\[0.1cm]
	-\Delta g_2-\gamma_2g_1+\gamma_2g_2
\end{pmatrix}.
\]

By the direct computation of the generator, we obtain
\begin{equation}\label{eq:backward-test-drift}
	\begin{aligned}
		&(\partial_s+L_n)
		\left(
		Y_1^{n,s}(g_1)+Y_2^{n,s}(g_2)
		\right)   
	    =
		\sum_{k=1}^2
		Y_k^{n,s}\left((\Delta_n-\Delta)g_k\right)
		+
		R_0^n(s)
		+
		R_1^n(s)
		+
		R_2^n(s).
	\end{aligned}
\end{equation}
Here
\[
R_0^n(s)
=
\frac1{\sqrt n}
\sum_{x\in\{1,n-1\}}
\left(
\gamma_1\bar\eta_1^s(x)-\gamma_2\bar\eta_2^s(x)
\right)
\left(
g_1\left(s,\frac{x}{n}\right)
-
g_2\left(s,\frac{x}{n}\right)
\right),
\]
and \(R_k^n(s)\) denotes the boundary error term associated with the \(k\)-th
component in \eqref{eq5.1}.  

We claim that $ (\partial_s+L_n)
\left(
Y_1^{n,s}(g_1)+Y_2^{n,s}(g_2)
\right)
\to 0 $
as \(n\to\infty\).
We first consider the bulk discretization error. Since \(\bm g(s,\cdot)\in\mathscr S^\dagger \)
is smooth uniformly for \(0\le s\le t\), Taylor's expansion gives
\[
\Delta_n g_k\left(s,\frac{x}{n}\right)
-
\Delta g_k\left(s,\frac{x}{n}\right)
=
O(n^{-2})
\]
uniformly in \(x\) and \(s\). Hence $ Y_k^{n,s}\left((\Delta_n-\Delta)g_k\right)
= O(n^{-3/2})$.
Next, the term \(R_0^n(s)\) only involves the two boundary sites \(x=1\) and
\(x=n-1\). Since all occupation variables are uniformly bounded, and since
\(\bm g\) is bounded uniformly on \([0,t]\times[0,1]\), we have $ R_0^n(s)=O(n^{-1/2}) $.
It remains to treat the boundary terms. We discuss the left boundary; the
right boundary is handled in exactly the same way. The left boundary contribution is
\[
\begin{aligned}
	B_L^n(s)
	&=
	\sum_{k=1}^2
	\frac{n^2}{\sqrt n}
	\left(
	g_k\left(s,\frac1n\right)-g_k(s,0)
	\right)
	\bar\eta_k^s(1)   
	+
	\sum_{k=1}^2
	\sqrt n
	\left(
	\alpha_k\bar\eta_0^s(1)
	-
	\alpha_0\bar\eta_k^s(1)
	\right)
	g_k\left(s,\frac1n\right).
\end{aligned}
\]
Since $ \eta_0^x+\eta_1^x+\eta_2^x=1 $ and $ \rho_0^{n,s}(x)+\rho_1^{n,s}(x)+\rho_2^{n,s}(x)=1 $, we have $ \bar\eta_0^s(x)
=
-\bar\eta_1^s(x)-\bar\eta_2^s(x) $ for every $ x \in \Sigma_{n} $.
Therefore \(B_L^n(s)\) can be rewritten as
\begin{align}
	B_L^n(s)
	&=
	\sqrt n
	\left[
	n\left(
	g_1\left(s,\frac1n\right)-g_1(s,0)
	\right)
	-
	(1-\alpha_2)g_1\left(s,\frac1n\right)
	-
	\alpha_2g_2\left(s,\frac1n\right)
	\right]
	\bar\eta_1^s(1)
	\nonumber\\
	&\quad
	+
	\sqrt n
	\left[
	n\left(
	g_2\left(s,\frac1n\right)-g_2(s,0)
	\right)
	-
	\alpha_1g_1\left(s,\frac1n\right)
	-
	(1-\alpha_1)g_2\left(s,\frac1n\right)
	\right]
	\bar\eta_2^s(1).
	\label{eq:left-boundary-error}
\end{align}
By Taylor's expansion,
\[
n\left(
g_1\left(s,\frac1n\right)-g_1(s,0)
\right)
=
\partial_u g_1(s,0)+O(n^{-1}),
\]
\[
n\left(
g_2\left(s,\frac1n\right)-g_2(s,0)
\right)
=
\partial_u g_2(s,0)+O(n^{-1}),
\]
and boundary condition at \(u=0\):
\[
\partial_u g_1(s,0)
=
(1-\alpha_2)g_1(s,0)+\alpha_2g_2(s,0),
\]
\[
\partial_u g_2(s,0)
=
\alpha_1g_1(s,0)+(1-\alpha_1)g_2(s,0).
\]
Substituting these identities into \eqref{eq:left-boundary-error}, and using again
the smoothness of \(\bm g\), we obtain $ B_L^n(s)=O(n^{-1/2}) $. The right boundary is treated similarly.
Hence $ (\partial_s+L_n)
\left(
Y_1^{n,s}(g_1)+Y_2^{n,s}(g_2)
\right)
\to 0 $ as \(n\to\infty\).

We now apply the martingale decomposition with the particular choice $ \bm g(s,u)=S_{t-s}\bm f (u)$.
Since \(\bm g(t,u)=\bm f(u) \) and \(\bm g(0,u)=S_t\bm f(u) \), we have
\[
\begin{aligned}
	\bm M_t^n(\bm g)
	&=
	\left\langle \bm Y_t^n,\bm f\right\rangle
	-
	\left\langle \bm Y_0^n,S_t\bm f\right\rangle        
	-
	\int_0^t
	(\partial_s+L_n)
	\left(
	Y_1^{n,s}(g_1)+Y_2^{n,s}(g_2)
	\right)
	\,\mathrm ds .
\end{aligned}
\]
The last integral converges to zero as \(n\to\infty\). By the martingale convergence
proved in Proposition~\ref{p5.1}, the martingale
\(\bm M_t^n(\bm g)\) converges to a centered Gaussian random variable, denoted by
\(W_t(\bm f)\), whose variance is the limiting quadratic variation evaluated along
the backward test function \(S_{t-s}\bm f\).
Together with the tightness of \(\{ \mathbb Q_n\}_{n\in \mathbb{N} }\) in
\(\mathcal D([0,T],(\mathcal S^\dagger)')\) proved in Section~7, this implies that along every convergent subsequence of the fluctuation fields, every
limit point satisfies
\[
\left\langle \bm Y_t,\bm f\right\rangle
=
\left\langle \bm Y_0,S_t\bm f\right\rangle
+
W_t(\bm f).
\]
Moreover, the noise term \(W_t(\bm f)\) is uncorrelated with the initial field.
Indeed, let \((\mathcal F_t)_{t\ge0}\) denote the limiting filtration. Since
\(W_t(\bm f)\) is obtained as the limit of the martingale term and starts from
zero, we have
\[
\mathbb E\!\left[W_t(\bm f)\mid \mathcal F_0\right]=0 .
\]
Therefore, for every \(\bm f,\bm g\in \mathcal S^\dagger\),
\[
\mathbb E\!\left[
W_t(\bm f) \bm Y_0 ( \bm g )
\right]
=
\mathbb E \left[
\bm Y_0 ( \bm g )
\mathbb E \left[W_t(\bm f)\mid \mathcal F_0\right]
\right]
=0.
\]
Thus \(W_t(\bm f)\) and \(\left\langle \bm Y_0,\bm g\right\rangle\) are uncorrelated.
This proves Theorem~\ref{th2.2}.

\section{Proof of Theorem~\ref{th2.3}}

\subsection{Uniqueness of the Ornstein-Uhlenbeck process}

We first prove the uniqueness of the limiting Ornstein-Uhlenbeck process. Recall that $ \mathcal A^\dagger=\Delta \mathrm I_2+M^\top $
denotes the adjoint operator acting on the test function space \(\mathscr S^\dagger \), and
\((S_t)_{t\ge0}\) denotes the semigroup generated by \(\mathcal A^\dagger\).

\begin{proposition}\label{p6.1}
	There exists a unique random element \(\bm Y\) taking values in
	\(\mathcal D([0,T], (\mathscr S^\dagger )')\) such that, for every test function \(\bm f\in \mathscr S^\dagger \),
	\[
	W_t(\bm f)
	=
	\left\langle \bm Y_t,\bm f\right\rangle
	-
	\left\langle \bm Y_0,\bm f\right\rangle
	-
	\int_0^t
	\left\langle \bm Y_s,\mathcal A^\dagger\bm f\right\rangle
	\,\mathrm d s
	\]
	  and
	\[
	N_t(\bm f)
	=
	W_t(\bm f)^2
	-
	\int_0^t
	\mathcal Q_s(\bm f,\bm f)
	\,\mathrm d s
	\]
	are martingales with respect to the natural filtration. Moreover, the initial field
	\(\bm Y_0\) is a centered Gaussian field with covariance
	\[
	\mathbb E\left[
	\bm Y_0(\bm f)\bm Y_0(\bm g)
	\right]
	=
	\sigma(\bm f,\bm g),
	\qquad
	\bm f,\bm g\in\mathscr S^\dagger,
	\]
	where \(\sigma\) is the initial covariance form defined in Theorem \ref{th2.3}.
\end{proposition}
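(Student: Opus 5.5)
Existence comes from the limit points constructed in Theorem~\ref{th2.2}. Uniqueness in law follows the Holley--Stroock argument for generalized Ornstein--Uhlenbeck processes, adapted to the time-inhomogeneous covariance $Q_s$ and to the semigroup $(S_t)$ of Proposition~\ref{p3.1}. The goal is to show that the martingale problem determines all finite-dimensional distributions of $\bm Y$. Since the space $\mathcal D([0,T],(\mathscr S^\dagger)')$ is determined by the laws of $(\bm Y_{t_1}(\bm f_1),\dots,\bm Y_{t_m}(\bm f_m))$, this suffices.

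\textbf{Step 1: the martingale problem for time-dependent test functions.} I first extend the martingale problem to smooth time-dependent test functions $\bm g(s,\cdot)\in\mathscr S^\dagger$, namely
\[
W_t(\bm g)=\bm Y_t(\bm g_t)-\bm Y_0(\bm g_0)-\int_0^t \bm Y_s\bigl((\partial_s+\mathcal A^\dagger)\bm g_s\bigr)\,\mathrm ds .
\]
This should be a martingale with quadratic variation $\int_0^t Q_s(\bm g_s,\bm g_s)\,\mathrm ds$. I would prove it by partitioning $[0,t]$, freezing $\bm g$ on each subinterval, using linearity in $\bm f$, and passing to the limit with the continuity of $\bm Y$ in the seminorms of \eqref{eq2.3}.

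\textbf{Step 2: the semigroup choice and the characteristic function.} I then choose $\bm g_s=S_{t-s}\bm f$. By Proposition~\ref{p3.1} and Corollary~\ref{c3.1}, $\bm g_s\in\mathscr S^\dagger$ and $\partial_s\bm g_s=-\mathcal A^\dagger\bm g_s$, so the drift vanishes. Fix $s<t$ and $\lambda\in\mathbb R$, and apply It\^o's formula to the exponential martingale
\[
\exp\Bigl\{i\lambda \bm Y_r(S_{t-r}\bm f)+\tfrac{\lambda^2}{2}\int_s^r Q_u(S_{t-u}\bm f,S_{t-u}\bm f)\,\mathrm du\Bigr\},\qquad r\in[s,t].
\]
Note that $Q_u$ is deterministic, because it depends only on the hydrodynamic profile from Theorem~\ref{th2.1}, which is unique. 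This yields
\[
\mathbb E\bigl[e^{i\lambda\bm Y_t(\bm f)}\mid\mathcal F_s\bigr]=\exp\Bigl\{i\lambda\bm Y_s(S_{t-s}\bm f)-\tfrac{\lambda^2}{2}\int_s^tQ_u(S_{t-u}\bm f,S_{t-u}\bm f)\,\mathrm du\Bigr\}.
\]

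\textbf{Step 3: induction on the times.} Iterating over $t_1<\dots<t_m$ by conditioning on $\mathcal F_{t_{m-1}}$ and using linearity reduces each finite-dimensional characteristic function to that of $\bm Y_0$ evaluated at a combination of the $S_{t_j}\bm f_j$. The law of $\bm Y_0$ is the given Gaussian field with covariance $\sigma$. Hence all finite-dimensional laws are determined and are Gaussian. This also gives the covariance formula of Theorem~\ref{th2.3}.

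\textbf{Main obstacle.} The delicate step is justifying It\^o's formula, and Step 1 in particular, for the non-smooth-in-time functional $\bm Y_r(S_{t-r}\bm f)$. This requires the map $r\mapsto S_{t-r}\bm f$ to be continuous, together with its time derivative, in every seminorm $\|\cdot\|_k$, and $\bm Y$ to have continuous paths. I would obtain the regularity from analyticity of the semigroup together with the bound $D((\mathcal A^\dagger)^m)\subset\mathcal H^{2m}$ and Sobolev embedding. Path continuity comes from the bounded jumps, of size $O(n^{-1/2})$, established in the tightness argument. Continuity of the limiting martingale itself comes from Proposition~\ref{p5.1}.
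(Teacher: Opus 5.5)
Your proposal is correct and follows essentially the same Holley--Stroock argument as the paper: exponential martingales along the backward semigroup $S_{\mathcal T-t}\bm f$, a time partition controlled by the semigroup expansion of Lemma~\ref{l6.1} and the path continuity of $\bm Y$, the resulting conditional characteristic function, and then induction over ordered times down to the Gaussian law of $\bm Y_0$. The only difference is where the partition is made. The paper multiplies the bounded exponential martingales $X^{s_j}_{s_{j+1}}(S_{\mathcal T-s_j}\bm f)$, so passing to the limit is plain dominated convergence. You partition the additive martingale first, which additionally needs the easy uniform bound $\sum_j\int Q_u(\bm g_{s_j},\bm g_{s_j})\,\mathrm du$ on the second moments, so that the martingale property survives in the limit.
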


Before proving the proposition, we need the following elementary semigroup property.
\begin{lemma}\label{l6.1}
	For every \(\bm f\in\mathscr S^\dagger\),
	\[
	S_{t+\varepsilon}\bm f-S_t\bm f
	=
	\varepsilon \mathcal A^\dagger S_t\bm f
	+
	o(\varepsilon,t),
	\]
	where \(o(\varepsilon,t)\in\mathscr S^\dagger\), and
	\[
	\lim_{\varepsilon\downarrow 0}
	\sup_{0\le t\le T}
	\frac{\|o(\varepsilon,t)\|_m}{\varepsilon}
	=
	0,
	\qquad
	\forall\,m\ge0.
	\]
\end{lemma}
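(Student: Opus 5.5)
We define the remainder by $o(\varepsilon,t):=S_{t+\varepsilon}\bm f-S_t\bm f-\varepsilon\mathcal A^\dagger S_t\bm f$. By Proposition~\ref{p3.1} and Corollary~\ref{c3.1}, each of the three terms lies in $\mathscr S^\dagger$, so $o(\varepsilon,t)\in\mathscr S^\dagger$. Since $\bm f\in\mathscr S^\dagger\subset D((\mathcal A^\dagger)^m)$ for all $m$, the map $t\mapsto S_t\bm f$ is differentiable with $\partial_tS_t\bm f=\mathcal A^\dagger S_t\bm f=S_t\mathcal A^\dagger\bm f$. Applying this twice gives the Taylor formula with integral remainder in $\mathcal L^2$:
\[
o(\varepsilon,t)=\int_0^\varepsilon(\varepsilon-r)\,S_{t+r}(\mathcal A^\dagger)^2\bm f\,\mathrm dr .
\]
Hence it suffices to show that, for each $m$, $\sup_{0\le s\le T+1}\|S_s\bm h\|_m<\infty$ for $\bm h=(\mathcal A^\dagger)^2\bm f\in\mathscr S^\dagger$. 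Granting this, and because the integral representation also holds pointwise (the integrand is continuous in $(r,u)$), we get
\[
\|o(\varepsilon,t)\|_m\le \tfrac{\varepsilon^2}{2}\sup_{s\le T+1}\|S_s\bm h\|_m ,
\]
so $\|o(\varepsilon,t)\|_m/\varepsilon\to0$ uniformly in $t\in[0,T]$.

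The main step is to turn $\mathcal L^2$ control into uniform $\mathcal C^m$ control of $S_s\bm h$. We would first use the Sobolev embedding $\mathcal H^{m+1}\hookrightarrow\mathcal C^m$ on $[0,1]$, which bounds $\|\cdot\|_m$ by the $\mathcal H^{m+1}$-norm. Next we would use the elliptic regularity for the Robin problem already quoted in the proof of Proposition~\ref{p3.1}, in the quantitative form
\[
\|\bm u\|_{\mathcal H^{2j}}\le C_j\sum_{i=0}^{j}\|(\mathcal A^\dagger)^i\bm u\|_{\mathcal L^2},\qquad \bm u\in D((\mathcal A^\dagger)^j).
\]
This follows by induction from the one-step estimate $\|\bm u\|_{\mathcal H^2}\le C(\|\mathcal A^\dagger\bm u\|_{\mathcal L^2}+\|\bm u\|_{\mathcal L^2})$ for $\bm u\in D(\mathcal A^\dagger)$. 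The one-step estimate is standard in one dimension: write $\bm u''=\mathcal A^\dagger\bm u-M^\top\bm u$, and control $\bm u'$ by interpolation, or directly from the trace values and the boundary conditions. Finally, since $S_s$ commutes with $(\mathcal A^\dagger)^i$ on $\mathscr S^\dagger$ (Proposition~\ref{p3.1}) and $\|S_s\|_{\mathcal L^2\to\mathcal L^2}\le C$ by Lemma~\ref{l3.1}, we obtain
\[
\|(\mathcal A^\dagger)^iS_s\bm h\|_{\mathcal L^2}=\|S_s(\mathcal A^\dagger)^i\bm h\|_{\mathcal L^2}\le C\|(\mathcal A^\dagger)^i\bm h\|_{\mathcal L^2},
\]
uniformly in $s$. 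Combining the three facts gives $\sup_s\|S_s\bm h\|_m\le C_m(\bm f)<\infty$.

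The expected obstacle is justifying the quantitative higher-order elliptic estimate with Robin boundary conditions, including the inductive step $D((\mathcal A^\dagger)^j)\subset\mathcal H^{2j}$ with norm bounds. In one dimension the ODE $\bm u''=\bm g-M^\top\bm u$ gives this directly by differentiating repeatedly, so we would handle it that way rather than cite general theory. A secondary point is that the pointwise version of the Taylor formula needs $r\mapsto S_{t+r}\bm h$ to be continuous in $\mathcal C^m$. This follows from the same estimates applied to differences: $S_{s'}\bm h-S_s\bm h=\int_s^{s'}S_r\mathcal A^\dagger\bm h\,\mathrm dr$, which is small in every $\mathcal H^{k}$ norm.
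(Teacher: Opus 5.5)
Your proof is correct, but it takes a different route from the paper's.

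\textbf{The paper's argument.} It stays at first order. It writes $o(\varepsilon,t)=\int_0^\varepsilon\bigl(\mathcal A^\dagger S_{t+s}\bm f-\mathcal A^\dagger S_t\bm f\bigr)\,\mathrm ds$. It then concludes from the uniform continuity of $r\mapsto\mathcal A^\dagger S_r\bm f$ in every seminorm $\|\cdot\|_m$. It asserts this continuity as a consequence of the space--time smoothness in Proposition~\ref{p3.1}, without a quantitative argument.

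\textbf{Your argument.} You go to second order, $o(\varepsilon,t)=\int_0^\varepsilon(\varepsilon-r)S_{t+r}(\mathcal A^\dagger)^2\bm f\,\mathrm dr$. As a result you only need uniform \emph{boundedness} of $S_s\bm h$ in $\mathcal C^m$, not continuity. You prove this boundedness explicitly from three ingredients:
\begin{itemize}
\item the graph-norm estimate $\|\bm u\|_{\mathcal H^{2j}}\le C_j\sum_{i\le j}\|(\mathcal A^\dagger)^i\bm u\|_{\mathcal L^2}$;
\item commutation of $S_s$ with $(\mathcal A^\dagger)^i$ on $\mathscr S^\dagger$;
\item boundedness of $S_s$ on $\mathcal L^2$ for $s\le T+1$.
\end{itemize}
For the last point you do not need the exponential decay of Lemma~\ref{l3.1}; the bound $\|S_s\|\le Ce^{\omega s}$ valid for any $C_0$-semigroup suffices.

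\textbf{What your route buys.} You get a rate, $\|o(\varepsilon,t)\|_m\le C_m(\bm f)\,\varepsilon^2$, uniformly in $t\in[0,T]$. You also get a self-contained justification of exactly the step the paper leaves implicit. Continuity of $r\mapsto\mathcal A^\dagger S_r\bm f$ in the Fr\'echet topology of $\mathscr S^\dagger$ itself requires an estimate of your type. The only cost is one extra power of $\mathcal A^\dagger$ applied to $\bm f$, which is free in $\mathscr S^\dagger$.

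\textbf{The step you flagged as the obstacle is easy.} In one dimension, the one-step bound $\|\bm u\|_{\mathcal H^2}\le C\bigl(\|\mathcal A^\dagger\bm u\|_{\mathcal L^2}+\|\bm u\|_{\mathcal L^2}\bigr)$ holds for every $\bm u\in\mathcal H^2$, from $\bm u''=\mathcal A^\dagger\bm u-M^\top\bm u$ and interpolation for $\bm u'$. The Robin conditions play no role in the estimate. They only guarantee that the relevant functions lie in the domains of the powers of $\mathcal A^\dagger$.

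\textbf{Optional simplification.} You can drop the pointwise Taylor formula and the $\mathcal C^m$-continuity discussion entirely. Since $(\mathcal A^\dagger)^i$ commutes with $S_s$ on $\mathscr S^\dagger$, the function $(\mathcal A^\dagger)^i o(\varepsilon,t)$ is the remainder associated with $(\mathcal A^\dagger)^i\bm f\in\mathscr S^\dagger$. The $\mathcal L^2$ Taylor formula then gives $\|(\mathcal A^\dagger)^i o(\varepsilon,t)\|_{\mathcal L^2}\le C\varepsilon^2\|(\mathcal A^\dagger)^{i+2}\bm f\|_{\mathcal L^2}$. Applying the elliptic estimate and the embedding $\mathcal H^{m+1}\hookrightarrow\mathcal C^m$ directly to $o(\varepsilon,t)\in\mathscr S^\dagger$ finishes the proof.
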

\begin{proof}
	By the analyticity of \((S_t)_{t\ge0}\),
	\[
	\frac{\mathrm d}{\mathrm d t}S_t\bm f
	=
	\mathcal A^\dagger S_t\bm f
	=
	S_t\mathcal A^\dagger\bm f.
	\]
	Therefore,
	\[
	S_{t+\varepsilon}\bm f-S_t\bm f
	=
	\int_t^{t+\varepsilon}
	\frac{\mathrm d}{\mathrm d r}S_r\bm f
	\,\mathrm d r
	=
	\int_t^{t+\varepsilon}
	\mathcal A^\dagger S_r\bm f
	\,\mathrm d r.
	\]
	Setting \(r=t+s\), we have
	\[
	S_{t+\varepsilon}\bm f-S_t\bm f
	=
	\int_0^\varepsilon
	\mathcal A^\dagger S_{t+s}\bm f
	\,\mathrm d s
	=
	\varepsilon\mathcal A^\dagger S_t\bm f
	+
	o(\varepsilon,t),
	\]
	where
	\[
	o(\varepsilon,t)
	=
	\int_0^\varepsilon
	\left(
	\mathcal A^\dagger S_{t+s}\bm f
	-
	\mathcal A^\dagger S_t\bm f
	\right)
	\,\mathrm d s.
	\]
	Since the map $ r\mapsto \mathcal A^\dagger S_r\bm f $ is continuous in \(\mathscr S^\dagger \), and is uniformly continuous on compact time intervals,
	we obtain
	\[
	\frac{\|o(\varepsilon,t)\|_m}{\varepsilon}
	\le
	\sup_{0\le s\le\varepsilon}
	\left\|
	\mathcal A^\dagger S_{t+s}\bm f
	-
	\mathcal A^\dagger S_t\bm f
	\right\|_m
	\to 0
	\]
	uniformly for \(t\in[0,T]\). This proves the lemma.
\end{proof}

\begin{proof}[Proof of Proposition~\ref{p6.1}]
	The proof follows the standard argument for generalized Ornstein-Uhlenbeck
	martingale problems (\cite{ref3,ref5}). For every \(s\ge0\) and $ \bm f \in \mathscr{S}^\dagger $, by Ito's Formula (see [\cite{ref10}, Theorem~3.3 and Corollary~3.3]) and the discussion above, the process $ \{ X_t^s (\bm f) ; t \ge s \} $ defined by  
	\[
	X_t^s(\bm f)
	=
	\exp\left\{
	\frac12
	\int_s^t
	 Q_r(\bm f,\bm f)
	\,\mathrm d r
	+
	i
	\left[
	\bm Y_t(\bm f)
	-
	\bm Y_s(\bm f)
	-
	\int_s^t
	\bm Y_r(\mathcal A^\dagger\bm f)
	\,\mathrm d r
	\right]
	\right\},
	\qquad t\ge s
	\]
	is a complex-valued martingale.
	Fix \(\mathcal T>0\). We shall prove that
	\[
	Z_t
	=
	\exp\left\{
	\frac12
	\int_0^t
	  Q_r
	\left(
	S_{\mathcal T-r}\bm f,
	S_{\mathcal T-r}\bm f
	\right)
	\,\mathrm d r
	+
	i\bm Y_t(S_{\mathcal T-t}\bm f)
	\right\},
	\qquad 0\le t\le\mathcal T
	\]
	is also a complex-valued martingale.
	
	Let \(0\le t_1<t_2\le\mathcal T\). We divide the interval \([t_1,t_2]\) into \(N\)
	subintervals:
	\[
	t_1=s_0<s_1<\cdots<s_N=t_2,
	\qquad
	s_{j+1}-s_j=\frac{t_2-t_1}{N}.
	\]
	We have
	\begin{align*}
		\prod_{j=0}^{N-1}
		X_{s_{j+1}}^{s_j}(S_{\mathcal T-s_j}\bm f)
		&= \exp\Bigg\{
		\sum_{j=0}^{N-1}
		\frac12
		\int_{s_j}^{s_{j+1}}
		 Q_r
		\left(
		S_{\mathcal T-s_j}\bm f,
		S_{\mathcal T-s_j}\bm f
		\right)
		\,\mathrm d r                                      \\
		&\quad
		+
		i\sum_{j=0}^{N-1}
		\Bigg[
		\bm Y_{s_{j+1}}(S_{\mathcal T-s_j}\bm f)
		-
		\bm Y_{s_j}(S_{\mathcal T-s_j}\bm f)
		-
		\int_{s_j}^{s_{j+1}}
		\bm Y_r(\mathcal A^\dagger S_{\mathcal T-s_j}\bm f)
		\,\mathrm d r
		\Bigg]
		\Bigg\}.
	\end{align*}
     By the continuity of \(r\mapsto S_r\bm f\) in \(\mathscr S^\dagger \), the first sum converges to
     \[
     \frac12
     \int_{t_1}^{t_2}
       Q_r
     \left(
     S_{\mathcal T-r}\bm f,
     S_{\mathcal T-r}\bm f
     \right)
     \,\mathrm d r
     \]
     as \(N\to\infty\).
     We now analyze the linear part.  Note that
     \begin{align}\label{eq6.1}
     	&\sum_{j=0}^{N-1}
     	\Bigg[
     	\bm Y_{s_{j+1}}(S_{\mathcal T-s_j}\bm f)
     	-
     	\bm Y_{s_j}(S_{\mathcal T-s_j}\bm f)
     	-
     	\int_{s_j}^{s_{j+1}}
     	\bm Y_r(\mathcal A^\dagger S_{\mathcal T-s_j}\bm f)
     	\,\mathrm d r
     	\Bigg]
     	\nonumber\\
     	&=
     	\bm Y_{t_2}(S_{\mathcal T-t_2+\frac{t_2-t_1}{N}}\bm f)
     	-
     	\bm Y_{t_1}(S_{\mathcal T-t_1}\bm f)
     	-
     	\int_{t_1}^{t_1+\frac{t_2-t_1}{N}}
     	\bm Y_r(\mathcal A^\dagger S_{\mathcal T-t_1}\bm f)
     	\,\mathrm d r
     	\nonumber\\
     	&\quad
     	+
     	\sum_{j=1}^{N-1}
     	\Bigg[
     	\bm Y_{s_j}
     	\left(
     	S_{\mathcal T-s_{j-1}}\bm f
     	-
     	S_{\mathcal T-s_j}\bm f
     	\right)
     	-
     	\int_{s_j}^{s_{j+1}}
     	\bm Y_r(\mathcal A^\dagger S_{\mathcal T-s_j}\bm f)
     	\,\mathrm d r
     	\Bigg].
     \end{align}
     By Lemma~\ref{l6.1}, we obtain
     \begin{align*}
     	\bm Y_{s_j}
     	\left(
     	S_{\mathcal{T}-s_{j-1}}\bm f- S_{\mathcal{T}-s_j}\bm f
     	\right)
     	&= \bm Y_{s_j} \left( (s_j-s_{j-1})\mathcal A^\dagger S_{\mathcal{T}-s_j}\bm f
     	+
     	o\left(\frac{t_2-t_1}{N},\mathcal{T}-s_j\right)  \right)\\
     	&= 	\int_{s_j}^{s_{j+1}}
     	\bm Y_{s_j}(\mathcal A^\dagger  S_{\mathcal{T}-s_j}\bm f)\,\mathrm dr 
     	+ \bm Y_{s_j} \left(  o\left(\frac{t_2-t_1}{N},\mathcal{T}-s_j\right)  \right).
     \end{align*}
      By Proposition~\ref{p3.1}, the map $ r\to S_r\bm f $
     is \( \mathcal C^\infty\) as an \(\mathcal S^\dagger\)-valued map. In particular, $ r\to A^\dagger S_r\bm f $ is continuous, and hence uniformly continuous on compact time intervals.
     Moreover, since the limiting field satisfies $ \bm Y\in \mathcal C\big([0,T],(\mathcal S^\dagger)'\big) $, for every \(\bm h\in\mathcal S^\dagger\) the map $ s\to   \bm Y_s ( \bm h ) $ is continuous. Therefore, for fixed \(\bm f\in\mathcal S^\dagger\), the map $ (s,r)\to
       \bm Y_s( A^\dagger S_r\bm f ) $ is uniformly continuous.
      Then the preceding expression implies that the
      sum in \eqref{eq6.1} converges to
      \[
      \bm Y_{t_2}(S_{\mathcal T-t_2}\bm f)
      -
      \bm Y_{t_1}(S_{\mathcal T-t_1}\bm f).
      \]
      Consequently,
      \begin{align*}
      &\lim_{N\to\infty}
      \prod_{j=0}^{N-1}
      X_{s_{j+1}}^{s_j}(S_{\mathcal T-s_j}\bm f) \\
      &=
      \exp \left\lbrace 
      \sum_{j=0}^{N-1} \frac{1}{2} \int_{s_j}^{s_{j+1}}  Q_r(S_{\mathcal{T} -s_j}\bm f,S_{\mathcal{T}-s_j}\bm f)\,\mathrm dr   
      + i \left( \bm Y_{t_2}(S_{\mathcal{T}-t_2}\bm f)- \bm Y_{t_1}(S_{\mathcal{T}-t_1}\bm f) \right)   
      \right\rbrace  
       =\frac{Z_{t_2}}{Z_{t_1}}	 
      \end{align*}
      almost surely. Since the complex exponentials are uniformly bounded on compact time
      intervals, the Dominated Convergence Theorem gives additionally the  \(\mathcal L^1\)  convergence.
      Let \(G\) be a bounded \(\mathcal F_{t_1}\)-measurable random variable. Then
      \[
      \mathbb E
      \left[
      G\frac{Z_{t_2}}{Z_{t_1}}
      \right]
      =
      \lim_{N\to\infty}
      \mathbb E
      \left[
      G
      \prod_{j=0}^{N-1}
      X_{s_{j+1}}^{s_j}(S_{\mathcal T-s_j}\bm f)
      \right].
      \]
      Using the martingale property of each factor
      \(X_{s_{j+1}}^{s_j}(S_{\mathcal T-s_j}\bm f)\), and taking conditional expectations
      successively from \(j=N-1\) down to \(j=0\), we obtain
      \[
      \mathbb E
      \left[
      G
      \prod_{j=0}^{N-1}
      X_{s_{j+1}}^{s_j}(S_{\mathcal T-s_j}\bm f)
      \right]
      =
      \mathbb E[G].
      \]
      Letting \(N\to\infty\), we get
      \[
      \mathbb E
      \left[
      G\frac{Z_{t_2}}{Z_{t_1}}
      \right]
      =
      \mathbb E[G].
      \]
      Therefore \(\{Z_t:0\le t\le\mathcal T\}\) is a martingale.
      
      It follows from the martingale property of \(Z_t\) that, for every \(0\le s<t\le\mathcal T\), $ \mathbb E[Z_t\mid\mathcal F_s]=Z_s $.
      Expanding this identity gives
      \[
      \mathbb E
      \left[
      \exp\left\{
      i\bm Y_t(S_{\mathcal T-t}\bm f)
      \right\}
      \mid\mathcal F_s
      \right]
      =
      \exp\left\{
      -\frac12
      \int_s^t
        Q_r
      \left(
      S_{\mathcal T-r}\bm f,
      S_{\mathcal T-r}\bm f
      \right)
      \,\mathrm d r
      +
      i\bm Y_s(S_{\mathcal T-s}\bm f)
      \right\}.
      \]
      Now set $ \bm g=S_{\mathcal T-t}\bm f $.
      Then
      \[
      S_{\mathcal T-s}\bm f
      =
      S_{t-s}S_{\mathcal T-t}\bm f
      =
      S_{t-s}\bm g.
      \]
      Thus
      \[
      \mathbb E
      \left[
      \exp\left\{
      i\bm Y_t(\bm g)
      \right\}
      \mid\mathcal F_s
      \right]
      =
      \exp\left\{
      -\frac12
      \int_s^t
       Q_r
      \left(
      S_{t-r}\bm g,
      S_{t-r}\bm g
      \right)
      \,\mathrm d r
      +
      i\bm Y_s(S_{t-s}\bm g)
      \right\}.
      \]
      Replacing \(\bm g\) by \(\lambda\bm f\), we obtain
      \[
      \mathbb E
      \left[
      \exp\left\{
      i\lambda\bm Y_t(\bm f)
      \right\}
      \mid\mathcal F_s
      \right]
      =
      \exp\left\{
      -\frac{\lambda^2}{2}
      \int_s^t
        Q_r
      \left(
      S_{t-r}\bm f,
      S_{t-r}\bm f
      \right)
      \,\mathrm d r
      +
      i\lambda\bm Y_s(S_{t-s}\bm f)
      \right\},
      \]
      which means that, conditionally on \(\mathcal F_s\), the random variable
      \(\bm Y_t(\bm f)\) is Gaussian with mean $ \bm Y_s(S_{t-s}\bm f) $
      and variance $ \int_s^t
        Q_r
      \left(
      S_{t-r}\bm f,
      S_{t-r}\bm f
      \right)
      \,\mathrm d r $.
      Since the distribution of the initial field \(\bm Y_0\) is fixed by the covariance
      \(\sigma\), the above conditional characteristic function determines all
      finite-dimensional distributions of the process. Indeed, by applying the conditional
      characteristic function successively to linear combinations of test functions at
      ordered times, one determines the joint characteristic function of $ \left(
      \bm Y_{t_1}(\bm f_1),
      \ldots,
      \bm Y_{t_m}(\bm f_m)
      \right) $
      for arbitrary $ 0\le t_1<\cdots<t_m\le T $ and $ \bm f_1,\ldots,\bm f_m\in\mathscr S^\dagger $.
      Hence the martingale problem admits at most one solution in distribution. This proves
      the uniqueness of the limiting Ornstein-Uhlenbeck process.

\end{proof}

\subsection{Construction of limit points}

Let \(\{\mathbb Q_n\}_{n \in \mathbb{N} }\) be the sequence of probability measures on
\(\mathcal D([0,T],(\mathscr S^\dagger)')\) induced by the fluctuation fields $\left\{\bm Y_t^n:0\le t\le T\right\}_{n \in \mathbb{N} }  $. We will prove that any limit point of \(\{\mathbb Q_n\}_{n \in \mathbb{N} }\)  satisfies the conditions of Proposition \ref{p6.1}.

Fix a time-independent test function $ \bm f=(f_1,f_2)^\top\in\mathscr S^\dagger $. Then
\[
\bm M_t^n(\bm f)
=
\left\langle \bm Y_t^n,\bm f\right\rangle
-
\left\langle \bm Y_0^n,\bm f\right\rangle
-
\int_0^t
(\partial_s+L_n)
\left(
Y_1^{n,s}(f_1)+Y_2^{n,s}(f_2)
\right)
\,\mathrm d s .
\]
By the generator computation in the proof of Theorem~\ref{th2.2}, we have
\[
\begin{aligned}
	\int_0^t
	(\partial_s+L_n)Y_k^{n,s}(f_k)
	\,\mathrm d s
	&=
	\int_0^t
	\Bigg\{
	Y_k^{n,s}(\Delta f_k)
	+
	\gamma_{3-k}Y_{3-k}^{n,s}(f_k)
	-
	\gamma_kY_k^{n,s}(f_k)
	+
	\mathfrak R_k^n(f_k)
	\Bigg\}
	\,\mathrm d s ,
\end{aligned}
\]
where
\[
\begin{aligned}
	\mathfrak R_k^n(f_k)
	&=
	Y_k^{n,s}\left((\Delta_n-\Delta)f_k\right)  
	+
	\frac1{\sqrt n}
	\sum_{x\in\{1,n-1\}}
	\left(
	\gamma_k\bar\eta_k^s(x)
	-
	\gamma_{3-k}\bar\eta_{3-k}^s(x)
	\right)
	f_k\left(\frac{x}{n}\right)
	+
	R_k^n(f_k).
\end{aligned}
\]
Here \(R_k^n(f_k)\) denotes the boundary error defined in \eqref{eq5.1}.

The boundary terms cannot be treated componentwise. However, for the coupled
fluctuation field, the boundary contributions cancel because the test function
\(\bm f\in\mathscr S^\dagger \) satisfies the adjoint Robin boundary conditions. 
More precisely,
using the same Taylor expansion and boundary cancellation as in the proof of
Theorem~\ref{th2.2}, one obtains
 \[
\lim_{n \to \infty} \int_0^t
\left[
\mathfrak R_1^n(f_1)+\mathfrak R_2^n(f_2)
\right]
\,\mathrm d s
\to 0
\]
uniformly for \(t\in[0,T]\).
Therefore,
\[
\begin{aligned}
\bm M_t^n(\bm f)
=
\left\langle \bm Y_t^n,\bm f\right\rangle
-
\left\langle \bm Y_0^n,\bm f\right\rangle
-
\int_0^t
\left\langle
\bm Y_s^n,
\mathcal A^\dagger\bm f
\right\rangle
\,\mathrm d s
+
o_n(1),	 
\end{aligned}
\]
where \(o_n(1)\to0\) in probability.  
 
By Proposition~\ref{p5.1}, the martingales
\(\bm M_t^n(\bm f)\) converge to a centered Gaussian process whose predictable
quadratic variation is
\[
\int_0^t
  Q_s(\bm f,\bm f)
\,\mathrm d s= \Phi_{11} + \Phi_{12} + \Phi_{21} + \Phi_{22}.
\]
Thus, passing to the limit along a convergent subsequence, every limit point
\(\mathbb Q\) is concentrated on paths satisfying the following martingale problem:
for every \(\bm f\in\mathscr S^\dagger \),
\[
W_t(\bm f)
=
\left\langle \bm Y_t,\bm f\right\rangle
-
\left\langle \bm Y_0,\bm f\right\rangle
-
\int_0^t
\left\langle
\bm Y_s,
\mathcal A^\dagger\bm f
\right\rangle
\,\mathrm d s
\] and
\[
\mathcal N_t(\bm f)
=
W_t(\bm f)^2
-
\int_0^t
  Q_s(\bm f,\bm f)
\,\mathrm d s
\]
are martingales.

By Proposition~\ref{p6.1},
the characterization of the limit points obtained above, and the tightness result
proved in Section~7, it remains only to identify the covariance structure of the
limit. This follows from Theorem~\ref{th2.2}. More precisely, the martingale part converges
to a centered Gaussian process with covariance determined by the bilinear form
\(  Q_t\), while the initial field \(\bm Y_0\) has covariance \(\sigma\) and is uncorrelated with the limiting martingale noise. Hence, for every
\(\bm f,\bm g\in\mathscr S^\dagger\),
\[
\mathbb E
\left[
 \bm Y_t ( \bm f )
 \bm Y_s ( \bm g )
\right]
=
\sigma(S_t\bm f,S_s\bm g)
+
\int_0^{s\wedge t}
 Q_r
\left(
S_{t-r}\bm f,
S_{s-r}\bm g
\right)
\,\mathrm d r .
\]
This completes the proof of Theorem~\ref{th2.3}.

\section{Tightness}

In this section we prove the tightness of the sequence of fluctuation fields $ \left\{\bm Y_t^n:\,0\le t\le T\right\}_{n\in \mathbb{N}} $
in \(\mathcal D([0,T],(\mathscr S^\dagger)')\).
By Mitoma's criterion, it is enough to prove that, for every
\(\bm f\in\mathscr S^\dagger \), the real-valued processes $\left\{\bm Y_t^n(\bm f):\,0\le t\le T\right\}_{n\in \mathbb{N} } $ are tight in \(\mathcal D([0,T];\mathbb R)\).  

\begin{proposition}\label{p7.1}
	The space \(\mathscr S^\dagger \), endowed with the seminorms
	\[
	\|\bm f\|_k
	=
	\sup_{u\in[0,1]}
	\left\|
	\partial_u^k\bm f(u)
	\right\|_{\mathbb R^2},
	\qquad k\in\mathbb N\cup\{0\}
	\]
	is a Fr\'echet space.
\end{proposition}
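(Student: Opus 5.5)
The natural route is to realize $\mathscr S^\dagger$ as a closed subspace of the Fréchet space $\mathcal C^\infty([0,1];\mathbb R^2)$. Closed subspaces of Fréchet spaces are Fréchet, so it suffices to check two things: that the family $\{\|\cdot\|_k\}_{k\ge0}$ defines a metrizable locally convex Hausdorff topology on $\mathcal C^\infty([0,1];\mathbb R^2)$ in which that space is complete, and that $\mathscr S^\dagger$ is closed in it.

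\emph{First step: $\mathcal C^\infty([0,1];\mathbb R^2)$ is Fréchet.} The family $\{\|\cdot\|_k\}$ is countable, so the topology is metrizable, for instance via
\[
d(\bm f,\bm g)=\sum_{k\ge0}2^{-k}\frac{\|\bm f-\bm g\|_k}{1+\|\bm f-\bm g\|_k}.
\]
The topology is Hausdorff because $\|\cdot\|_0$ is already a norm. For completeness, take a Cauchy sequence $(\bm f_j)$. For each $k$, the derivatives $\partial_u^k\bm f_j$ are uniformly Cauchy on $[0,1]$, hence converge uniformly to continuous limits $\bm g_k$. The classical theorem on uniform convergence of derivatives, applied through $\bm f_j(u)=\bm f_j(0)+\int_0^u\partial_u\bm f_j$, gives $\bm g_{k+1}=\partial_u\bm g_k$. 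Therefore $\bm g_0\in\mathcal C^\infty$ and $\|\bm f_j-\bm g_0\|_k\to0$ for every $k$.

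\emph{Second step: $\mathscr S^\dagger$ is a closed linear subspace.} Linearity is clear because the defining conditions are linear. For closedness, write $\mathscr S^\dagger=\bigcap_{m\ge0}\ker\Lambda_m$, where
\[
\Lambda_m\bm f=\Big(\partial_u((\mathcal A^\dagger)^m\bm f)(0)-K_L^\top((\mathcal A^\dagger)^m\bm f)(0),\ \partial_u((\mathcal A^\dagger)^m\bm f)(1)+K_R^\top((\mathcal A^\dagger)^m\bm f)(1)\Big)\in\mathbb R^4.
\]
Since $(\mathcal A^\dagger)^m=(\Delta\mathrm I_2+M^\top)^m$ is a linear differential operator of order $2m$ with constant matrix coefficients, we have
\[
|\Lambda_m\bm f|\le C_m\sum_{k\le 2m+1}\|\bm f\|_k .
\]
Hence each $\Lambda_m$ is continuous, each kernel is closed, and so is their intersection. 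A closed subspace of a Fréchet space, with the induced seminorms, is Fréchet, which proves the proposition.

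No step here is a real obstacle. The only point needing care is the completeness argument, specifically the identification $\bm g_{k+1}=\partial_u\bm g_k$ at the endpoints. Using one-sided derivatives on the closed interval $[0,1]$ together with the integral representation above handles this. Alternatively, one can simply cite the standard fact that $\mathcal C^\infty([0,1])$ with these seminorms is Fréchet.
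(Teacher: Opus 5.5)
Your proposal is correct and follows the paper's proof. Both first note that $\mathcal C^\infty([0,1];\mathbb R^2)$ is Fr\'echet, then exhibit $\mathscr S^\dagger$ as the intersection of the kernels of the continuous linear boundary maps $\bm f\mapsto \partial_u((\mathcal A^\dagger)^m\bm f)(0)-K_L^\top((\mathcal A^\dagger)^m\bm f)(0)$ and $\bm f\mapsto \partial_u((\mathcal A^\dagger)^m\bm f)(1)+K_R^\top((\mathcal A^\dagger)^m\bm f)(1)$. The only difference is that you prove completeness and the continuity bound explicitly, where the paper cites the standard fact for $\mathcal C^\infty([0,1])$ and passes to the product $\mathcal C^\infty([0,1];\mathbb R^2)$.
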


\begin{proof}
	It is well known that \( \mathcal C^\infty([0,1])\), endowed with the seminorms
	\[
	\|g\|_k
	=
	\sup_{u\in[0,1]}
	\left|g^{(k)}(u)\right|,
	\qquad k\in\mathbb N\cup\{0\}
	\]
	is a Fr\'echet space (\cite{ref8}). 
	Since $ \mathcal C^\infty([0,1];\mathbb R^2)
	\simeq
	\mathcal C^\infty([0,1])\times C^\infty([0,1]) $ and a finite product of Fr\'echet spaces is again a Fr\'echet space, it follows that
	\(\mathcal C^\infty([0,1];\mathbb R^2)\) is a Fr\'echet space.
	
	It remains to show that \(\mathscr S^\dagger \) is a closed subspace of
	\(\mathcal C^\infty([0,1];\mathbb R^2)\). For each \(m\ge0\), define the linear maps
	\[
	B_0^{(m)}(\bm f)
	:=
	\partial_u\left((\mathcal A^\dagger)^m\bm f\right)(0)
	-
	K_L^\top\left((\mathcal A^\dagger)^m\bm f\right)(0),
	\]
	and
	\[
	B_1^{(m)}(\bm f)
	:=
	\partial_u\left((\mathcal A^\dagger)^m\bm f\right)(1)
	+
	K_R^\top\left((\mathcal A^\dagger)^m\bm f\right)(1).
	\]
	Then
	\[
	\mathscr S^\dagger 
	=
	\bigcap_{m\ge0}
	\ker B_0^{(m)}
	\cap
	\ker B_1^{(m)}.
	\]
	Since point evaluations of smooth functions and their derivatives are continuous in
	the Fr\'echet topology of \(\mathcal C^\infty([0,1];\mathbb R^2)\), the maps
	\(B_0^{(m)}\) and \(B_1^{(m)}\) are continuous linear maps. Hence their kernels are
	closed. Therefore \(\mathscr S^\dagger \), being a countable intersection of closed subspaces,
	is closed in \(\mathcal C^\infty([0,1];\mathbb R^2)\). Consequently, \(\mathscr S^\dagger \) is a
	Fr\'echet space.
\end{proof}

We shall use the following form of Mitoma's criterion.

\begin{proposition}[Mitoma's criterion, \cite{ref6}]
	\label{p 7.2}
	Let \(\{X^n_t:\,0\le t\le T\}_{n\in \mathbb{N} }\) be a sequence of processes with values in
	\(\mathcal D([0,T],(\mathscr S^\dagger)')\). Then the sequence is tight in
	\(\mathcal D([0,T],(\mathscr S^\dagger)')\) if and only if, for every \(\bm f\in\mathscr S^\dagger \), the
	real-valued processes $ \left\{X^n_t(\bm f):\,0\le t\le T\right\}_{n \in \mathbb{N} } $
	are tight in \(\mathcal D([0,T], \mathbb R)\).
\end{proposition}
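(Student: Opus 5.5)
Mitoma's criterion is a known theorem of Mitoma (1983) on tightness in $\mathcal D([0,T],\Phi')$ for $\Phi$ a nuclear Fréchet space; the paper cites it. My plan is to reduce to that theorem. The one hypothesis that needs checking is that $\mathscr S^\dagger$ is a \emph{nuclear} Fréchet space. Proposition \ref{p7.1} already gives the Fréchet property, so the remaining work is nuclearity. Once $\mathscr S^\dagger$ is known to be nuclear and Fréchet, Mitoma's theorem applies verbatim and gives the equivalence. One direction is trivial in any case: the map $\bm Y\mapsto \bm Y(\bm f)$ from $\mathcal D([0,T],(\mathscr S^\dagger)')$ to $\mathcal D([0,T],\mathbb R)$ is continuous, and continuous images of tight families are tight.

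For nuclearity I would use the permanence properties of nuclear spaces. The space $\mathcal C^\infty([0,1])$ with the seminorms $\sup|g^{(k)}|$ is nuclear; this is classical, e.g.\ via Seeley's extension to $\mathcal C^\infty_c$ on a neighbourhood, or via its isomorphism with the space of rapidly decreasing sequences. Finite products of nuclear spaces are nuclear, so $\mathcal C^\infty([0,1];\mathbb R^2)$ is nuclear. Closed subspaces of nuclear spaces are nuclear, and the proof of Proposition \ref{p7.1} exhibited $\mathscr S^\dagger$ as the closed subspace $\bigcap_m \ker B_0^{(m)}\cap\ker B_1^{(m)}$. Moreover, the seminorms $\|\cdot\|_k$ on $\mathscr S^\dagger$ are exactly the restrictions of the ambient seminorms, so the subspace topology coincides with the topology used to define $(\mathscr S^\dagger)'$. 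Hence $\mathscr S^\dagger$ is a nuclear Fréchet space.

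With this in hand I would quote Mitoma's theorem: if $\Phi$ is nuclear Fréchet and, for every $\phi\in\Phi$, the laws of $\{X^n(\phi)\}$ are tight in $\mathcal D([0,T],\mathbb R)$, then $\{X^n\}$ is tight in $\mathcal D([0,T],\Phi')$ with the strong dual topology. The mechanism behind the theorem is worth recalling because it shows exactly where nuclearity enters. First, the Baire property of a Fréchet space converts pointwise tightness into uniform control by a single continuous Hilbertian seminorm. Then nuclearity yields a Hilbert–Schmidt embedding $\Phi_p\hookrightarrow\Phi_q$. This embedding makes balls of $\Phi_{-q}$ compact and delivers the compact containment condition.

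The main obstacle, conceptually, is the nuclearity step. It relies on the fact that $\mathscr S^\dagger$ inherits its topology from $\mathcal C^\infty([0,1];\mathbb R^2)$, and this is why Proposition \ref{p7.1} was proved first. Beyond that I would present the statement as a direct citation of \cite{ref6}, as the paper does, with the nuclearity verification added as justification.
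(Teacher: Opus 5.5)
Your proposal is correct and follows the paper's route. The paper proves in Proposition \ref{p7.1} that $\mathscr S^\dagger$ is a closed subspace of $\mathcal C^\infty([0,1];\mathbb R^2)$, hence Fr\'echet, and then simply cites Mitoma's theorem from \cite{ref6} without further argument. Your extra step, deducing nuclearity from the nuclearity of $\mathcal C^\infty([0,1])$ and its stability under finite products and closed subspaces, verifies a hypothesis of Mitoma's theorem that the paper never checks explicitly, so your version is the more complete one.
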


Thus it remains to prove the tightness of $ \left\{\bm Y_t^n(\bm f):\,0\le t\le T\right\}_{n\in \mathbb{N}} $ in \(\mathcal D([0,T],\mathbb R)\), for every fixed \(\bm f\in\mathscr S^\dagger\). We shall use Aldous' criterion.

\begin{proposition}[Aldous' criterion, \cite{ref7}]
	\label{p7.3}
	A sequence of real-valued processes $ \left\{X_t^n: 0\le t\le T\right\}_{n\in \mathbb{N}} $
	is tight in \(\mathcal D([0,T];\mathbb R)\) if the following two conditions hold:
	\begin{enumerate}
		\item[(i)] $ \lim_{M\to\infty}
		\limsup_{n\to\infty}
		\mathbb P_{\mu^n}^n
		\left(
		\sup_{0\le t\le T}|X_t^n|>M
		\right)
		=0 $.
		
		\item[(ii)]
		For every \(\varepsilon>0\),
		\[
		\lim_{\delta\to 0}
		\limsup_{n\to\infty}
		\sup_{0<\lambda\le\delta}
		\sup_{\tau\in\mathcal T_T}
		\mathbb P_{\mu^n}^n
		\left(
		|X_{\tau+\lambda}^n-X_\tau^n|>\varepsilon
		\right)
		=0,
		\]
		where \(\mathcal T_T\) denotes the family of stopping times bounded by \(T\).
	\end{enumerate}
\end{proposition}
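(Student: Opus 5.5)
The statement is the classical criterion of Aldous. I would prove it by reducing to the standard characterization of tightness in $\mathcal D([0,T];\mathbb R)$: a sequence $\{X^n\}$ is tight if and only if
\begin{enumerate}
\item[(a)] the suprema $\sup_{0\le t\le T}|X^n_t|$ are tight, and
\item[(b)] for every $\varepsilon>0$, $\lim_{\delta\to0}\limsup_{n}\mathbb P^n_{\mu^n}(w'(X^n,\delta)>\varepsilon)=0$.
\end{enumerate}
Here $w'$ is the Skorokhod modulus, defined as the infimum over partitions $0=t_0<\dots<t_r=T$ with mesh $>\delta$ of $\max_i\sup_{s,t\in[t_{i-1},t_i)}|X_s-X_t|$. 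Condition (a) is exactly hypothesis (i), so the whole work is to derive (b) from (ii).

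To control $w'$, I would build the natural partition out of stopping times. Fix $\varepsilon>0$ and set $\tau_0=0$ and
\[
\tau_{k+1}=\inf\{t>\tau_k:\ |X^n_t-X^n_{\tau_k}|>\varepsilon/2\}\wedge T .
\]
On $[\tau_k,\tau_{k+1})$ the oscillation is at most $\varepsilon$. Hence
\[
\{w'(X^n,\delta)>\varepsilon\}\subset\{\tau_K<T\}\cup\bigcup_{k<K}\{\tau_{k+1}-\tau_k\le\delta,\ \tau_k<T\}
\]
for any fixed $K$. This reduces (b) to two estimates:
\begin{enumerate}
\item[(1)] for each fixed $k$, $\mathbb P(\tau_{k+1}-\tau_k\le\delta,\ \tau_k<T)$ is small uniformly in large $n$ when $\delta$ is small;
\item[(2)] $\mathbb P(\tau_K<T)$ is small for $K$ large, uniformly in $n$.
\end{enumerate}
For (2), I would use $\mathbb P(\tau_K<T)\le e^T\,\mathbb E[e^{-\tau_K}]$. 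Splitting each gap according to whether it exceeds $\delta$, the estimate (1) gives $\mathbb E[e^{-(\tau_{k+1}-\tau_k)}\mid\mathcal F_{\tau_k}]\le q<1$ except on small-probability events. Iterating makes $\mathbb E[e^{-\tau_K}]$ geometrically small in $K$.

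The main obstacle is (1). Hypothesis (ii) controls $|X_{\tau+\lambda}-X_\tau|$ only for deterministic $\lambda$, whereas $\tau_{k+1}-\tau_k\le\delta$ concerns a supremum over a random time window. I would use Aldous' randomization trick. Write $\tau=\tau_k$ and $\sigma=\tau_{k+1}$, and take $\lambda$ uniform on $[0,2\delta]$, independent of everything. On $\{\sigma-\tau\le\delta,\ \tau<T\}$, right-continuity gives $|X_\sigma-X_\tau|\ge\varepsilon/2$. Consequently, for every $\lambda\in[\sigma-\tau,2\delta]$ (a set of Lebesgue measure at least $\delta$), either $|X_{\tau+\lambda}-X_\tau|\ge\varepsilon/4$ or $|X_{\sigma+(\tau+\lambda-\sigma)}-X_\sigma|\ge\varepsilon/4$, where $\tau+\lambda-\sigma\in[0,2\delta]$. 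By Fubini, $\mathbb P(\sigma-\tau\le\delta,\tau<T)$ is bounded by a constant times
\[
\sup_{0\le\lambda\le 2\delta}\mathbb P(|X_{\tau+\lambda}-X_\tau|\ge\varepsilon/4)+\sup_{0\le\mu\le 2\delta}\mathbb P(|X_{\sigma+\mu}-X_\sigma|\ge\varepsilon/4).
\]
The second term needs a little care, since the shift $\mu$ depends on $\lambda$ and $\sigma$. I would handle it by conditioning on $\sigma-\tau$ and exchanging the order of integration. Times exceeding $T$ are handled by stopping at $T$ (equivalently, extending $X$ constantly past $T$). Both terms then vanish by (ii) applied to the bounded stopping times $\tau$ and $\sigma$. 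This gives (1); combined with (2) and (i), it yields tightness.
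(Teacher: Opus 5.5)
The paper gives no proof of this statement; it only cites it. Your overall architecture is the standard one: Billingsley's $w'$-characterization, exceedance times $\tau_k$, and Aldous' randomization. The randomization step is correct whenever $\sigma$ is a genuine exceedance time; the substitution $\mu=\tau+\lambda-\sigma$ is pathwise, so no conditioning is needed.

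\textbf{Gap in step (1), at the right endpoint.} The event $\{\tau_{k+1}-\tau_k\le\delta,\ \tau_k<T\}$ includes the case where $\tau_{k+1}=T$ only because of the truncation $\wedge T$. In that case there is no exceedance, so the sentence ``right-continuity gives $|X_\sigma-X_\tau|\ge\varepsilon/2$'' fails, and claim (1) is in fact false.

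A concrete example: take the deterministic, uniformly Lipschitz paths $X^n_t=\frac{\varepsilon}{2T}(1+\frac1n)t$. They are tight and satisfy (i) and (ii). You get $\tau_1=\frac{n}{n+1}T$, $\tau_2=T$ and $|X^n_{\tau_2}-X^n_{\tau_1}|=\frac{\varepsilon}{2n}$. Hence $\mathbb P(\tau_2-\tau_1\le\delta,\ \tau_1<T)=1$ for all $n\ge T/\delta$, so the right-hand side of your inclusion is not small. The underlying problem is that your partition forces the last stopping-time interval $[\tau_m,T)$ to be longer than $\delta$, which the path need not allow.

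To repair it, do two things:
\begin{itemize}
\item Keep only short gaps that end in a genuine exceedance, $\{\tau_{k+1}-\tau_k\le 2\delta,\ \tau_{k+1}<T\}$.
\item If the last $\tau_m<T$ has $T-\tau_m\le\delta$ while $\tau_m-\tau_{m-1}>2\delta$, drop $\tau_m$ from the partition. The final interval $[\tau_{m-1},T)$ then has length $>2\delta$ and contains $T-2\delta$.
\end{itemize}
On $A=\{\sup_{t\in[T-2\delta,T)}|X_t-X_{T-2\delta}|\le\varepsilon/2\}$, the oscillation of $X$ on $[\tau_{m-1},T)$ is at most $2\varepsilon$. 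You control $\mathbb P(A^c)$ by your randomization argument applied to the deterministic time $T-2\delta$ and its first $\varepsilon/2$-exceedance time, which on $A^c$ is a genuine exceedance before $T$. This yields
$\{w'(X^n,\delta)>2\varepsilon\}\subset\{\tau_K<T\}\cup\bigcup_{k<K}\{\tau_{k+1}-\tau_k\le 2\delta,\ \tau_{k+1}<T\}\cup A^c$.

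\textbf{Step (2) does not close as sketched.} Off the exceptional sets you get at best $q\approx 1-c\delta_0$, so $q^K$ is small only for $K\gtrsim 1/\delta_0$. Meanwhile the exceptional probabilities are each only $o(1)$ as $\delta_0\to0$, with no rate. They accumulate to order $K\eta(\delta_0)\gtrsim\eta(\delta_0)/\delta_0$, which need not be small.

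Use an unconditional bound instead. On $\{\tau_K<T\}$, fewer than $T/\delta_0$ gaps exceed $\delta_0$. So at least $K-T/\delta_0$ gaps are short and end in genuine exceedances. Markov's inequality then gives $\mathbb P(\tau_K<T)\le K\eta(\delta_0)/(K-T/\delta_0)\le 2\eta(\delta_0)$ for $K\ge 2T/\delta_0$. Here $\eta(\delta_0)$ is the randomization bound, which is uniform in $k$. Alternatively, AM--GM gives $\mathbb E[e^{-\tau_K};\tau_K<T]\le\max_k\mathbb E[e^{-K(\tau_{k+1}-\tau_k)};\tau_{k+1}<T]\le\eta(\delta_0)+e^{-K\delta_0}$.

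Then choose $\delta_0$ first, then $K$, then $\delta\le\delta_0$ small enough that $K\eta(2\delta)$ and $\mathbb P(A^c)$ are small.
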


Fix \(\bm f=(f_1,f_2)^\top\in\mathscr S^\dagger \). By the martingale decomposition, it is
enough to prove tightness separately for the initial term, the martingale term, and
the integral term:
\[
\left\{\bm Y_0^n(\bm f)\right\}_{n \in \mathbb{N} }, ~ ~
\left\{
M_t^n(\bm f):\,0\le t\le T
\right\}_{n \in \mathbb{N}}, ~~
\left\{
\int_0^t
(\partial_s+L_n)\bm Y_s^n(\bm f)
\,\mathrm d s:\,0\le t\le T
\right\}_{n \in \mathbb{N} }.
\]
 
For the initial term, by definition,
\[
\bm Y_0^n(\bm f)
=
\frac1{\sqrt n}
\sum_{x=1}^{n-1}
\bar\eta_1^0(x)
f_1\left(\frac{x}{n}\right)
+
\frac1{\sqrt n}
\sum_{x=1}^{n-1}
\bar\eta_2^0(x)
f_2\left(\frac{x}{n}\right).
\]
A direct computation gives
\[
\begin{aligned}
	\mathbb E_{\mu^n}^n
	\left[
	\left(\bm Y_0^n(\bm f)\right)^2
	\right]
	&=
	\frac1n
	\sum_{k=1}^2
	\sum_{x=1}^{n-1}
	f_k\left(\frac{x}{n}\right)^2
	\chi\left(\rho_k^{n,0}(x)\right)       
	-
	\frac2n
	\sum_{x=1}^{n-1}
	f_1\left(\frac{x}{n}\right)
	f_2\left(\frac{x}{n}\right)
	\rho_1^{n,0}(x)\rho_2^{n,0}(x)          \\
	&\quad
	+
	\frac2n
	\sum_{k=1}^2
	\sum_{x<y}
	f_k\left(\frac{x}{n}\right)
	f_k\left(\frac{y}{n}\right)
	\varphi_{kk}^{n,0}(x,y)                  
	+
	\frac2n
	\sum_{x\ne y}
	f_1\left(\frac{x}{n}\right)
	f_2\left(\frac{y}{n}\right)
	\varphi_{12}^{n,0}(x,y).
\end{aligned}
\]
Here
\[
\chi(r)=r(1-r),
\]
and
\[
\varphi_{ij}^{n,0}(x,y)
=
\mathbb E_{\mu^n}^n
\left[
\left(\eta_i^0(x)-\rho_i^{n,0}(x)\right)
\left(\eta_j^0(y)-\rho_j^{n,0}(y)\right)
\right].
\]
By the assumptions on the initial profile and the initial two-point correlations, we have
\[
\sup_{n\ge1}
\mathbb E_{\mu^n}^n
\left[
\left(\bm Y_0^n(\bm f)\right)^2
\right]
\le C.
\]
Therefore the sequence \(\{\bm Y_0^n(\bm f)\}_{n\in \mathbb{N}}\) is tight.
 
For the martingale term,
by Proposition~\ref{p5.1}, the sequence $ \left\{
M_t^n(\bm f):\,0\le t\le T
\right\}_{n\in \mathbb{N} } $
is tight in \(\mathcal D([0,T],\mathbb R)\).

For the integral term,
since \(\bm f\in\mathscr S^\dagger \), the generator computation yields
\[
(\partial_s+L_n)\bm Y_s^n(\bm f)
=
\bm Y_s^n(\mathcal A^\dagger\bm f)
+
R_n^s(\bm f),
\]
where \(R_n^s(\bm f)\) collects the discretization errors and the boundary remainders.
By the adjoint Robin boundary conditions imposed in the definition of \(\mathscr S^\dagger \),
these boundary remainders cancel in the coupled field, and there exists a constant
\(C>0\), independent of \(n\), such that
\[
|R_n^s(\bm f)|
\le
\frac{C}{\sqrt n},
\qquad 0\le s\le T.
\]
Therefore,
\[
\begin{aligned}
	 \mathbb E_{\mu^n}^n
	\left[
	\sup_{0\le t\le T}
	\left(
	\int_0^t
	(\partial_s+L_n)\bm Y_s^n(\bm f)
	\,\mathrm d s
	\right)^2
	\right]                                       
	&\le
	T
	\int_0^T
	\mathbb E_{\mu^n}^n
	\left[
	\left(
	(\partial_s+L_n)\bm Y_s^n(\bm f)
	\right)^2
	\right]
	\,\mathrm d s                                \\
	&\le
	2T
	\int_0^T
	\mathbb E_{\mu^n}^n
	\left[
	\left(
	\bm Y_s^n(\mathcal A^\dagger\bm f)
	\right)^2
	\right]
	\,\mathrm d s
	+
	\frac{2CT^2}{n}.
\end{aligned}
\]

Let $ \bm g
=
\mathcal A^\dagger\bm f
=
(g_1,g_2)^\top $.
Then
\[
g_1
=
\Delta f_1-\gamma_1f_1+\gamma_1f_2,
\qquad
g_2
=
\Delta f_2+\gamma_2f_1-\gamma_2f_2.
\]
Since \(\bm f\in\mathscr S^\dagger \), we also have \(\bm g\in\mathscr S^\dagger\), and in particular
\(\bm g\) is smooth and bounded. 
 For this \(\bm g\),
 \[
 \bm Y_s^n(\bm g)
 =
 \frac1{\sqrt n}
 \sum_{x=1}^{n-1}
 \bar\eta_1^s(x)
 g_1\left(\frac{x}{n}\right)
 +
 \frac1{\sqrt n}
 \sum_{x=1}^{n-1}
 \bar\eta_2^s(x)
 g_2\left(\frac{x}{n}\right).
 \]
 As in the estimate of the initial term, we obtain
 \[
 \begin{aligned}
 	\mathbb E_{\mu^n}^n
 	\left[
 	\left(\bm Y_s^n(\bm g)\right)^2
 	\right]
 	&=
 	\frac1n
 	\sum_{k=1}^2
 	\sum_{x=1}^{n-1}
 	g_k\left(\frac{x}{n}\right)^2
 	\chi\left(\rho_k^{n,s}(x)\right)             
 	-
 	\frac2n
 	\sum_{x=1}^{n-1}
 	g_1\left(\frac{x}{n}\right)
 	g_2\left(\frac{x}{n}\right)
 	\rho_1^{n,s}(x)\rho_2^{n,s}(x)              \\
 	&\quad
 	+
 	\frac1n
 	\sum_{k=1}^2
 	\sum_{x\ne y}
 	g_k\left(\frac{x}{n}\right)
 	g_k\left(\frac{y}{n}\right)
 	\varphi_{kk}^{n,s}(x,y)                     
 	+
 	\frac2n
 	\sum_{x\ne y}
 	g_1\left(\frac{x}{n}\right)
 	g_2\left(\frac{y}{n}\right)
 	\varphi_{12}^{n,s}(x,y).
 \end{aligned}
 \]
By the two-point correlation estimate established later in
Proposition~\ref{p8.3}, we have
\[
\mathbb E_{\mu^n}^n
\left[
\sup_{0\le t\le T}
\left(
\int_0^t
(\partial_s+L_n)\bm Y_s^n(\bm f)
\,\mathrm d s
\right)^2
\right]
\le C.
\]
Therefore, the first condition of the Aldous' criterion holds.

We now verify Aldous' second condition. 
Let \(\tau\in\mathcal T_T\), \(0<\theta\le\delta\), and set
\[
\tau_\theta=(\tau+\theta)\wedge T.
\]
By the martingale decomposition, we have
\[
\bm Y_{\tau_\theta}^n(\bm f)-\bm Y_\tau^n(\bm f)
=
\bm M_{\tau_\theta}^n(\bm f)-\bm M_\tau^n(\bm f)
+
\int_\tau^{\tau_\theta}
(\partial_s+L_n)\bm Y_s^n(\bm f)
\,\mathrm d s .
\]
Therefore, by Chebyshev's inequality,
\[
\begin{aligned}
	\mathbb P_{\mu^n}^n
	\left(
	\left|
	\bm Y_{\tau_\theta}^n(\bm f)-\bm Y_\tau^n(\bm f)
	\right|>\varepsilon
	\right)
	&\le
	\frac{2}{\varepsilon^2}
	\mathbb E_{\mu^n}^n
	\left[
	\left(
	\bm M_{\tau_\theta}^n(\bm f)-\bm M_\tau^n(\bm f)
	\right)^2
	\right]  \\
	&\quad
	+
	\frac{2}{\varepsilon^2}
	\mathbb E_{\mu^n}^n
	\left[
	\left(
	\int_\tau^{\tau_\theta}
	(\partial_s+L_n)\bm Y_s^n(\bm f)
	\,\mathrm d s
	\right)^2
	\right].
\end{aligned}
\]
We estimate the two terms separately.
First, for the martingale part, 
\[
\mathbb E_{\mu^n}^n
\left[
\left(
\bm M_{\tau_\theta}^n(\bm f)-\bm M_\tau^n(\bm f)
\right)^2
\right]
=
\mathbb E_{\mu^n}^n
\left[
\left\langle \bm M^n(\bm f)\right\rangle_{\tau_\theta}
-
\left\langle \bm M^n(\bm f)\right\rangle_\tau
\right].
\]
Moreover,
\[
\left\langle\bm M^n(\bm f)\right\rangle_t
=
\int_0^t
\Gamma_n^s (\bm f,\bm f )
\,\mathrm d s.
\]
From the explicit expression of the carr\'e du champ, the boundedness of the occupation
variables, and the smoothness of \(\bm f\), there exists a constant
\(C>0\), independent of \(n\), such that
\[
0\le \Gamma_n^s (\bm f,\bm f)\le C,
\qquad 0\le s\le T.
\]
Hence
\[
\begin{aligned}
	\mathbb E_{\mu^n}^n
	\left[
	\left(
	\bm M_{\tau_\theta}^n(\bm f)-\bm M_\tau^n(\bm f)
	\right)^2
	\right]
	=
	\mathbb E_{\mu^n}^n
	\left[
	\int_\tau^{\tau_\theta}
	\Gamma_n^s (\bm f,\bm f)
	\,\mathrm d s
	\right]  
	\le
	C\theta
	\le
	C\delta.
\end{aligned}
\]
For the integral term, by the uniform second moment bound obtained above,
\begin{align*}
	\mathbb E_{\mu^n}^n
	\left[
	\left(
	\int_\tau^{\tau_\theta}
	(\partial_s+L_n)\bm Y_s^n(\bm f)
	\,\mathrm d s
	\right)^2
	\right] 
	\le
	\theta\,
	\mathbb E_{\mu^n}^n
	\left[
	\int_\tau^{\tau_\theta}
	\left|(\partial_s+L_n)\bm Y_s^n(\bm f)\right|^2
	\,\mathrm d s
	\right]  
	\le  C\delta.
\end{align*}
Combining the two estimates, we obtain
\[
\lim_{\delta\to 0}
\limsup_{n\to\infty}
\sup_{\tau\in\mathcal T_T}
\sup_{0<\theta\le\delta}
\mathbb P_{\mu^n}^n
\left(
\left|
\bm Y_{\tau_\theta}^n(\bm f)-\bm Y_\tau^n(\bm f)
\right|
>\varepsilon
\right)
=0.
\]
This proves Aldous' second condition.

Hence, for every \(\bm f\in\mathscr S^\dagger \), the sequence $ \left\{
\bm Y_t^n(\bm f):\,0\le t\le T
\right\}_{n\in \mathbb{N} } $
is tight in \(\mathcal D([0,T], \mathbb R)\). By Mitoma's criterion, it follows that $ \left\{
\bm Y_t^n:\,0\le t\le T
\right\}_{n\in \mathbb{N}} $ is tight in \(\mathcal D([0,T],(\mathscr S^\dagger)')\).

\section{Discrete equations and two-point correlation estimates}

In this section we derive the discrete equations satisfied by the two-point
correlation functions of the two-species system. These equations provide the key spatial correlation estimates and second-moment bounds required for the tightness argument.

\subsection{The discrete equation}

Let  
\[
\Sigma_n=\{1,\ldots,n-1\},
\qquad
V_n=\{(x,y):0<x<y<n\}.
\]
For \(x,y\in\Sigma_n\), \(x<y\), and \(t\in[0,T]\), define the two-point correlation
functions by
\[
\varphi_{ij}^{n,t}(x,y)
=
\mathbb E_{\mu^n}^n
\left[
\left(\eta_i^t(x)-\rho_i^{n,t}(x)\right)
\left(\eta_j^t(y)-\rho_j^{n,t}(y)\right)
\right],
\qquad i,j\in\{1,2\},
\]
where $ \rho_i^{n,t}(x) = \mathbb E_{\mu^n}^n\left[\eta_i^t(x)\right] $.
Equivalently,
\[
\varphi_{ij}^{n,t}(x,y)
=
\mathbb E_{\mu^n}^n
\left[
\eta_i^t(x)\eta_j^t(y)
\right]
-
\rho_i^{n,t}(x)\rho_j^{n,t}(y).
\]
We impose the boundary convention $\varphi_{ij}^{n,t}(x,y)=0  $ whenever \(x=0\) or \(y=n\). We also write
\[
\bm\varphi^{n,t}(x,y)
=
\left(
\varphi_{11}^{n,t}(x,y),
\varphi_{12}^{n,t}(x,y),
\varphi_{21}^{n,t}(x,y),
\varphi_{22}^{n,t}(x,y)
\right)^\top .
\]

We now introduce the discrete operator acting on the spatial variables. Let
\(\mathscr A_n\) be the generator of the nearest-neighbor random walk on \(V_n\),
acting on functions \(F:V_n\to\mathbb R\) by
\[
(\mathscr A_nF)(u)
=
\sum_{v\in V_n}
c_n(u,v)\bigl(F(v)-F(u)\bigr),
\qquad u\in V_n,
\]
where
\[
c_n(u,v)
=
\begin{cases}
	1, & \text{if } \|u-v\| =1,\quad u,v\in V_n,\\
	0, & \text{otherwise}.
\end{cases}
\]

The additional effect of stirring on the diagonal \(y=x+1\) is described by the matrix
\[
\mathbf P
=
\begin{pmatrix}
	0 & 0 & 0 & 0\\
	0 & -1 & 1 & 0\\
	0 & 1 & -1 & 0\\
	0 & 0 & 0 & 0
\end{pmatrix}.
\]
This term exchanges the labels in the components \(\varphi_{12}\) and
\(\varphi_{21}\), while leaving \(\varphi_{11}\) and \(\varphi_{22}\) unchanged.

The bulk conversion at the first coordinate \(x\) is represented by
\[
\mathbf R_x
= \mathbf 1_{\{2\le x\le n-2\}} \left( M \otimes I_2 \right) 
= \mathbf 1_{\{2\le x\le n-2\}}
\begin{pmatrix}
	-\gamma_1 & 0 & \gamma_2 & 0\\
	0 & -\gamma_1 & 0 & \gamma_2\\
	\gamma_1 & 0 & -\gamma_2 & 0\\
	0 & \gamma_1 & 0 & -\gamma_2
\end{pmatrix},
\]
and the bulk conversion at the second coordinate \(y\) is represented by
\[
\mathbf R_y
= \mathbf 1_{\{2\le y\le n-2\}} \left( I_2\otimes M \right)  
= \mathbf 1_{\{2\le y\le n-2\}}
\begin{pmatrix}
	-\gamma_1 & \gamma_2 & 0 & 0\\
	\gamma_1 & -\gamma_2 & 0 & 0\\
	0 & 0 & -\gamma_1 & \gamma_2\\
	0 & 0 & \gamma_1 & -\gamma_2
\end{pmatrix}.
\]

The left boundary contribution, which acts when \(x=1\), is given by
\[
\mathbf B_x^n
=
n\mathbf 1_{\{x=1\}}
\begin{pmatrix}
	-(1-\alpha_2) & 0 & -\alpha_1 & 0\\
	0 & -(1-\alpha_2) & 0 & -\alpha_1\\
	-\alpha_2 & 0 & -(1-\alpha_1) & 0\\
	0 & -\alpha_2 & 0 & -(1-\alpha_1)
\end{pmatrix}.
\]
Similarly, the right boundary contribution, which acts when \(y=n-1\), is
\[
\mathbf B_y^n
=
n\mathbf 1_{\{y=n-1\}}
\begin{pmatrix}
	-(1-\beta_2) & -\beta_1 & 0 & 0\\
	-\beta_2 & -(1-\beta_1) & 0 & 0\\
	0 & 0 & -(1-\beta_2) & -\beta_1\\
	0 & 0 & -\beta_2 & -(1-\beta_1)
\end{pmatrix}.
\]

Finally, the inhomogeneous source term is supported on the diagonal boundary
\(\{y=x+1\}\). Define
\[
\mathbf H^{n,t}(x,y)
=
\left(
H_{11}^{n,t}(x,y),
H_{12}^{n,t}(x,y),
H_{21}^{n,t}(x,y),
H_{22}^{n,t}(x,y)
\right)^\top,
\]
where
\[
H_{ij}^{n,t}(x,y)
=
-n^2
\left(
\rho_i^{n,t}(x+1)-\rho_i^{n,t}(x)
\right)
\left(
\rho_j^{n,t}(x+1)-\rho_j^{n,t}(x)
\right)
\mathbf 1_{\{y=x+1\}} .
\]

From the Kolmogorov forward equation, we can give the discrete equation for the two-point correlations.
\begin{proposition}\label{p8.1}
	For every \((x,y)\in V_n\), the vector-valued two-point correlation function
	\(\bm\varphi^{n,t}\) satisfies
	\begin{equation}\label{eq8.1}
		\begin{aligned}
			\partial_t\bm\varphi^{n,t}(x,y)
			&=
			n^2(\mathscr A_n \mathrm {I}_4 \bm\varphi^{n,t})(x,y)
			+
			n^2\mathbf P\bm\varphi^{n,t}(x,y)\mathbf 1_{\{y=x+1\}}  \\
			&\quad
			+
			\mathbf R_x\bm\varphi^{n,t}(x,y)
			+
			\mathbf R_y\bm\varphi^{n,t}(x,y)   
			+
			\mathbf B_x^n\bm\varphi^{n,t}(x,y)
			+
			\mathbf B_y^n\bm\varphi^{n,t}(x,y)
			+
			\mathbf H^{n,t}(x,y).
		\end{aligned}
	\end{equation}
\end{proposition}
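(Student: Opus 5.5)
The plan is to compute $\partial_t \mathbb E[\eta_i^x\eta_j^y]$ via Kolmogorov's forward equation, $\frac{\mathrm d}{\mathrm dt}\mathbb E^n_{\mu^n}[F(\bm\eta_t)]=\mathbb E^n_{\mu^n}[L_nF(\bm\eta_t)]$ with $F=\eta_i^x\eta_j^y$, and then subtract $\partial_t(\rho_i^{n,t}(x)\rho_j^{n,t}(y))$, expanded with the one-point equations \eqref{eq2.4}. Fix $(x,y)\in V_n$ and treat the four parts $n^2L_n^s$, $L_n^c$, $n^2L_n^-$, $n^2L_n^+$ one at a time. By linearity, each part acting on the product splits into a term where it acts on $\eta_i^x$ with $\eta_j^y$ frozen, a term where it acts on $\eta_j^y$ with $\eta_i^x$ frozen, and, only when both sites are touched by the same transition, a genuinely joint term. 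For the product we then only need $L_n\eta_i^x$, which is linear in $\eta$ by the computation behind \eqref{eq2.4}.

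\textbf{Separated case $y\ge x+2$.} No single transition touches both sites. For the stirring part, $n^2L_n^s\eta_i^x$ is the discrete Laplacian (with reflecting/truncated structure at the ends), so $n^2L_n^s(\eta_i^x\eta_j^y)=\eta_j^y\,n^2\Delta\eta_i^x+\eta_i^x\,n^2\Delta\eta_j^y$. Subtracting the product of means gives $n^2$ times the nearest-neighbour random-walk generator applied to $\varphi_{ij}$ in the $x$ and $y$ directions. When $x=1$ the missing neighbour $x-1=0$ contributes through the convention $\varphi=0$. We check that the $n^2[\rho(2)-\rho(1)]$ form in \eqref{eq2.4} matches $\mathscr A_n$ at $x=1$ together with the ghost value, i.e. that the Laplacian representation at $x=1$ with $\rho(0)=\alpha$ is compensated by the boundary term.

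For the conversion part, $L_n^c\eta_i^x=-\gamma_i\eta_i^x+\gamma_{3-i}\eta_{3-i}^x$ for $2\le x\le n-2$. This yields $(M\otimes I_2)\bm\varphi$ in the first index, i.e. $\mathbf R_x$, and similarly $\mathbf R_y$.

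For the left boundary, $nL_n^-\eta_i^1=\alpha_i\eta_0^1-\alpha_0\eta_i^1$. Substituting $\eta_0=1-\eta_1-\eta_2$ gives $\alpha_i-(1-\alpha_{3-i})\eta_i^1-\alpha_i\eta_{3-i}^1$. The constant cancels in the covariance, and the linear part produces exactly the matrix $\mathbf B_x^n$ with the factor $n$ (from $n^2\cdot\frac1n$). The right boundary is symmetric and gives $\mathbf B_y^n$. In this case there are no other contributions.

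\textbf{Diagonal case $y=x+1$.} Here the stirring across the bond $(x,x+1)$ acts jointly. Compute directly
\[
L^{x,x+1}(\eta_i^x\eta_j^{x+1}) = \eta_j^x\eta_i^{x+1}-\eta_i^x\eta_j^{x+1},
\]
since the exchange swaps the two occupation vectors. Compare this with what the separated formula would assign: the random-walk generator on $V_n$ omits the moves to $(x+1,x+1)$ and $(x,x)$. Decompose the true value as \emph{(walk term restricted to $V_n$)} $+$ \emph{(correction)}, where the correction reads
\[
\mathbb E[\eta_j^x\eta_i^{x+1}]-\mathbb E[\eta_i^x\eta_j^{x+1}] .
\]
After subtracting means, this becomes $\varphi_{ji}(x,x+1)-\varphi_{ij}(x,x+1)$, which is $\mathbf P\bm\varphi$; it vanishes for $i=j$ and swaps $12\leftrightarrow21$ otherwise. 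The leftover deterministic part is $\rho_j(x)\rho_i(x+1)-\rho_i(x)\rho_j(x+1)$, combined with the mismatch between the Laplacian terms in $\partial_t(\rho_i\rho_j)$ (which include the neighbours $(x,x)$, $(x+1,x+1)$) and the truncated walk. Algebraically this collapses to $-n^2(\rho_i(x+1)-\rho_i(x))(\rho_j(x+1)-\rho_j(x))$, i.e. $\mathbf H^{n,t}$. Conversion and boundary terms remain separate even on the diagonal, because they are single-site transitions.

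\textbf{Main obstacle.} The delicate point is the bookkeeping on the diagonal together with the edge cases $x=1$ or $y=n-1$. One must verify that the $\rho$ mismatch produces exactly $\mathbf H$ with no extra $\mathbf P$-type residue, using the exclusion constraint $\eta_i^x\eta_j^x=\delta_{ij}\eta_i^x$ carefully. One must also confirm that the boundary Laplacian conventions ($\rho(0)=\alpha$ versus $\varphi(0,\cdot)=0$) are consistent with the $n[\alpha_k\rho_0-\alpha_0\rho_k]$ boundary terms of \eqref{eq2.4}.
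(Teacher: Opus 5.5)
Your overall route is the computation the paper has in mind; the paper itself offers nothing beyond ``from the Kolmogorov forward equation''. You apply the forward equation to $\eta_i^x\eta_j^y$, use the product rule for every transition that touches only one of the two sites, isolate the joint term on the bond $\{x,x+1\}$, and subtract $\partial_t(\rho_i^{n,t}(x)\rho_j^{n,t}(y))$ using \eqref{eq2.4}. Your diagonal bookkeeping is also correct. The swap gives $\varphi_{ji}-\varphi_{ij}$, i.e. $\mathbf P\bm\varphi^{n,t}$, and the deterministic remainder is $-n^2(\rho_i(x+1)-\rho_i(x))(\rho_j(x+1)-\rho_j(x))$. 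You never need the exclusion identity $\eta_i^x\eta_j^x=\delta_{ij}\eta_i^x$, because every product, including the swapped one, stays at two distinct sites.

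The step that fails is your treatment of the ends $x=1$ and $y=n-1$ in the stirring part.

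\textbf{No ghost site.} $L_n^s$ only has bonds $\{x,x+1\}$ with $1\le x\le n-2$, so $n^2L_n^s\eta_i^1=n^2(\eta_i^2-\eta_i^1)$. Correspondingly, $\mathscr A_n$ sums only over $v\in V_n$, so it is the reflecting walk. The convention $\varphi(0,\cdot)=\varphi(\cdot,n)=0$ plays no role in \eqref{eq8.1}.

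\textbf{What goes wrong with your version.} If the missing neighbour $(0,y)$ contributes with value $0$, as you write, the $x$-direction term becomes $n^2(\varphi_{ij}(2,y)-2\varphi_{ij}(1,y))$. That is an extra $-n^2\varphi_{ij}^{n,t}(1,y)$, and nothing compensates it:
\begin{itemize}
\item in \eqref{eq2.4} the order-$n^2$ part at $x=1$ is $n^2[\rho(2)-\rho(1)]$, with no $\rho(0)$ appearing;
\item the reservoir acts only at rate $n^2\cdot\frac1n=n$, producing exactly $\mathbf B_x^n$ and nothing of order $n^2$.
\end{itemize}
The mechanism ``ghost Laplacian with $\rho(0)=\alpha$ compensated by the boundary term'' belongs to the Dirichlet regime with $O(1)$ reservoir rates. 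It does not apply in this critical slow regime, so the consistency check you propose would come out false.

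\textbf{The fix.} Use the reflecting (truncated) structure consistently, as your own parenthetical remark already suggests. Then the separated case at $x=1$ or $y=n-1$ matches $\mathscr A_n$ directly. At the corners $(1,2)$ and $(n-2,n-1)$, the same algebra with the truncated one-point Laplacian again leaves exactly $\mathbf H^{n,t}$.
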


 \subsection{The nine-component discrete equation}
 
 The four-component correlation system derived above is not convenient for the
 correlation estimates. Indeed, the boundary matrices in the four-component system
 contain negative off-diagonal entries, and therefore cannot be regarded as
 Markov generators. To recover a closed Markovian structure, we enlarge the state space from four to nine components by including the vacancy as an additional state.
 Let $ E=\{0,1,2\} $, where \(0\) denotes a vacant site, while \(1\) and \(2\) denote the two particle species. For \(a,b\in E\), \(0<x<y<n\), and \(t\in[0,T]\), define
 \[
 \varphi_{ab}^{n,t}(x,y)
 =
 \mathbb E_{\mu^n}^n
 \left[
 \eta_a^t(x)\eta_b^t(y)
 \right]
 -
 \rho_a^{n,t}(x)\rho_b^{n,t}(y).
 \]
 We impose the boundary convention $ \varphi_{ab}^{n,t}(x,y)=0 $ whenever \(x=0\) or \(y=n\). We write
 \[
 \begin{aligned}
 	\bm\varphi^{n,t}(x,y)
 	=
 	\big(
 	&\varphi_{00}^{n,t}(x,y),
 	\varphi_{01}^{n,t}(x,y),
 	\varphi_{02}^{n,t}(x,y),
 	\varphi_{10}^{n,t}(x,y),
 	\varphi_{11}^{n,t}(x,y),  \\
 	&\varphi_{12}^{n,t}(x,y),
 	\varphi_{20}^{n,t}(x,y),
 	\varphi_{21}^{n,t}(x,y),
 	\varphi_{22}^{n,t}(x,y)
 	\big)^\top .
 \end{aligned}
 \]
Since
\[
\sum_{k=0}^2\rho_k^{n,t}(x)=1,
\qquad
\sum_{k=0}^2\eta_k^x(t)=1, \quad \forall x \in \Sigma_{n},
\]
we have, for \(i=0, 1,2\),
\[
\varphi_{0i}^{n,t}(x,y)
=
-\varphi_{1i}^{n,t}(x,y)-\varphi_{2i}^{n,t}(x,y), \qquad 
\varphi_{i0}^{n,t}(x,y)
=
-\varphi_{i1}^{n,t}(x,y)-\varphi_{i2}^{n,t}(x,y).
\]
More generally,
\[
\bm\varphi^{n,t}(x,y)\in\mathcal H,
\]
where
\[
\mathcal H
=
\left\{
v=(v_{ab})_{a,b\in E}\in\mathbb R^9:
\sum_{a=0}^2v_{ab}=0\ \text{for every } b\in E,\quad
\sum_{b=0}^2v_{ab}=0\ \text{for every } a\in E
\right\}.
\]

 We now introduce the matrices entering the nine-component equation. The diagonal
 exchange matrix is
 \[
 \mathsf P
 =
 \begin{pmatrix}
 	0&0&0&0&0&0&0&0&0\\
 	0&-1&0&1&0&0&0&0&0\\
 	0&0&-1&0&0&0&1&0&0\\
 	0&1&0&-1&0&0&0&0&0\\
 	0&0&0&0&0&0&0&0&0\\
 	0&0&0&0&0&-1&0&1&0\\
 	0&0&1&0&0&0&-1&0&0\\
 	0&0&0&0&0&1&0&-1&0\\
 	0&0&0&0&0&0&0&0&0
 \end{pmatrix}.
 \]
 This matrix appears only on the discrete diagonal \(y=x+1\), and represents the
 exchange of the two labels at neighboring sites.
 
 The bulk conversion matrix is
 \[
 Q
 =
 \begin{pmatrix}
 	0&0&0\\
 	0&-\gamma_1&\gamma_2\\
 	0&\gamma_1&-\gamma_2
 \end{pmatrix}.
 \]
 Thus, conversion at the first coordinate \(x\) is represented by $ Q\otimes \mathrm I_3 $, whereas conversion at the second coordinate \(y\) is represented by $ \mathrm I_3\otimes Q $.
 
The left and right boundary matrices are respectively
\[
R^\alpha
=
\begin{pmatrix}
	-(\alpha_1+\alpha_2) & \alpha_0 & \alpha_0\\
	\alpha_1 & -\alpha_0 & 0\\
	\alpha_2 & 0 & -\alpha_0
\end{pmatrix},
\qquad
\alpha_0=1-\alpha_1-\alpha_2,
\]
and
\[
R^\beta
=
\begin{pmatrix}
	-(\beta_1+\beta_2) & \beta_0 & \beta_0\\
	\beta_1 & -\beta_0 & 0\\
	\beta_2 & 0 & -\beta_0
\end{pmatrix},
\qquad
\beta_0=1-\beta_1-\beta_2.
\]
Accordingly, the boundary action at the left coordinate is described by $ R^\alpha\otimes \mathrm I_3 $, and the boundary action at the right coordinate by $ \mathrm I_3\otimes R^\beta  $.

We now state the nine-component correlation equation. 
\begin{proposition}[Nine-component two-point correlation equation]
	\label{p8.2}
	For every \((x,y)\in V_n\), the nine-component correlation function
	\(\bm\varphi^{n,t}(x,y)\) satisfies
	\begin{equation}\label{eq8.2}
		\partial_t\bm\varphi^{n,t}(x,y)
		=
		\mathcal L_n^\ast\bm\varphi^{n,t}(x,y)
		+
		\bm G^{n,t}(x,y),
	\end{equation}
	where
	\begin{align}\label{eq8.3}
		\mathcal L_n^\ast
		={}&
		n^2\mathscr A_n \mathrm I_9
		+
		n^2\mathbf 1_{\{y=x+1\}}\mathsf P
		+
		\mathbf 1_{\{2\le x\le n-2\}}(Q\otimes \mathrm I_3)
		\nonumber\\
		&+
		\mathbf 1_{\{2\le y\le n-2\}}(\mathrm I_3\otimes Q)
		+
		n\mathbf 1_{\{x=1\}}(R^\alpha\otimes \mathrm I_3)
		+
		n\mathbf 1_{\{y=n-1\}}(\mathrm I_3\otimes R^\beta).
	\end{align}
	The source term \(\bm G^{n,t}\) is supported on the discrete diagonal $ D_n=\{(x,y)\in V_n:\ y=x+1\} $ and for \(a,b\in E\),
	\begin{equation}\label{eq8.4}
		G_{ab}^{n,t}(x,y)
		=
		-n^2
		\left(
		\rho_a^{n,t}(x+1)-\rho_a^{n,t}(x)
		\right)
		\left(
		\rho_b^{n,t}(x+1)-\rho_b^{n,t}(x)
		\right)
		\mathbf 1_{\{y=x+1\}}.
	\end{equation}
\end{proposition}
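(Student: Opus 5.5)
The plan is to apply Kolmogorov's forward equation to the two-site monomials and then subtract the evolution of the product of one-point functions. For $a,b\in E$ and $(x,y)\in V_n$, Dynkin's formula gives
\[
\partial_t\mathbb E_{\mu^n}^n\big[\eta_a^t(x)\eta_b^t(y)\big]=\mathbb E_{\mu^n}^n\big[L_n(\eta_a(x)\eta_b(y))(\bm\eta^t)\big],
\]
and $\partial_t\big(\rho_a^{n,t}(x)\rho_b^{n,t}(y)\big)$ is computed from \eqref{eq2.4}. For $a=0$ we use the analogous equation for $\rho_0^{n,t}$, obtained by summing \eqref{eq2.4} over $k=1,2$ and using $\rho_0=1-\rho_1-\rho_2$. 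So the first step is to record the single-site actions of the four generator pieces, each written as a \emph{linear} map acting on the vector $(\eta_c(z))_{c\in E}$:
\begin{itemize}
\item[(i)] Stirring: $L_n^s\eta_a(z)=\eta_a(z+1)+\eta_a(z-1)-2\eta_a(z)$ in the bulk, with the missing neighbour term simply absent at $z=1$ and $z=n-1$.
\item[(ii)] Conversion: $L_n^c\eta_a(z)=\mathbf 1_{\{2\le z\le n-2\}}\sum_c Q_{ac}\eta_c(z)$.
\item[(iii)] Left boundary: $L_n^-\eta_a(1)=n^{-1}\sum_c R^\alpha_{ac}\eta_c(1)$. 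For instance, for $a=0$ this reads $\alpha_0(\eta_1+\eta_2)-(\alpha_1+\alpha_2)\eta_0$. This map is linear with no constant term, because $\sum_c\eta_c=1$ lets the constant be absorbed.
\item[(iv)] Right boundary: the same form with $R^\beta$ at $z=n-1$.
\end{itemize}

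The second step treats the product. Whenever two transitions do not touch both $x$ and $y$ simultaneously, the generator acts by the Leibniz rule, $L(fg)=gLf+fLg$. This covers conversion and the boundary pieces always, since they are single-site moves and $x<y$. It also covers stirring except on the bond $(x,x+1)$ when $y=x+1$. In the Leibniz cases, linearity gives
\[
\mathbb E[\eta_c(x')\eta_b(y)]-\rho_c(x')\rho_b(y)=\varphi_{cb}(x',y),
\]
and the $\rho\rho$ parts cancel exactly against $\partial_t(\rho_a\rho_b)$ obtained from \eqref{eq2.4}. This produces the blocks $Q\otimes\mathrm I_3$, $\mathrm I_3\otimes Q$, $nR^\alpha\otimes\mathrm I_3$, $n\,\mathrm I_3\otimes R^\beta$, and $n^2\mathscr A_n$ off the diagonal. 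Here one checks that moves $(x,y)\to(x\pm1,y)$ or $(x,y\pm1)$ leaving $V_n$ correspond precisely to absent stirring neighbours. This is consistent with the convention $\varphi=0$ at $x=0$ or $y=n$, and with the fact that \eqref{eq2.4} at the endpoints uses only the interior neighbour.

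The main obstacle is the bookkeeping on the diagonal $D_n$. For $y=x+1$, the bond $(x,x+1)$ contributes $\eta_a(x+1)\eta_b(x)-\eta_a(x)\eta_b(x+1)$ instead of the Leibniz expression. Its expectation equals
\[
\varphi_{ba}(x,x+1)-\varphi_{ab}(x,x+1)+\rho_a(x+1)\rho_b(x)-\rho_a(x)\rho_b(x+1).
\]
The first difference is exactly the $n^2\mathsf P$ term, which swaps the index pair $(a,b)\mapsto(b,a)$. Meanwhile, $\partial_t(\rho_a\rho_b)$ still contains the full discrete-Laplacian contributions along that bond, $\rho_b(x+1)[\rho_a(x+1)-\rho_a(x)]+\rho_a(x)[\rho_b(x)-\rho_b(x+1)]$, which involve the forbidden diagonal points $(x+1,x+1)$ and $(x,x)$. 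The random walk $\mathscr A_n$ on $V_n$ has no such moves, so these terms are not matched by correlations. Collecting the leftover $\rho$-terms and multiplying by $n^2$, I expect them to factor as
\[
-n^2\big(\rho_a(x+1)-\rho_a(x)\big)\big(\rho_b(x+1)-\rho_b(x)\big),
\]
which is $G_{ab}^{n,t}$ in \eqref{eq8.4}. I would verify this factorization by a careful algebraic check, and separately confirm the edge cases $x=1$ and $y=n-1$ on the diagonal, where the boundary and missing-neighbour terms must not interfere with the exchange term.
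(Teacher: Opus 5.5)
Your proposal is correct and is the computation the paper has in mind: the paper gives no written proof and presents the equation as a direct consequence of the Kolmogorov forward equation, and your argument (Leibniz rule for every move not touching both $x$ and $y$, plus a separate treatment of the bond $(x,x+1)$ on $D_n$ producing the $n^2\mathsf P$ term and the source) is exactly that derivation. The algebra you defer does close, since the leftover on the diagonal is $\rho_a(x+1)\rho_b(x)-\rho_a(x)\rho_b(x+1)-\rho_b(x+1)\bigl(\rho_a(x+1)-\rho_a(x)\bigr)-\rho_a(x)\bigl(\rho_b(x)-\rho_b(x+1)\bigr)=-\bigl(\rho_a(x+1)-\rho_a(x)\bigr)\bigl(\rho_b(x+1)-\rho_b(x)\bigr)$; note also that the convention $\varphi=0$ at $x=0$ or $y=n$ never enters, because $\mathscr A_n$ makes no jumps out of $V_n$ (the walk is reflected at $x=1$, at $y=n-1$ and at the diagonal).
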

 
 \begin{remark}
 	Since $ \sum_{a=0}^2 \left( \rho_a^{n,t}(x+1)-\rho_a^{n,t}(x) \right) =0 $, we have
 	\[
 	\sum_{a=0}^2G_{ab}^{n,t}(x,y)=0,
 	\qquad
 	\sum_{b=0}^2G_{ab}^{n,t}(x,y)=0.
 	\]
 	Hence $ \bm G^{n,t}(x,y)\in\mathcal H $.
 Moreover, the subspace \(\mathcal H\) is invariant under the homogeneous evolution
 generated by \(\mathcal L_n^\ast\). Indeed, the spatial operator \(\mathscr A_n\)
 acts componentwise and therefore preserves the defining constraints of
 \(\mathcal H\). In addition, a direct verification from the explicit matrices shows
 that each matrix appearing in \(\mathcal L_n^\ast\) maps \(\mathcal H\) into
 itself. Hence \(\mathcal L_n^\ast\) maps \(\mathcal H\)-valued functions into
 \(\mathcal H\)-valued functions. 
 Since $ \bm\varphi^{n,0}(x,y)\in\mathcal H $ and $ \bm G^{n,t}(x,y)\in\mathcal H $, Duhamel's formula implies that
 \[
 \bm\varphi^{n,t}(x,y)\in\mathcal H,
 \qquad t\ge0 .
 \]
 \end{remark}
 
 Since the homogeneous operator \(\mathcal L_n^*\) has nonnegative off-diagonal entries and zero column sums, it can be identified with the Kolmogorov forward operator of a finite-state
continuous-time Markov chain on $ \mathcal E_n=V_n\times E^2 $.
Equivalently, $ \mathcal L_n=(\mathcal L_n^\ast)^\top $ is the backward generator acting on test functions \(F:\mathcal E_n\to\mathbb R\),
namely
\[
(\mathcal L_nF)(z)
=
\sum_{z'\neq z}
c_n(z,z')\bigl(F(z')-F(z)\bigr).
\]
The jump rates are as follows.

First, the spatial motion is given by nearest-neighbor jumps in \(V_n\). If
\((r,s)\in V_n\) and $ \|(r,s)-(x,y)\| =1$,
then
\[
((x,y),(a,b))
\longrightarrow
((r,s),(a,b))
\]
occurs at rate \(n^2\).

 Second, if \(y=x+1\), the diagonal color exchange is
 \[
 ((x,x+1),(a,b))
 \longrightarrow
 ((x,x+1),(b,a))
 \]
 at rate \(n^2\).
 
 Third, if \(2\le x\le n-2\), the bulk conversion at the first coordinate is
 \[
 ((x,y),(1,b))
 \longrightarrow
 ((x,y),(2,b))
 \]
 at rate \(\gamma_1\), and
 \[
 ((x,y),(2,b))
 \longrightarrow
 ((x,y),(1,b))
 \]
 at rate \(\gamma_2\). The conversion at the second coordinate \(y\) is defined in the
 same way.

 Fourth, if \(x=1\), the left boundary transitions are
 \[
 ((1,y),(0,b))
 \longrightarrow
 ((1,y),(1,b))
 \]
 at rate \(n\alpha_1\),
 \[
 ((1,y),(0,b))
 \longrightarrow
 ((1,y),(2,b))
 \]
 at rate \(n\alpha_2\),
 \[
 ((1,y),(1,b))
 \longrightarrow
 ((1,y),(0,b))
 \]
 at rate \(n\alpha_0\), and
 \[
 ((1,y),(2,b))
 \longrightarrow
 ((1,y),(0,b))
 \]
 at rate \(n\alpha_0\).
 
 Fifth, if \(y=n-1\), the right boundary transitions are
 \[
 ((x,n-1),(a,0))
 \longrightarrow
 ((x,n-1),(a,1))
 \]
 at rate \(n\beta_1\),
 \[
 ((x,n-1),(a,0))
 \longrightarrow
 ((x,n-1),(a,2))
 \]
 at rate \(n\beta_2\),
 \[
 ((x,n-1),(a,1))
 \longrightarrow
 ((x,n-1),(a,0))
 \]
 at rate \(n\beta_0\), and
 \[
 ((x,n-1),(a,2))
 \longrightarrow
 ((x,n-1),(a,0))
 \]
 at rate \(n\beta_0\).

 We denote by $P_t^n(z,z')
 =
 \mathbb P_z(Z_t^n=z') $
 the transition probability of this Markov chain.

\subsection{Two-point correlation estimate}

\begin{proposition}\label{p8.3}
     There exists a constant \(C>0\) such that
	\[
	\sup_{  t\ge 0}
	\max_{1\le x<y\le n-1}
	|\varphi_{ij}^{n,t}(x,y)|
	\le
	\frac{C}{n},
	\qquad i,j=1,2.
	\]
\end{proposition}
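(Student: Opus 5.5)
We will work with the nine-component system of Proposition~\ref{p8.2}, whose homogeneous part $\mathcal L_n^\ast$ is the forward operator of the Markov chain $Z^n_t$ on $\mathcal E_n=V_n\times E^2$ with transition probabilities $P^n_t$. Duhamel's formula gives, for $z=((x,y),(a,b))$,
\[
\varphi_{ab}^{n,t}(x,y)=\sum_{z'}P^n_t(z',z)\,\varphi^{n,0}(z')+\int_0^t\sum_{z'}P^n_{t-s}(z',z)\,G^{n,s}(z')\,\mathrm ds ,
\]
equivalently, in backward form, $\varphi^{n,t}(z)=\mathbb E_z\bigl[\varphi^{n,0}(Z^n_t)\bigr]+\int_0^t\mathbb E_z\bigl[G^{n,t-s}(Z^n_s)\bigr]\mathrm ds$ after transposition. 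Positivity and the Markov structure are exactly what the nine-component enlargement was designed to recover. The plan is to bound the two terms separately and uniformly in $t$. Since the components with $i,j\in\{1,2\}$ are among the nine, this suffices.

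For the initial term we first bound $|\varphi^{n,0}_{ab}|\le C/n$ for all $a,b\in E$. For $a,b\in\{1,2\}$ this is Assumption~\ref{ass4}, and for $a=0$ or $b=0$ it follows from the $\mathcal H$ relations $\varphi_{0i}=-\varphi_{1i}-\varphi_{2i}$, and similarly for $\varphi_{i0}$. Since $P^n_t$ is stochastic, the initial term is bounded by $C/n$ for all $t$.

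For the source term we need $|G^{n,s}(x,x+1)|\le C$ uniformly in $s,x,n$. This amounts to a discrete gradient bound $\max_x|\rho^{n,s}_a(x+1)-\rho^{n,s}_a(x)|\le C/n$. We plan to obtain it from the linear system \eqref{eq2.4}. Its gradient satisfies a similar discrete heat equation with a bounded reaction term; the boundary contribution is of order $1/n$ thanks to the slow $n^{-1}\cdot n^2=n$ boundary rates and the Robin structure. Assumption~\ref{ass3} gives the initial gradient bound, and a discrete maximum principle or energy argument, combined with the exponential decay of Lemma~\ref{l3.1} at the discrete level, should propagate it uniformly in time. Given this, the source term is bounded by
\[
C\int_0^\infty \mathbb P_z\bigl(Z^n_s\in D_n\times E^2\bigr)\,\mathrm ds ,
\]
that is, by $C$ times the expected total occupation time of the diagonal by the spatial projection of $Z^n$.

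The main obstacle is showing that this occupation time is $O(1/n)$ uniformly in the starting point. The spatial component of $Z^n$ does not move autonomously of the colors only through the color exchange, and the spatial motion is the symmetric walk on $V_n$ at rate $n^2$, which is independent of colors, since the boundary and conversion jumps only change colors. However, the chain never dies: $\mathcal L_n^\ast$ has zero column sums, so the time horizon is infinite and we cannot simply kill at the boundary. We will instead exploit the $\mathcal H$ structure. The source $G^{n,s}$ has zero marginals, so only the projection onto the non-stationary color modes matters. The color dynamics at $x=1$ or $y=n-1$ mix at rate $n$ toward the reservoir law, and in the bulk via conversion at rate $O(1)$. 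We therefore aim for an estimate $\bigl|\sum_{z'}P^n_s(z',z)v(z')\bigr|\le Ce^{-\kappa s}\|v\|_\infty$ for $\mathcal H$-valued data $v$, with $\kappa$ uniform in $n$. This is a discrete analogue of Lemma~\ref{l3.1}: each coordinate reaches the boundary in time $O(1)$, where it is refreshed with probability bounded below, giving a coupling of color marginals.

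Combining the two, we split $\int_0^t=\int_0^{t-1}+\int_{t-1}^t$. On the last unit time interval we use the standard local time estimate, which says the expected time a rate-$n^2$ walk on $V_n$ spends on the diagonal during a unit time interval is $O(1/n)$, since the diagonal has about $n$ sites out of $n^2$ and the walk is reflected. On earlier times we use the exponential decay of zero-mean modes together with the same diagonal occupation bound on each unit interval. Summing the geometric series gives $C/n$ uniformly in $t$, which proves Proposition~\ref{p8.3}. If the uniform-in-$n$ spectral gap proves delicate, a fallback is to use $U=\varphi_{1\cdot}+\varphi_{2\cdot}$ and $V=\gamma_1\varphi_{1\cdot}-\gamma_2\varphi_{2\cdot}$ type combinations, mirroring the proof of Lemma~\ref{l3.1}, to reduce to scalar one-species estimates as in \cite{ref3}.
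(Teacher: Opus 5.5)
Your outline is correct, but it handles the infinite time horizon differently from the paper.

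\textbf{The paper's route.} It uses the domination \eqref{eq8.6}: on $\mathcal H$-valued data, $\|(S^n_tH)(x,y)\|_1$ is bounded by the semigroup of a scalar walk on $V_n$, killed at rate of order $n$ on $\{x=1\}\cup\{y=n-1\}$, applied to $\|H\|_1$. It then quotes from \cite{ref2} an infinite-horizon bound $O(n)$, in unit-rate walk time, for the diagonal occupation of that killed walk. After the $n^{-2}$ time change this gives $C/n$ in one step.

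\textbf{Your route.} You use positivity only on a unit macroscopic time window, where even the unkilled reflected walk spends time $O(1/n)$ on $D_n$. All long-time control is moved into a uniform-in-$n$ exponential decay of the semigroup on $\mathcal H$-valued data, and you sum a geometric series. This is more self-contained: it in effect reproves the cited occupation-time lemma from a local-time bound plus a spectral gap, and it gives exponential forgetting of the initial correlations as a by-product. The paper's route is shorter given the citation.

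\textbf{The shared ingredient.} Both routes rest on the same fact, and your ``refreshed with probability bounded below'' coupling is only a heuristic for it. The boundary color moves act at rate $n$ during many visits of length $O(n^{-2})$, interleaved with exchanges and conversions, so a literal coupling is awkward. The clean argument is the one behind \eqref{eq8.6}. For $w\in\mathcal H_0$ one has $(R^\alpha w)_0=-w_0$, hence $\frac{\mathrm d}{\mathrm ds}\|e^{sR^\alpha}w\|_1\le-\alpha_0\|e^{sR^\alpha}w\|_1$. Since $e^{s\mathsf P}$ and $e^{sQ}$ are $\ell^1$-contractions and the spatial kernel is positive, $u\mapsto\|(S^n_tH)(u)\|_1$ is dominated by the scalar walk killed at rate $n\alpha_0$ (resp.\ $n\beta_0$) on the boundary layers. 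The uniform spectral gap of that killed walk is exactly your decay estimate. For the gradient bound no decay is needed: Proposition~\ref{p8.4} obtains it from a maximum principle for the rescaled gradient with $O(1)$ Dirichlet data.

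\textbf{Two small corrections.}
\begin{itemize}
\item Your ``backward form'' $\mathbb E_z[\varphi^{n,0}(Z^n_t)]$ is not equivalent to the forward Duhamel formula, because $\mathcal L_n^\ast$ is not symmetric.
\item Stochasticity of $P^n_t$ does not control $\sum_{z'}P^n_t(z',z)$. What does control it is that the spatial projection is an autonomous symmetric walk, which gives $\sum_{z'}P^n_t(z',z)\le 9$. That is all you need for the initial term and for the unit-window bound.
\end{itemize}
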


\begin{proof}
	Since \(\mathcal L_n^\ast\) generates the forward evolution of the nine-component
	system, we denote by \((S_t^n)_{t\ge0}\) the corresponding semigroup. By Duhamel's
	formula,
	\begin{equation}\label{eq8.5}
		\bm\varphi^{n,t}
		=
		S_t^n\bm\varphi^{n,0}
		+
		\int_0^t
		S_{t-s}^n\bm G^{n,s}\,\mathrm d s.
	\end{equation}
	We first establish a domination estimate for \(S_t^n\). For
	\(v=(v_{ab})_{a,b\in E}\in\mathbb R^9\), set $ \|v\|_1=\sum_{a,b\in E}|v_{ab}| $.
	The diagonal exchange matrix can be written as $ \mathsf P=\Pi- \mathrm I_9 $, where \(\Pi= \mathsf P+ \mathrm I_9\) satisfies $ \Pi^2 = \mathrm I_9 $.
 	Hence, for \(s\ge0\),
	\[
	e^{s\mathsf P}
	=
	e^{-s}e^{s\Pi}
	= 
	e^{-s}
	\left(
	\sum_{k=0}^\infty \frac{s^{2k}}{(2k)!}I
	+
	\sum_{k=0}^\infty \frac{s^{2k+1}}{(2k+1)!}\Pi
	\right)
	=\frac{1+e^{-2s}}2I_9
	+
	\frac{1-e^{-2s}}2\Pi.
	\]
	Since \(\Pi\) is an isometry for the \(\ell^1\)-norm,
	\[
	\|e^{s\mathsf P}v\|_1\le \|v\|_1, \qquad \|e^{sn^2\mathsf P}v\|_1\le \|v\|_1.
	\]
	
	Next, the matrix \(Q\) has nonnegative off-diagonal entries and zero column sums.
	Therefore \(e^{sQ}\) is nonnegative and has column sums equal to one. Hence
	\[
	\|e^{sQ}w\|_1
	\le
	\sum_{a=0}^2\sum_{b=0}^2(e^{sQ})_{ab}|w_b|
	=
	\sum_{b=0}^2|w_b|
	=
	\|w\|_1.
	\]
	Consequently,
	\[
	\|e^{s(Q\otimes \mathrm I_3)}v\|_1\le \|v\|_1,
	\qquad
	\|e^{s(\mathrm I_3\otimes Q)}v\|_1\le \|v\|_1.
	\]
	
	It remains to consider the boundary matrices. Since \(R^\alpha\) and \(R^\beta\) have
	nonnegative off-diagonal entries and zero column sums, they generate finite-state
	Markov semigroups in the forward convention. On the zero-sum subspace
	\[
	\mathcal H_0
	=
	\left\{
	w=(w_0,w_1,w_2)\in\mathbb R^3:
	\sum_{a=0}^2w_a=0
	\right\},
	\]
	the eigenvalue \(0\) is removed, and the remaining spectrum lies strictly in the left
	half-plane. Hence there exist constants \(\kappa_\alpha,\kappa_\beta>0\) such that
	\[
	\|e^{sR^\alpha}w\|_1
	\le
	e^{-\kappa_\alpha s}\|w\|_1,
	\qquad
	\|e^{sR^\beta}w\|_1
	\le
	e^{-\kappa_\beta s}\|w\|_1,
	\qquad
	w\in\mathcal H_0.
	\]
	Therefore, on \(\mathcal H\),
	\[
	\|e^{s(R^\alpha\otimes I_3)}v\|_1
	\le
	 e^{-\kappa_\alpha s}\|v\|_1, \qquad 
	\|e^{s(I_3\otimes R^\beta)}v\|_1
	\le
	 e^{-\kappa_\beta s}\|v\|_1.
	\]
	 
	Using the Trotter product formula, the \(\ell^1\)-contraction of the interior color
	dynamics, and the strict dissipation of the boundary color dynamics on the zero-sum
	subspace, we obtain the following domination estimate. There exists \(C>0\) such that,
	for every \(\mathcal H\)-valued function \(H:V_n\to\mathcal H\),
	\begin{equation}\label{eq8.6}
		\|(S_t^nH)(x,y)\|_1
		\le
		C
		\left(
		e^{tn^2\widehat A_n}\|H\|_1
		\right)(x,y),
		\qquad (x,y)\in V_n.
	\end{equation}
	Here \(\widehat A_n\) is the scalar generator of a nearest-neighbor walk on
	\(V_n\cup\partial V_n\) with absorbing boundary rates. More precisely, for
	\(h:V_n\cup\partial V_n\to\mathbb R\),
	\[
	(\widehat A_nh)(u)
	=
	\sum_{v\in V_n\cup\partial V_n}
	\widehat c_n(u,v)\big(h(v)-h(u)\big),
	\qquad u\in V_n,
	\]
	where
	\[
	\widehat c_n(u,v)
	=
	\begin{cases}
		1, & \|u-v\|_1=1,\quad u,v\in V_n,\\[0.1cm]
		\dfrac{\lambda}{n}, & \|u-v\|_1=1,\quad u\in V_n,\ v\in\partial V_n,\\[0.2cm]
		0, & \text{otherwise},
	\end{cases}
	\]
	with $ \lambda=\min\{\kappa_\alpha,\kappa_\beta\} $.
	
	We now estimate two terms in \eqref{eq8.5}. First, by the assumption on
	the initial correlations and the identities defining the zero-sum extension, the full
	nine-component initial field satisfies
	\[
	\|\bm\varphi^{n,0}\|_\infty\le \frac{C}{n}.
	\]
	The domination estimate gives
	\[
	\|S_t^n\bm\varphi^{n,0}\|_\infty
	\le
	C\|\bm\varphi^{n,0}\|_\infty
	\le
	\frac{C}{n}.
	\]
	
	Next, by the one-point density gradient estimate, 
	we have
	\[
	\|\bm G^{n,t}(x,y)\|_1
	\le
	C\mathbf 1_{\{y=x+1\}}.
	\]
	Indeed, by Proposition \ref{p8.4}, each discrete gradient
	\[
	\rho_a^{n,t}(x+1)-\rho_a^{n,t}(x)
	\]
	is \(O(n^{-1})\), and the prefactor \(n^2\) in
	\eqref{eq8.4} gives an \(O(1)\) bound on the diagonal.
	
	Combining this with \eqref{eq8.6}, we obtain
	\[
	\begin{aligned}
		\left\|
		\int_0^t
		S_{t-s}^n\bm G^{n,s}\,\mathrm d s
		\right\|_1
		&\le
		C\int_0^t
		\left(
		e^{(t-s)n^2\widehat A_n}\mathbf 1_{D_n}
		\right)(x,y)\,\mathrm d s      \\
		&=
		\frac{C}{n^2}
		\int_0^{tn^2}
		\mathbb P_{(x,y)}(X_r\in D_n)\,\mathrm d r,
	\end{aligned}
	\]
	where \(X_r\) is the scalar walk generated by \(\widehat A_n\). By the diagonal
	occupation-time estimate for the corresponding one-species problem (\cite[Lemma 3.2]{ref2}),
	\[
	\sup_{(x,y)\in V_n}
	\mathbb E_{(x,y)}
	\left[
	\int_0^\infty
	\mathbf 1_{\{X_r\in D_n\}}\,\mathrm d r
	\right]
	\le
	Cn.
	\]
	Therefore,
	\[
	\sup_{  t\ge 0}
	\max_{(x,y)\in V_n}
	\left\|
	\int_0^t
	S_{t-s}^n\bm G^{n,s}\,\mathrm d s
	\right\|_1
	\le
	\frac{C}{n}.
	\]
	Together with the estimate of the initial term, Duhamel's formula gives
	\[
	\sup_{ t\ge 0}
	\max_{(x,y)\in V_n}
	\|\bm\varphi^{n,t}(x,y)\|_1
	\le
	\frac{C}{n}.
	\]
	In particular,
	\[
	\sup_{ t\ge 0}
	\max_{1\le x<y\le n-1}
	|\varphi_{ij}^{n,t}(x,y)|
	\le
	\frac{C}{n},
	\qquad i,j=1,2.
	\]
	This proves the proposition.

\end{proof}

\subsection{One-point density gradient estimate}

We now prove the one-point density gradient estimate used above.

\begin{proposition} 
	\label{p8.4}
	There exists a constant \(C>0\) such that, for \(i=1,2\) and all $ x \in \{ 1, \cdots, n-2 \} $,
	\[
	\left|
	\rho_i^{n,t}(x+1)-\rho_i^{n,t}(x)
	\right|
	\le
	\frac{C}{n}
	\]
	uniformly in $ t \ge 0 $.
	Consequently, the same estimate holds for \(i=0\).
\end{proposition}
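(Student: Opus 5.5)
The plan is to decouple the system as in the proof of Lemma~\ref{l3.1} and then bound the discrete gradients by a maximum-principle argument. Rather than working with $\rho_1^{n,t},\rho_2^{n,t}$ directly, I would set
\[
U^{n,t}(x)=\rho_1^{n,t}(x)+\rho_2^{n,t}(x),\qquad V^{n,t}(x)=\gamma_1\rho_1^{n,t}(x)-\gamma_2\rho_2^{n,t}(x),\qquad x\in\Sigma_n .
\]
Since $\rho_1=(V+\gamma_2U)/\gamma$ and $\rho_2=(\gamma_1U-V)/\gamma$ with $\gamma=\gamma_1+\gamma_2$, gradient bounds for $U,V$ give the claim for $i=1,2$. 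The case $i=0$ follows from $\rho_0=1-U$.

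\textbf{Step 1: the equations for $U$ and $V$.} From \eqref{eq2.4} I would read off the following. In the bulk $x\in\Sigma_n^\circ$,
\[
\partial_tU=\Delta_nU,\qquad \partial_tV=\Delta_nV-\gamma V,
\]
where $\Delta_n$ is the discrete Laplacian. At the boundary sites there is no conversion term, and summing the reservoir terms gives
\[
\partial_tU(1)=n^2(U(2)-U(1))+n\bigl[(1-\alpha_0)-U(1)\bigr],\qquad
\partial_tV(1)=n^2(V(2)-V(1))+n\bigl[q_0(1-U(1))-\alpha_0V(1)\bigr],
\]
with $q_0=\gamma_1\alpha_1-\gamma_2\alpha_2$. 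The site $n-1$ is symmetric, with $\beta$ in place of $\alpha$.

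\textbf{Step 2: the gradient equation.} Define $W(x)=n(F(x+1)-F(x))$ for $x=1,\dots,n-2$, where $F\in\{U,V\}$. By Assumption~\ref{ass3}, $\|W(0)\|_\infty\le C$. Differencing the equations gives:
\begin{itemize}
\item For $2\le x\le n-3$: $\partial_tW(x)=n^2(W(x+1)+W(x-1)-2W(x))-\gamma' W(x)$, with $\gamma'\in\{0,\gamma\}$.
\item At $x=1$: using $n^2(F(2)-F(1))=nW(1)$,
\[
\partial_tW(1)=n^2(W(2)-W(1))-n^2W(1)-n^2 b_L(t)+E_L(t),
\]
where $b_L$ is the bracket in the boundary term (so $|b_L|\le C$, because all densities lie in $[0,1]$), and $E_L$ collects the conversion term present at site $2$ but not at site $1$. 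This gives $|E_L|\le n\gamma\,|V(2)|\le Cn$.
\item At $x=n-2$: the analogous identity holds.
\end{itemize}

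\textbf{Step 3: maximum principle.} Fix $K>\max\{\|W(0)\|_\infty,\,2\sup|b_L|+1,\,2\sup|b_R|+1\}$. I claim that $\max_x|W(t,x)|\le K$ for all $t\ge0$.

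Suppose instead that $t^*$ is the first time $|W|$ reaches $K$, at a site $x^*$, and that $W(t^*,x^*)=K$ (the case $-K$ is symmetric). Since $W$ is $C^1$ in $t$ on a finite set, it suffices to show $\partial_tW(t^*,x^*)<0$.
\begin{itemize}
\item If $x^*$ is interior, the discrete Laplacian term is $\le0$ and the damping term is $\le0$. To get strict negativity I would replace $K$ by $K+\epsilon t$, or equivalently use the standard perturbation $e^{-t}$.
\item If $x^*=1$, then $n^2(W(2)-W(1))\le0$. The remaining terms satisfy $-n^2K+n^2|b_L|+Cn<0$ for $n$ large, by the choice of $K$.
\end{itemize}
Since the bound on $K$ is independent of $t$ and $n$, this gives the uniformity in $t\ge0$ claimed in the statement.

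\textbf{Main obstacle.} The delicate point is the boundary step. One must check that the $O(n)$ error $E_L$ created by the mismatch between the conversion region $\Sigma_n^\circ$ and the full lattice is dominated by the effective killing $-n^2W(1)$. That killing comes from the slow-reservoir Robin term, and it is why the decoupled variables are used: the boundary coupling of $V$ to $U$ enters only through the bounded quantity $b_L$.

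If the pointwise maximum-principle argument turned out awkward because of non-differentiability of $\max_x$, I would instead use the Duhamel representation with the killed random walk on $\{1,\dots,n-2\}$. Killing at rate $n^2$ at the endpoint sites makes the expected occupation time there $O(n^{-2})$, so sources of size $n^2$ produce $O(1)$ contributions.
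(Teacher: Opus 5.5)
Your proof is correct and takes essentially the same route as the paper. In both, the rescaled gradient $n(\rho(x+1)-\rho(x))$ solves a discrete heat equation on $\{1,\dots,n-2\}$ whose effective ghost boundary value is $O(1)$ because the reservoirs act at rate $n$, and the bound is closed by a maximum principle; your $-b_L+E_L/n^2$ corresponds to the paper's $\bm D_t^n(0)=-R^\alpha\bm\rho_t^n(1)+\frac1n Q\bm\rho_t^n(1)$, with the $O(n)$ conversion mismatch split slightly differently. The only real difference is how you handle the reaction coupling: you diagonalize $M$ through its left eigenvectors $(1,1)$ and $(\gamma_1,-\gamma_2)$ and run a scalar barrier argument, whereas the paper keeps the three-colour vector, uses the $\ell^1$-contractivity of the stochastic semigroup $e^{tQ}$, and bounds the boundary sources via Duhamel's formula and the explicit Green's functions $w_0,w_{n-1}\le 1$, which is exactly your fallback.
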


\begin{proof}
	We work with the three-component density vector $ \bm\rho_t^n(x)
	=
	\bigl(
	\rho_0^{n,t}(x),
	\rho_1^{n,t}(x),
	\rho_2^{n,t}(x)
	\bigr)^\top  $. It satisfies the discrete linear system
	\begin{equation}\label{eq8.7}
		\begin{cases}
			\partial_t\bm\rho_t^n(x)
			=
			n^2\left(
			\bm\rho_t^n(x+1)+\bm\rho_t^n(x-1)-2\bm\rho_t^n(x)
			\right)
			+
			Q\bm\rho_t^n(x),
			& 2\le x\le n-2,\\[0.15cm]
			\partial_t\bm\rho_t^n(1)
			=
			n^2\left(
			\bm\rho_t^n(2)-\bm\rho_t^n(1)
			\right)
			+
			nR^\alpha\bm\rho_t^n(1),
			& t\ge0,\\[0.15cm]
			\partial_t\bm\rho_t^n(n-1)
			=
			n^2\left(
			\bm\rho_t^n(n-2)-\bm\rho_t^n(n-1)
			\right)
			+
			nR^\beta\bm\rho_t^n(n-1),
			& t\ge0,
		\end{cases}
	\end{equation}
	where $ \bm\alpha=(\alpha_0,\alpha_1,\alpha_2)^\top $, $ \bm\beta=(\beta_0,\beta_1,\beta_2)^\top $ and $ Q, R^\alpha, R^\beta$ are as defined in Section 8.2.
	For \(1\le x\le n-2\), define the rescaled discrete gradient
	\[
	\bm D_t^n(x)
	=
	n\bigl(\bm\rho_t^n(x+1)-\bm\rho_t^n(x)\bigr).
	\]
	We also introduce the boundary gradients
	\[
	\bm D_t^n(0)
	=
	-R^\alpha\bm\rho_t^n(1)
	+
	\frac1n Q\bm\rho_t^n(1),
	\]
	and
	\[
	\bm D_t^n(n-1)
	=
	R^\beta\bm\rho_t^n(n-1)
	-
	\frac1n Q\bm\rho_t^n(n-1).
	\]
	Since \(\bm\rho_t^n(x) \in [0,1]^3 \) for every \(x\) and
	\(t\), and since \(Q,R^\alpha,R^\beta\) are fixed matrices, there exists a
	constant \(C>0\), independent of \(n\) and \(t\), such that
	\[
	\sup_{t\ge0}
	\bigl(
	\|\bm D_t^n(0)\|_1 +\|\bm D_t^n(n-1)\|_1
	\bigr)
	\le C .
	\]
	
	With this convention, equation \eqref{eq8.7} can be rewritten as
	\[
	\partial_t\bm\rho_t^n(x)
	=
	n\bigl(\bm D_t^n(x)- \bm D_t^n(x-1)\bigr)
	+
	Q\bm\rho_t^n(x),
	\qquad 1\le x\le n-1.
	\]
	The definitions of \(\bm D_t^n(0)\) and \( \bm D_t^n(n-1)\) encode the two boundary
	equations. Therefore, for \(1\le x\le n-2\),
	\[
	\partial_t \bm D_t^n(x)
	=
	n^2\bigl(\bm D_t^n(x+1)+\bm D_t^n(x-1) -2\bm D_t^n(x)\bigr)
	+
	Q \bm D_t^n(x),
	\]
	where \(\bm D_t^n(0)\) and \(\bm D_t^n(n-1)\) are regarded as prescribed boundary
	values.
	
	Let $ E_n=\{1,\cdots,n-2\} $.
	Denote by \(L_n^{\mathrm D}\) the Dirichlet discrete Laplacian on \( E_n\),
	\[
	(L_n^{\mathrm D}h)(x)
	=
	n^2\bigl(h(x+1)-2h(x)+h(x-1)\bigr),
	\qquad x\in E_n,
	\]
	with zero boundary $ h(0)=h(n-1)=0 $.
	Then, on \( E_n\), the equation for \( \bm D_t^n\) can be rewritten as
	\[
	\begin{aligned}
		\partial_t \bm D_t^n
		=
		(L_n^{\mathrm D}+Q) \bm D_t^n 
		+
		n^2 \bm D_t^n(0)\mathbf 1_{\{x=1\}}
		+
		n^2 \bm D_t^n(n-1)\mathbf 1_{\{x=n-2\}} .
	\end{aligned}
	\]
	By the variation of constants formula,
	\begin{align}\label{eq8.8}
		\bm D_t^n
		 =
		e^{t(L_n^{\mathrm D}+Q)} \bm D_0^n 
		+
		\int_0^t
		e^{(t-s)(L_n^{\mathrm D}+Q)}
		\left[
		n^2 \bm D_s^n(0)\mathbf 1_{\{x=1\}}
		+
		n^2 \bm D_s^n(n-1)\mathbf 1_{\{x=n-2\}}
		\right]\,\mathrm ds .
	\end{align}
	
	Since \(L_n^{\mathrm D}\) acts on the spatial variable and \(Q\) acts on the
	color variable, the two operators commute. Hence
	\[
	e^{t(L_n^{\mathrm D}+Q)}
	=
	e^{tL_n^{\mathrm D}}e^{tQ}.
	\]
	As already shown in the proof of Proposition \ref{p8.3}, we have
	\[
	\|e^{tQ}v\|_1\le \|v\|_1,
	\qquad v\in\mathbb R^3,\quad t\ge0.
	\]
	The scalar semigroup \(e^{tL_n^{\mathrm D}}\) is positive and sub-Markovian.
	Therefore, for every vector-valued function \(H:E_n\to\mathbb R^3\),
	\[
	\left\|
	\left(e^{t(L_n^{\mathrm D}+Q)}H\right)(x)
	\right\|_1
	\le
	\left(e^{tL_n^{\mathrm D}}\|H\|_1\right)(x)
	\le \max_{y\in E_n}\|H(y)\|_1.
	\]
	Applying this estimate to \eqref{eq8.8} gives
	\[
	\begin{aligned}
		\|\bm D_t^n(x)\|_1
		&\le
		\max_{y\in I_n}\|\bm D_0^n(y)\|_1 
		+
		\int_0^t
		n^2
		\left(e^{(t-s)L_n^{\mathrm D}}\mathbf 1_{\{1\}}\right)(x)
		\|\bm D_s^n(0)\|_1\,\mathrm ds \\
		&\quad
		+
		\int_0^t
		n^2
		\left(e^{(t-s)L_n^{\mathrm D}}\mathbf 1_{\{n-2\}}\right)(x)
		\|\bm D_s^n(n-1)\|_1\,\mathrm ds .
	\end{aligned}
	\]
	It remains to control the two boundary source terms. Define
	\[
	w_0(x)
	=
	\int_0^\infty
	n^2
	\left(e^{rL_n^{\mathrm D}}\mathbf 1_{\{1\}}\right)(x)\,\mathrm dr,
	\qquad x\in E_n.
	\]
	Then \(w_0\) solves the discrete elliptic problem
	\[
	-L_n^{\mathrm D}w_0
	=
	n^2\mathbf 1_{\{1\}},
	\qquad x\in E_n,
	\]
	with zero Dirichlet boundary $ w_0(0)=w_0(n-1)=0 $.
	Equivalently,
	\[
	-\bigl(w_0(x+1)+w_0(x-1)-2w_0(x)\bigr)
	=
	\mathbf 1_{\{x=1\}},
	\qquad x\in E_n.
	\]
	With the Dirichlet convention $ w_0(0)=w_0(n-1)=0 $, the solution on $ E_n $ is given by $ w_0(x)=1-\frac{x}{n-1} $.
	Thus
	\[
	0\le w_0(x)\le 1,
	\qquad x\in E_n.
	\]
	Similarly,
	\[
	w_{n-1}(x)
	:=
	\int_0^\infty
	n^2
	\left(e^{rL_n^{\mathrm D}}\mathbf 1_{\{n-2\}}\right)(x)\,\mathrm dr
	=
	\frac{x}{n-1},
	\qquad x\in E_n,
	\]
	with boundary convention \(w_{n-1}(0)=w_{n-1}(n-1)=0\), and hence
	\[
	0\le w_{n-1}(x)\le1,
	\qquad x\in E_n.
	\]
	Consequently,
	\[
	\int_0^t
	n^2
	\left(e^{(t-s)L_n^{\mathrm D}}\mathbf 1_{\{1\}}\right)(x)\, \mathrm ds
	\le
	w_0(x)
	\le1,
	\]
	and
	\[
	\int_0^t
	n^2
	\left(e^{(t-s)L_n^{\mathrm D}}\mathbf 1_{\{n-2\}}\right)(x)\,\mathrm ds
	\le
	w_{n-1}(x)
	\le1.
	\]
	Therefore,
	\[
	\|\bm D_t^n(x)\|_1
	\le
	\max_{y\in E_n}\|\bm D_0^n(y)\|_1
	+
	\sup_{t\ge0}\|\bm D_t^n(0)\|_1
	+
	\sup_{t\ge0}\|\bm D_t^n(n-1)\|_1 .
	\]
	Taking the maximum over \(x\in E_n\), we obtain
	\[
	\max_{1\le x\le n-2}\|\bm D_t^n(x)\|_1
	\le
	\max_{1\le y\le n-2}\|\bm D_0^n(y)\|_1
	+
	\sup_{t\ge0}\|\bm D_t^n(0)\|_1
	+
	\sup_{t\ge0}\|\bm D_t^n(n-1)\|_1 .
	\]
	By the initial discrete assumption \ref{ass3},
	\[
	\max_{1\le y\le n-2}\|\bm D_0^n(y)\|_1
	=
	n
	\max_{1\le y\le n-2}
	\|\bm\rho_0^n(y+1)-\bm\rho_0^n(y)\|_1
	\le C .
	\]
	Together with the uniform boundedness of the boundary gradients, this
	implies
	\[
	\sup_{t\ge0}
	\max_{1\le x\le n-2}\|\bm D_t^n(x)\|_1
	\le C .
	\]
	Finally, by the definition of \( \bm D_t^n\),
	\[
	\sup_{t\ge0}
	\max_{1\le x\le n-2}
	\|\bm\rho_t^n(x+1)-\bm\rho_t^n(x)\|_1
	\le
	\frac{C}{n}.
	\]
	In particular, for each component \(i=0,1,2\),
	\[
	\sup_{t\ge0}
	\max_{1\le x\le n-2}
	|\rho_i^{n,t}(x+1)-\rho_i^{n,t}(x)|
	\le
	\frac{C}{n}.
	\]
	This completes the proof.
\end{proof}

\bibliographystyle{plain}
\bibliography{ref} 

\end{document}